\newtheorem{definition}{Definition}[section]
\newtheorem{theorem}[definition]{Theorem}
\newtheorem{lemma}[definition]{Lemma}
\newtheorem{proposition}[definition]{Proposition}
\newtheorem{remark}[definition]{Remark}
\numberwithin{equation}{section}
\title{ The Geometric Approach to the Classification of Signals via a Maximal Set of Signals.}
\author{Leon A. Luxemburg, \thanks{Department of Mathematics, Texas A\& M University at Galveston, Texas, U.S.A.: luxembul@tamug.edu}, 
and Steven B. Damelin \thanks{Department of Mathematics, University of Michigan, 530 Church Street, Ann Arbor, MI 48109, USA: damelin@umich.edu}}
\begin{document}
\maketitle


\begin{abstract}
In this paper we study the scale-space classification of signals via the maximal set of kernels. We use a geometric approach which arises naturally when we consider parameter variations in scale-space. We derive the Fourier transform formulas for quick and efficient computation of zero-crossings and the corresponding classifying trees. General theory of convergence for convolutions is developed, and practically useful properties of scale-space classification are derived as a consequence.We also give a complete topological description of level curves for convolutions of signals with the maximal set of kernels. We use these results to develop a bifurcation theory for the curves under the parameter changes. This approach leads to a novel set of integer invariants for arbitrary signals..
\end{abstract}


\section{Introduction}
  \label{sec1}
Scale-space filtering is a very useful method for classification, recognition and structural feature extraction of waveforms. It is based on a convolution $\phi(x,\rho)$ of a given signal $f(x)$ with a kernel $g(x,\rho)$
\begin{equation}
  \label{eq:1.1}
  \phi(x,\rho) = f(x)*g(x,\rho) \equiv \int_{-\infty}^\infty f(x-\nu)g(\nu,\rho)\,d\nu\, .
\end{equation}
After the function $\phi(x,\rho)$ is defined one can construct a Witkin tree $TW(f)$ corresponding to the zero-crossing curves $\partial\phi(x,\rho)=0$. See [1,2]. In [3] we also defined a procedure for constructing a topologically invariant tree   $TT(f)$. In [3] we constructed a maximal family $M$ of kernels $g(x,\rho)$ such that the construction of Witkin trees leads to a meaningful shape invariant classification of signals. Before this result, the only kernels used for scale-space filtering were Gaussian kernels $g(x,\rho)={y\over\sqrt{2\,\!\pi}}e^{-(x\rho)^2/2}$. The set of kernels $M$ consists of kernels
\begin{equation}
  \label{eq:1.2}
%
  \Lambda_{\alpha\beta p}(x,\rho)=\alpha \rho^{p+1}\Lambda_p^o(x\rho)+\beta \rho^{p+1}\Lambda_p^e(x\rho)\, ,
\end{equation}
 where $\alpha$, $\beta$ and $p$ are real numbers, $\Lambda_p^o(z)$ and $\Lambda_p^e(z)$ are odd and even functions defined by
 \begin{eqnarray}
  \Lambda_p^e(z) &=& {1\over\sqrt{2\,\!\pi}}\sum_{n=0}^\infty\left[ {z^{2n}\over (2n)!}(-1)^n\prod_{k=0}^{n-1}(p+2k+1)\right] \label{eq:1.3} \\
   \Lambda_p^o(z) &=& {1\over\sqrt{2\,\!\pi}}\sum_{n=0}^\infty\left[ {z^{2n+1}\over (2n+1)!}(-1)^n\prod_{k=0}^{n}(p+2k)\right]\, . \label{eq:1.4}
 \end{eqnarray}
For a complete treatment of the subject of maximal set of kernels as well as of the motivations behind the scale-space approach, see our previous paper [3]. 

In the present paper we consider and solve the following important problems related to the study of the maximal set of kernels:
\medskip

\noindent {\bf 1)}
We derive formulas for the Fourier transforms of the kernels $\Lambda_{\alpha\beta p}(x,\rho)$. This gives us a quick and efficient tool for constructing zero-crossings of the corresponding function $\phi(x,\rho)$, and therefore for the construction of the tree.
\medskip

\noindent {\bf 2)}
We prove a number of properties concerning the convergence of functions $\phi(x,\rho)$ and their partial derivatives, where $\phi(x,\rho)$ is defined as a convolution of a signal $f(x)$ with a kernel $\Lambda_{\alpha\beta p}(x,\rho)$. We develop a general theory of convergence of  kernels.
\medskip

\noindent {\bf 3)}
Using results in (1) and (2) above, we give a complete topological description of level curves $\phi_x(x,\rho)=c$ as well as level curves of arbitrary partial derivatives of $\phi(x,\rho)$.
\medskip

\noindent {\bf 4)}
If we consider a convolution $\Lambda_{\alpha\beta p}(x,\rho)=f(x)*\Lambda_{\alpha\beta p}(x,\rho)$ of a signal $f(x)$ with a kernel $\Lambda_{\alpha\beta p}(x,\rho)$ then as $\alpha$, $\beta$ and $p$ vary, this results in different zero crossings and different trees. We study bifurcations of the curves under parameter change. This gives us useful integer invariants for the signal which provides another way to classify the signals. We also use methods of differential and algebraic topology to derive some global formulas for tree invariants.  

\section{Properties of Convolution Kernels}
  \label{sec2}
Let $g_p^\theta = \rho^{p+1}\Lambda_p^\theta(x\rho)$ where $\theta = e,o$. Then, in order to be able to compute the convolution $\phi_p^\theta(x, \rho)=f(x)*g_p^\theta(x,\rho)$ more efficiently, we need to know its Fourier transform. This will allow us to perform computations in the frequency domain. Another question closely related to this is a question of finding the limiting function $\phi_p^\theta(x,\rho)$ as $\rho\to \infty$. It is well known that for the Gaussian $g_0^e(x,\rho)$, the Fourier transform is $c\,\!e^{s^2/\rho^2}$ where $c$ is some constant. As $\rho\to \infty$, $c\,\!e^{s^2/\rho^2}\to c$ ; thus, when we perform the convolution, we get the same signal $f(x)$ in the limit (this is due to the fact that the $\delta$ - function has the Fourier transform equal to 1). Therefore, the limiting value for $\phi_0^\theta(x,\rho)$ as $\rho\to\infty$ is the function $f(x)$ itself, i.e., as $\rho$ increases, the complexity of the convolution  $\phi_0^\theta(x,\rho)$ increases until we come to a signal $f(x) = \phi_0(x,\infty)$ in the limit. It turns out that if $n$ is an integer, the limiting function for $\phi_n^\theta(x,\rho)$ as $\rho\to\infty $ is the $n$th order derivative of $f(x)$, where $\pi$ is odd or even, depending on whether $n$ is odd or even. 

We will generalize this simple property for every $p$ by finding the limit $\phi_p^\theta(x,\rho)$ as $\rho\to\infty$ , proving its existence and showing that it is in some sense a fractional $p$th order derivative of the signal. We will also consider here a number of very useful properties of convolutions $g_p^\theta(x,\rho)$.
\begin{remark}\label{rem2.1}
Since the trees of the signals do not change if we scale it or multiply it by a constant, we will determine Fourier transforms only up to a constant factor and will not spend any time on determining these factors precisely.  
\end{remark}
In what follows, a function $f(x)$ and its Fourier transform $\mathcal{F}(f)$ will be called a \textit{Fourier pair} if they are connected by the usual direct and inverse Fourier transform formulas. In this case, we will write $f~\mathcal{F}(f)$. In general, the direct Fourier transform formula does not imply the inverse, i.e., if $\mathcal{F}(f)=F(s)$, it is not generally true that $f(x) =\mathcal{F}^{-1}\big(F(s)\big)$.  
\begin{theorem}\label{thm2.1}
If $p>0$, then the Fourier transforms $F_p^\theta(\omega)$ of $\Lambda_p^\theta(x)$, $\theta=e,o$ exist, and
\begin{equation}
  \label{eq:2.1}
  F_p^e(\omega)=c\,\!|\omega|^p\,\!e^{-\omega^2/2}\, ,\qquad F_p^o(\omega)=d\,\!j\, sgn(\omega)|\omega|^p\,\!e^{-\omega^2/2}\, .
\end{equation}
 where $c$, $x$ are some real numbers not equal to zero. Also, $F_0^e(\omega)=c_1\,\!|\omega|^p\,\!e^{-\omega^2/2}$.
  \end{theorem}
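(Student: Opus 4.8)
The plan is to prove the identity \emph{backwards}: rather than transforming the power series $\Lambda_p^\theta$ term by term, I would start from the claimed answers $G_p^e(\omega)=|\omega|^p e^{-\omega^2/2}$ and $G_p^o(\omega)=j\,\operatorname{sgn}(\omega)|\omega|^p e^{-\omega^2/2}$, compute their inverse Fourier transforms, and show that these reproduce $\Lambda_p^e$ and $\Lambda_p^o$ up to a nonzero multiplicative constant. For $p>0$ (indeed for $p>-1$) both $G_p^\theta$ lie in $L^1\cap L^2$, so their inverse transforms are genuine continuous functions and Fourier inversion applies; identifying the inverse transform with a constant multiple of $\Lambda_p^\theta$ then gives, via inversion, exactly the asserted transform of $\Lambda_p^\theta$.

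For the even case I would write $\mathcal{F}^{-1}(G_p^e)(x)=\frac{2}{\sqrt{2\pi}}\int_0^\infty \omega^p e^{-\omega^2/2}\cos(x\omega)\,d\omega$, the odd part of the integrand vanishing by parity. Expanding $\cos(x\omega)$ in its Taylor series and interchanging summation and integration reduces the problem to the moment integrals $\int_0^\infty \omega^{p+2n}e^{-\omega^2/2}\,d\omega$, which evaluate, under the substitution $u=\omega^2/2$, to $2^{(p+2n-1)/2}\,\Gamma\!\big(\frac{p+2n+1}{2}\big)$. The odd case is identical with $\sin(x\omega)$ in place of $\cos(x\omega)$ and with moments $\int_0^\infty \omega^{p+2n+1}e^{-\omega^2/2}\,d\omega=2^{(p+2n)/2}\,\Gamma\!\big(\frac{p}{2}+n+1\big)$; the $j\,\operatorname{sgn}(\omega)$ factor emerges automatically from the parity of the integrand.

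The key algebraic step is to convert these Gamma values into the finite products defining $\Lambda_p^\theta$. Applying $\Gamma(z+n)=\Gamma(z)\prod_{k=0}^{n-1}(z+k)$ with $z=\frac{p+1}{2}$ gives $\Gamma\!\big(\frac{p+2n+1}{2}\big)=\Gamma\!\big(\frac{p+1}{2}\big)\,2^{-n}\prod_{k=0}^{n-1}(p+2k+1)$, so the factors $2^{\pm n}$ cancel, the $n$-independent constant $2^{(p+1)/2}\Gamma\!\big(\frac{p+1}{2}\big)$ pulls out of the sum, and what remains is precisely $\sum_n \frac{(-1)^n x^{2n}}{(2n)!}\prod_{k=0}^{n-1}(p+2k+1)$, a nonzero multiple of $\Lambda_p^e$. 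The same identity with $z=\frac{p}{2}$ turns $\Gamma\!\big(\frac{p}{2}+n+1\big)$ into $\Gamma\!\big(\frac{p}{2}\big)\,2^{-(n+1)}\prod_{k=0}^{n}(p+2k)$, yielding a nonzero multiple of $\Lambda_p^o$. Both constants are nonzero for $p>0$, which pins down $c,d\neq 0$.

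I expect the main obstacle to be rigor rather than computation, specifically justifying the interchange of the infinite sum with the integral. This is handled by dominated convergence: the partial sums of $\cos(x\omega)$ (resp.\ $\sin(x\omega)$) are dominated by $\cosh(x\omega)$, and $\int_0^\infty \omega^p e^{-\omega^2/2}\cosh(x\omega)\,d\omega<\infty$ because the Gaussian factor defeats the exponential growth. The hypothesis $p>0$ enters twice: near $\omega=0$ it keeps the $n=0$ moment finite (only $p>-1$ is strictly needed there), and, more importantly, it guarantees that $\Lambda_p^\theta$ is itself in $L^1$ so that its \emph{classical} Fourier transform exists and returns $G_p^\theta$ — the singularity of $|\omega|^p$ at the origin forces the decay $\mathcal{F}^{-1}(G_p^\theta)(x)=O\!\big(|x|^{-(p+1)}\big)$, which is integrable exactly when $p>0$. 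Finally, the boundary statement for $F_0^e$ follows by a direct check: at $p=0$ the product $\prod_{k=0}^{n-1}(2k+1)=(2n-1)!!=\frac{(2n)!}{2^n n!}$ collapses $\Lambda_0^e$ to the Gaussian $\frac{1}{\sqrt{2\pi}}e^{-x^2/2}$, whose transform is the stated $c_1 e^{-\omega^2/2}$, i.e.\ the $p=0$ instance of the even formula.
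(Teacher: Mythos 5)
Your proposal is correct, and it takes a genuinely different route from the paper. The paper works \emph{forward}: it quotes from [3] the second-order ODE $h''(x)+xh'(x)+(p+1)h(x)=0$ satisfied by $h=\Lambda_p^\theta$, applies the Fourier transform to turn it into the first-order equation $H'(s)/H(s)=s+p/s$, integrates along the imaginary axis to obtain $H(s)=c_1s^pe^{s^2/2}$, and then uses parity and reality of $\Lambda_p^\theta$ to resolve the branch ambiguity of $s^p$ into the $|\omega|^p$ and $\operatorname{sgn}(\omega)$ form; the existence of the transform (i.e.\ $\Lambda_p^\theta\in L^1$ for $p>0$) is simply cited from [3]. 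You work \emph{backward}: you compute the inverse transform of the claimed answer term by term, reducing it to the Gaussian moments $\int_0^\infty\omega^{p+2n}e^{-\omega^2/2}\,d\omega$ and using $\Gamma(z+n)=\Gamma(z)\prod_{k=0}^{n-1}(z+k)$ to reassemble exactly the defining series (1.3) and (1.4); your dominated-convergence justification (partial sums bounded by $\cosh(x\omega)$, which the Gaussian kills) is sound, and your Gamma bookkeeping is right up to the harmless constant $2^{(p\pm1)/2}$. What your route buys: explicit nonzero constants, no branch-of-$s^p$ or singular-point-at-the-origin issues to finesse, and no need to justify transforming each term of an ODE (in particular the $\mathcal{F}(tg(t))=-dG/ds$ step the paper must defend). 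What the paper's route buys: it \emph{derives} the answer rather than verifying a guessed one, and it is shorter because it exploits the ODE structure by which the kernels were constructed. The one soft spot on your side is the closing step: passing from $\mathcal{F}^{-1}(G_p^\theta)=c\,\Lambda_p^\theta$ to $\mathcal{F}(\Lambda_p^\theta)=c^{-1}G_p^\theta$ needs $\Lambda_p^\theta\in L^1$, which you obtain from the asserted decay $O(|x|^{-(p+1)})$. That asymptotic (Fourier transform of a rapidly decaying function with an algebraic singularity $|\omega|^p$ at the origin) is true and standard, but it is the one claim you would still need to prove or cite; the paper faces the same issue and disposes of it by citing [3].
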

  \begin{proof}
The existence of the Fourier transform is guaranteed by the absolute integrability of functions $\Lambda_p^\theta(x)$ for $p>0$ (see [3]) and by their differentiability (see [4]), and Theorem 2.1.

Let $\Lambda_p^\theta(x)=h(x)$. Then, for every $x$ it satisfies the equation (see [3] ) 
\begin{equation}
  \label{eq:2.2}
  h^{\prime\prime}(x)+x\,\!h^\prime(x)+(p\!+\!1)h(x)=0\, .
\end{equation}
Formulas (3.5) and (3.6) in [3] guarantee the existence of Fourier transforms of each summand in (\ref{eq:2.2}), and its convergence to zero as $|x|\to\infty$. Applying a Fourier transform to (\ref{eq:2.2}), gives us
\begin{equation}
  \label{eq:2.3}
  s^2H(s)-{d\over ds}\left(sH(s)\right)+(p\!+\!1)H(s)=0
\end{equation}
where $H(s)= \mathcal{F}\big(h(t)\big)\,$,  $s=j\,\!\omega$, and $\omega$ is real.
 
 Here we used the fact that if $\mathcal{F}(g) = G(s)$ , then $\mathcal{F}\big( g(t)\cdot t\big) = - {dG(s)\over ds}$ provided $\int_{\infty}^\infty t\,\!g(t)e^{-st} dt$ converges uniformly in $s$ and  $\int_{\infty}^\infty g(t)e^{-st} dt$ converges (see [5], 11.55 a). In our case, $g(t)  = h^\prime(t)$ and the conditions on convergence follow from (3.5) in [3], and from the fact that $p>0$. From (\ref{eq:2.3}), it follows that  ${H^\prime(s)\over H(s)}= s + {p\over s}$. Integrating this along $j\omega$ we get
 \begin{equation}
   \label{eq:2.4}
   H(s) = c_1 s^p e^{s^2/2}\, .
 \end{equation}
The path of integration should not include $0$, since we have a singular point there. Thus, if $s_0=j\omega_0$, $\omega_0>0$, formula (\ref{eq:2.4}) holds for $s=j\omega$ with $\omega>0$ only. Since $H(s)$ must be real because $h(t)$ is an even function, $H(s) = c\omega^pe^{-\omega^2/2}$ for some real $\omega>0$ and $s = j\omega$. Now, since $H(-s)=H(s)$ for an even $h(x)$, we have $H(s)=c|\omega|^pe^{-\omega^2/2}$ for $s=j\omega$, $w<0$. Since $F_p^e(\omega)=H(s)$, this proves the first part of (\ref{eq:2.1}). The second formula in (\ref{eq:2.1}) can be proved in an entirely similar fashion. The last equation follows from the fact that $\Lambda_0^e(x)$ is Gaussian.
\end{proof}
 \begin{remark}\label{rem2.2} 
Formulas (\ref{eq:2.1}) are extremely useful for fast frequency domain computerized computation of convolution $f(x)*g_p^\theta(x,\rho)$ and therefore for constructing trees. Formulas (\ref{eq:2.1}) can be rewritten in an equivalent form:
For $p\neq 2n+1$, $\theta=e$ and for $p\neq 2n$, $\theta=o$, we have:
\begin{equation}
  \label{eq:2.5}
F_p^e(s)=c\big(s^p+(-s)^p\big)e^{-s^2/2}\,,\qquad F_p^o(s)=d\big(s^p-(-s)^p\big)e^{-s^2/2}\, \, \,s=j\omega\, ,
\end{equation}
where $c$, $d$ are real numbers not equal to zero. 
\end{remark}
\medskip

Before we go further let us recall a definition of fractional derivatives. Let $q<0$, then [6] 
\begin{equation}
  \label{eq:2.6}
  {d^q f(x)\over dx^q} = {1\over \Gamma(-q)}\int_0^x{f(\rho)\,dy\over(x-\rho)^{q+1}} 
                      = {1\over \Gamma(-q)}\int_0^x{f(x-\rho)\over \rho^{q+1} }\,dy\, .
\end{equation}
This definition of fractional derivatives belongs to Liouville. We propose a related but slightly different definition. Let $\mathcal{F}(f)=F(s)$; then for $s=j\omega$,
\begin{equation}
  \label{eq:2.7}
  {d^p f(x)\over dx^p}=f^{(p)}(x)={c\over 2\pi j}\int_{-\infty}^\infty s^pF(s)e^{sx}\,ds\, .
\end{equation}
We put a constant real number in (\ref{eq:2.7}) as usual to emphasize that we are only interested in functions up to a scalar factor. In what follows, we will use the following simple fact from the theory of Fourier transforms [4]:
\begin{proposition}\label{prop:2.1}
If a function $f(x)$ is continuously differentiable with $i$th order derivatives absolutely integrable for all $i$, $0\!\leq \!i\! \leq \!m $, then for some $c>0$,
\begin{equation}
  \label{eq:2.8}
  \left| \mathcal{F}\big(f(x)\big)(s)\right| = |F(s)|\leq {c\over|s^m|}\, .
\end{equation}
\end{proposition}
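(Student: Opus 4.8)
The plan is to exploit the fundamental interchange between differentiation and multiplication by $s$ under the Fourier transform, and then close the estimate with the elementary bound that the transform of an $L^1$ function is bounded by its $L^1$ norm. Concretely, I would first establish the identity $\mathcal{F}\big(f^{(m)}\big)(s) = s^m F(s)$ by integrating by parts $m$ times. With the transform convention $F(s)=\int_{-\infty}^\infty f(t)e^{-st}\,dt$, $s=j\omega$, a single integration by parts gives
\begin{equation}
  \mathcal{F}\big(f^\prime\big)(s) = \big[f(t)e^{-st}\big]_{-\infty}^\infty + s\int_{-\infty}^\infty f(t)e^{-st}\,dt = s\,F(s),
\end{equation}
and iterating this $m$ times (each time applying it to the next derivative) yields $\mathcal{F}\big(f^{(m)}\big)(s)=s^m F(s)$, provided every boundary term vanishes.

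The second step is the easy one: since $f^{(m)}$ is absolutely integrable by hypothesis, its Fourier transform is uniformly bounded, because for every $s=j\omega$ one has
\begin{equation}
  \left|\mathcal{F}\big(f^{(m)}\big)(s)\right| = \left|\int_{-\infty}^\infty f^{(m)}(t)e^{-st}\,dt\right| \leq \int_{-\infty}^\infty \left|f^{(m)}(t)\right|\,dt =: c,
\end{equation}
a finite constant. Combining this with the identity from the first step gives $|s^m|\,|F(s)| = |\mathcal{F}(f^{(m)})(s)| \leq c$, which is exactly the asserted bound $|F(s)| \leq c/|s^m|$.

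The main obstacle, and the only point requiring genuine care, is justifying that the boundary terms $\big[f^{(i)}(t)e^{-st}\big]_{-\infty}^\infty$ vanish for each $0\le i\le m-1$. Absolute integrability of $f^{(i)}$ alone does not force $f^{(i)}(t)\to 0$ as $|t|\to\infty$; however, the hypothesis also gives absolute integrability of $f^{(i+1)}$, so $f^{(i)}(t)=f^{(i)}(0)+\int_0^t f^{(i+1)}(\tau)\,d\tau$ converges to finite limits as $t\to\pm\infty$, and these limits must be $0$ since $f^{(i)}$ is itself integrable. Here $|e^{-st}|=1$ for $s=j\omega$, so the boundary contributions indeed vanish. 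Once this decay is in hand, the repeated integration by parts is valid and the two displayed estimates close the argument.
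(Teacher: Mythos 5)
Your proof is correct and complete. Note, however, that the paper itself offers no proof of this proposition: it is invoked as a ``simple fact from the theory of Fourier transforms'' with a citation to [4] (Shilov), so there is no internal argument to compare against. What you have written is the standard textbook proof of that cited fact: iterate $\mathcal{F}\big(f^{\prime}\big)(s)=sF(s)$ by parts $m$ times, then bound $\big|\mathcal{F}\big(f^{(m)}\big)(s)\big|\leq \int_{-\infty}^{\infty}\big|f^{(m)}(t)\big|\,dt$. You also correctly identified and handled the only delicate point, which the paper's citation glosses over entirely: integrability of $f^{(i)}$ alone does not give decay at infinity, but integrability of $f^{(i+1)}$ forces $f^{(i)}$ to have finite limits at $\pm\infty$ via the fundamental theorem of calculus, and integrability of $f^{(i)}$ then forces those limits to be zero, so every boundary term vanishes (using $|e^{-st}|=1$ for $s=j\omega$). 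One cosmetic remark: if $f^{(m)}\equiv 0$ your constant $c=\|f^{(m)}\|_{L^1}$ is zero rather than positive, but in that case the decay argument forces $f\equiv 0$, so the stated bound holds with any $c>0$.
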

Let $\mu_\alpha(t) = t^{-\alpha}$ for $t>0$ and  $\mu(t)=0$ for $t\leq 0$ where $0<\alpha<1$; then $\mathcal{F}\big(\mu(t)\big)(s)=M(s)=s^{\alpha-1}\Gamma(1\!-\!\alpha)$. Therefore, by the convolution theorem we have:
\begin{align}
f^{(p)}(x)=\mathcal{F}^{-1}\big(c\,\!s^pF(s)\big) &=   \notag \\ 
\mathcal{F}^{-1}\big(c\,\!s^p\big)*f(x) &= c\,\!\mu_{p+1}(x)*f(x)= c\,\!\int_0^\infty{f(x-\rho)\over \rho^{p+1} }\,dy \, .\notag
\end{align}
If, in addition, $f(\rho)=0$ for $\rho<0$ then $f^{(p)}(x) = c\,\!\int_0^x{f(x-\rho)\over \rho^{p+1}}\,dy$ and this coincides with classical definition (\ref{eq:2.6}). However, the formulas we used above require certain convergence properties which are not satisfied for every $f(x)$ and $p$ . We also need to establish the range of applicability of definition (\ref{eq:2.7}) and show that it extends the usual definition of derivative when $p$ is an integer. Our purpose here is not to develop a fractional calculus, but to use some of its concepts to explain our results and their applications. As usual, we introduce the equivalence relation equating a function $f(x)$ with $c\,\!f(x)$ where $c$ is a constant not equal to zero. 
\begin{theorem}\label{thm2.2}
  If a function $f(x)$ satisfied \ref{prop:2.1} for $m>p+1$, then\newline
\medskip

 (i) Formula (\ref{eq:2.7}) gives us a correctly defined $f^\prime(x)$. \newline
\medskip

(ii) Also, if $x\,\!f(x)$ is absolutely integrable, then
\begin{equation}\label{eq:2.9}
  c\,\!s^pF(s)=\mathcal{F}\big(f^{(p)}(x)\big),\qquad  \mathcal{F}^{-1}\big(c\,\!s^pF(s)\big)=f^{(p)}(x)\, .
\end{equation}\newline
\medskip

(iii) Formula (\ref{eq:2.7}) gives us the usual derivative if $p$ is an integer. \newline
\medskip

(iv) If $p+q+1<m$, then ${d^p\over dx^p}\big({d^q\over dx^q}f(x)\big) = {d^{p+q}\over dx^{(p+q)}}f(x)$. 
\end{theorem}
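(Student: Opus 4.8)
The plan is to run all four parts off a single engine: the decay bound $|F(s)|\le c/|s|^{m}$ supplied by Proposition \ref{prop:2.1} together with the Fourier inversion theorem. The guiding principle is that multiplication by $s^{p}$ consumes exactly $p$ powers of decay, so the hypothesis $m>p+1$ is precisely the amount of decay needed to keep the defining integral absolutely convergent.

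First I would establish (i) (reading the conclusion as ``$f^{(p)}(x)$ is correctly defined''). Writing $s=j\omega$, the integrand in (\ref{eq:2.7}) has modulus $|\omega|^{p}\,|F(j\omega)|$. On $|\omega|\ge 1$ the bound (\ref{eq:2.8}) gives $|\omega|^{p}|F|\le c\,|\omega|^{\,p-m}$, which is integrable because $m-p>1$; on $|\omega|\le 1$ I use that $F$ is bounded (since $f\in L^{1}$) and that $|\omega|^{p}$ is locally integrable near the origin (indeed bounded when $p\ge 0$). Hence the integral converges absolutely and defines a continuous function, so (\ref{eq:2.7}) is legitimate.

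Next I would dispatch the integer case (iii) and the Fourier-pair statement (ii) together, since both reduce to inversion. For $p=n$ a nonnegative integer, integrating by parts $n$ times and using that each $f^{(i)}\to 0$ at infinity (a consequence of absolute integrability of the derivatives) gives the classical identity $\mathcal{F}\big(f^{(n)}\big)=c\,s^{n}F(s)$; feeding this into (\ref{eq:2.7}) yields $f^{(n)}=\mathcal{F}^{-1}\big(\mathcal{F}(f^{(n)})\big)$, and the inversion theorem returns the ordinary derivative. For (ii), the first equation in (\ref{eq:2.9}) is exactly the statement that $\mathcal{F}$ undoes the inverse transform used to define $f^{(p)}$; the extra hypothesis that $x\,f(x)$ is absolutely integrable is what makes $F$ continuously differentiable and thereby licenses the inversion theorem at the required smoothness, while the second equation is (\ref{eq:2.7}) itself read as an inverse transform.

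Finally, for (iv) I would chain (ii). Put $g(x)=d^{q}f/dx^{q}$; by (ii) its transform is $G(s)=c\,s^{q}F(s)$, so $|G(s)|\le c'|s|^{\,q-m}$, i.e.\ $g$ satisfies Proposition \ref{prop:2.1} with exponent $m-q$. The hypothesis $p+q+1<m$ is nothing but $m-q>p+1$, so (ii) applies a second time and gives $\mathcal{F}\big(d^{p}g/dx^{p}\big)=c\,s^{p}G(s)=c^{2}s^{p+q}F(s)$, which equals $\mathcal{F}\big(d^{p+q}f/dx^{p+q}\big)$ up to a nonzero constant. Since we work modulo nonzero scalar multiples, Fourier inversion yields the equality of functions. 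I expect the genuinely delicate point to be the consistent meaning of $s^{p}$ on the imaginary axis---the branch encoded in the $sgn(\omega)$ factor of Theorem \ref{thm2.1}---which must be fixed once and for all so that both sides of (\ref{eq:2.9}) and of (iv) agree up to the permitted scalar; the decay bookkeeping, namely that each fractional differentiation subtracts its order from the available decay exponent and that $p+q+1<m$ keeps that budget positive, is the structural heart of the argument.
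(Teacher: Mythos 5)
Your proposal matches the paper's proof essentially step for step: part (i) is absolute convergence of the defining integral from the decay bound (\ref{eq:2.8}); part (ii) uses $x\,f(x)\in L^1$ to get that $F(s)$ (hence $s^pF(s)$) is continuously differentiable and absolutely integrable and then invokes the Fourier inversion theorem; part (iii) is the classical integer-order fact; and part (iv) is obtained by chaining (ii) with the definition (\ref{eq:2.7}), exactly as the paper does, only with the decay bookkeeping made explicit. The one (harmless) deviation is in (iv), where you apply (ii) a second time to $g=f^{(q)}$ --- which strictly requires the unverified hypothesis $x\,g(x)\in L^1$ --- whereas reading $d^pg/dx^p$ directly from the definition (\ref{eq:2.7}) as $\mathcal{F}^{-1}\big(c\,s^pG(s)\big)=\mathcal{F}^{-1}\big(c^2s^{p+q}F(s)\big)$ and then citing (ii) for $f$ at exponent $p+q$ gives the identity without any inversion step or extra assumption on $g$.
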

\begin{proof}
(i): Follows immediately from (\ref{eq:2.8}) and the definition of $f^{(p)}(x)$ in (\ref{eq:2.7}).\newline
\medskip

(ii): If $x\,\!f(x)$ is absolutely integrable, then $F(s)$ is continuously differentiable and absolutely integrable (see [4]). Therefore, $s^p F(s)$ is continuously differentiable. It is  also absolutely integrable as is seen from (\ref{eq:2.8}) and the condition $m>p+1$. Since $2\pi i\,\!f^{(p)}(-x) = \mathcal{F}\big(c\,\!s^pF(s)\big)$ by (\ref{eq:2.7}), we can now apply the theorem on the inversion of the Fourier transform [4], and conclude that $\mathcal{F}\big(f^{(p)}(x)\big)=$ and $f^{(p)}(x)=\mathcal{F}^{-1}\big(c\,\!s^pF(s)\big)$\newline
\medskip

(iii): If $p$ is an integer, the assertion of (iii) is a very well known fact. \newline
\medskip

(iv): Follows immediately from (ii) and (\ref{eq:2.7}).
\end{proof}
\begin{remark} \label{rem2.4}
 Formulas (\ref{eq:2.5}) and Theorem~\ref{thm2.2} imply that for some $\alpha$ and $\beta$, linear combination $\alpha\Lambda_p^e(x)+\beta\Lambda_p^o(x)=\mu_p(x)$ is actually the $p$th fractional derivative of the Gaussian ${1\over\sqrt{2\,\!\pi}}e^{-(x)^2/2}  = G(x)$.
\end{remark}
This is a generalization of the observation made in Section 2 of [3] for the integer $p$ . Coefficients $\alpha$ and $\beta$ are easy to find. Thus, we can form convolutions $f(x)*\mu_p(x\rho)$ for every $\rho$ and $p$. As we mentioned before, the limit function for $f(x)*G(x\rho)$ as $\rho \to \infty$ is $f(x)$. It turns out that the limit function for $f(x)*\mu_p(x\rho)$ as $\rho\to \infty$ is $f^{(p)}(x)$. This is a very important generalization of the formula for $p=0$. This property follows from the following theorem. Before we state Theorem~\ref{thm2.3}, we need a preliminary lemma. 
\begin{lemma}\label{lem2.1}
Let $f(x)$ be an absolutely integrable and continuously differentiable function, and let\newline
\medskip  

(i) $p>0$ or $\theta=e$ and $p=0$, or \newline
\medskip

(ii) $p$ be arbitrary and $f(x)$ transient, i.e. zero outside a finite interval.  \newline
\medskip

Then,  \newline
\medskip

(a) $\phi_p^\theta(x,\rho)=f(x)*\rho^{p+1}\Lambda_p^e(x\rho)$ is correctly defined and absolutely integrable on $(-\infty,\infty)$ by $x$ and is infinitely differentiable by $x$ and $\rho$.  \newline
\medskip

(b)  The following functions constitute Fourier pairs.
\begin{align}
   \rho^{p+1}\Lambda_p^e(x\rho)\ &~ \ c\big( s^p+(-s)^p\big)e^{s^2/(2\rho^2)} \, \label{eq:2.10a}\, , \\
   \rho^{p+1}\Lambda_p^0(x\rho)\ &~ \ c\big( s^p-(-s)^p\big)e^{s^2/(2\rho^2)} \, \label{eq:2.10b}\, ,
\end{align}
\begin{align}
   \phi_p^e(x,\rho)\ &~ \ c\big( s^p+(-s)^p\big)e^{s^2/(2\rho^2)}F(s) \, \label{eq:2.11a}\, , \\
   \phi_p^0(x,\rho)\ &~ \ c\big( s^p-(-s)^p\big)e^{s^2/(2\rho^2)}F(s) \, \label{eq:2.11b}\, ,
\end{align} 
where $F(s)=\mathcal{F}\big(f(x)\big)$, $s=i\,\!\omega$.
\end{lemma}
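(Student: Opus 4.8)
The plan is to prove (b) by reducing the kernel transforms to Theorem~\ref{thm2.1} through a scaling argument, then to obtain the convolution transforms from the convolution theorem, using part (a) to guarantee that all objects are well defined. I would treat (a) first, since the convolution $\phi_p^\theta$ must be known to exist and to lie in a suitable space before its Fourier transform can be discussed.

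For part (a), well-definedness and absolute integrability follow from the fact that the convolution of two absolutely integrable functions is again absolutely integrable (Young's inequality, $L^1 * L^1 \subseteq L^1$). Under hypothesis (i) with $p>0$ the kernel $\rho^{p+1}\Lambda_p^e(x\rho)$ is absolutely integrable by the results of [3] (scaling does not affect $L^1$-membership) and $f\in L^1$ by assumption; under hypothesis (ii) the integral defining $\phi_p^\theta$ runs over the compact support of $f$, so finiteness is immediate. For infinite differentiability I would differentiate under the integral sign. The clean case is (ii): the kernel $\rho^{p+1}\Lambda_p^\theta((x-\nu)\rho)$ is smooth (the series defining $\Lambda_p^\theta$ are entire) and $\nu$ ranges over a compact set, so differentiation in $x$ and in $\rho$ under the integral is automatic, while integrability in $x$ of the result comes from the decay of $\Lambda_p^\theta$ at infinity. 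In case (i) the same differentiation requires a dominated-convergence argument; here I would use the ODE (\ref{eq:2.2}) satisfied by $h=\Lambda_p^\theta$ to express every higher $x$-derivative as a polynomial combination of $h$ and $h'$, and thereby bound the $x$- and $\rho$-derivatives of the kernel (note $\partial_\rho[\rho^{p+1}\Lambda_p^e(x\rho)] = (p+1)\rho^p\Lambda_p^e(x\rho) + \rho^{p+1}x(\Lambda_p^e)'(x\rho)$, so the extra factor $x$ must be absorbed into the decay) by an $L^1$ function independent of the parameter for $\rho$ in compact subintervals of $(0,\infty)$.

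For the kernel Fourier pairs (\ref{eq:2.10a})--(\ref{eq:2.10b}) I would apply the scaling rule $h(\rho x) \leftrightarrow \rho^{-1}H(\omega/\rho)$ to Theorem~\ref{thm2.1}: the prefactor $\rho^{p+1}$ then gives $\rho^p F_p^e(\omega/\rho)$, and substituting $F_p^e(\omega/\rho) = c\rho^{-p}|\omega|^p e^{-\omega^2/(2\rho^2)}$ the powers of $\rho$ cancel, leaving $c|\omega|^p e^{-\omega^2/(2\rho^2)}$, which by Remark~\ref{rem2.2} is exactly $c(s^p+(-s)^p)e^{s^2/(2\rho^2)}$ with $s=j\omega$; the odd case is identical. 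For (\ref{eq:2.11a})--(\ref{eq:2.11b}) I would invoke the convolution theorem $\mathcal{F}(f*g)=\mathcal{F}(f)\mathcal{F}(g)$, multiplying the kernel transform by $F(s)$. To conclude that these are genuine Fourier pairs (and not merely one-directional transforms, the distinction flagged before Theorem~\ref{thm2.1}), I would verify that the product $c(s^p\pm(-s)^p)e^{s^2/(2\rho^2)}F(s)$ is absolutely integrable: the Gaussian factor $e^{-\omega^2/(2\rho^2)}$ dominates the polynomial $|\omega|^p$ and $F$ is bounded since $f\in L^1$, so the inverse transform is legitimate and recovers $\phi_p^\theta$.

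I expect the main obstacle to be the infinite differentiability in case (i), specifically the justification of differentiation under the integral in $\rho$: because $\rho$ appears both in the amplitude $\rho^{p+1}$ and inside the argument $x\rho$, the $\rho$-derivative produces a term carrying an extra factor of $x$, and one must check that the decay of $\Lambda_p^\theta$ and its derivatives (extracted from (\ref{eq:2.2})) is strong enough to keep the dominating function integrable uniformly for $\rho$ in compact subsets of $(0,\infty)$. The remaining steps are essentially bookkeeping once Theorem~\ref{thm2.1}, Remark~\ref{rem2.2}, and the standard scaling and convolution theorems are in hand.
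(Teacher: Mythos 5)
Your proposal is correct and takes essentially the same route as the paper: part (a) is established by differentiation of the improper integral with respect to a parameter (the paper simply cites this theorem from [5]), and part (b) is obtained by reducing to Theorem~\ref{thm2.1} via the scaling rule and the convolution theorem. The paper's own proof is only two sentences long, so your write-up is in effect the detailed expansion of it, supplying the $L^1 * L^1 \subseteq L^1$ argument, the cancellation of the powers of $\rho$, and the integrability check needed to make the pairs genuine (two-directional) Fourier pairs.
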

\begin{proof}
(a) trivially follows from the theorem on differentiation of improper integrals by a parameter [5]. (b) follows from Theorem~\ref{thm2.1}.     
\end{proof}
\begin{theorem}\label{thm2.3}
If, for a function $f(x)$, conditions of Theorem~\ref{thm2.2} and either (i) or (ii) of  Lemma~\ref{lem2.1} are satisfied, then\newline
\medskip

(i) Functions $\phi _p^e(x,\rho)$ , $\theta=e,o$ converge uniformly  as $\rho\to\infty$ to a continuous function  $f_p^\theta(x)$ which is continuously differentiable $k$-times if $k<m-p-1$.  \newline
\medskip

(ii) For any $k$, $0<k<m-p-1$ functions ${d^k\,\!\phi _p^e(x,\rho) \over dx^k}$, $\theta=e,o$ converge uniformly to ${d^k \over dx^k}f_p^\theta(x)$ as $\rho\to\infty$.\newline
\medskip

(iii) For the functions $f_p^\theta(x)$, $\theta=e,o$ we have the following Fourier correspondence:  
\begin{align}
  f_p^e(x,\rho)\ &\sim\  \ c\big( s^p+(-s)^p\big)F(s) \, \notag\, , \\
  f_p^0(x,\rho)\ &\sim \ c\big( s^p-(-s)^p\big)F(s) \, \notag\, ,
\end{align} 
where $F(s)=\mathcal{F}\big(f(x)\big)]$.  
\end{theorem}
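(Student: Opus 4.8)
The plan is to work entirely on the Fourier side, where the dependence on $\rho$ is isolated in the single factor $e^{s^2/(2\rho^2)}$, and then transfer convergence back through the inverse transform. I would define $f_p^\theta(x)$ to be the inverse Fourier transform of the multiplier $c\bigl(s^p\pm(-s)^p\bigr)F(s)$ obtained from Lemma~\ref{lem2.1} by formally setting $\rho=\infty$; this makes (iii) hold by construction, provided the inverse transform exists. Writing $s=i\omega$ we have $s^2=-\omega^2$, so $e^{s^2/(2\rho^2)}=e^{-\omega^2/(2\rho^2)}$, a quantity lying in $(0,1]$ and tending to $1$ pointwise as $\rho\to\infty$. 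Using the inverse-transform representations of both $\phi_p^\theta(x,\rho)$ (from Lemma~\ref{lem2.1}) and $f_p^\theta(x)$, the difference becomes
\[
\phi_p^\theta(x,\rho)-f_p^\theta(x)=\frac{c}{2\pi}\int_{-\infty}^\infty\bigl(s^p\pm(-s)^p\bigr)\bigl(e^{-\omega^2/(2\rho^2)}-1\bigr)F(s)\,e^{i\omega x}\,d\omega .
\]

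First I would establish the uniform convergence in (i). Taking absolute values and using $|e^{i\omega x}|=1$ removes all dependence on $x$, so it suffices to bound the integral of the modulus of the integrand. By Theorem~\ref{thm2.1} and Remark~\ref{rem2.2} the multiplier satisfies $\bigl|s^p\pm(-s)^p\bigr|\le C|\omega|^p$, while the hypothesis that $f$ obeys Proposition~\ref{prop:2.1} with $m>p+1$ gives $|F(s)|\le c\,|\omega|^{-m}$. Since $|e^{-\omega^2/(2\rho^2)}-1|\le 1$, the integrand is dominated, uniformly in both $x$ and $\rho$, by a constant multiple of $|\omega|^{p-m}$: this is integrable at infinity because $p-m<-1$, and integrable at the origin because $p>0$ (the even $p=0$ case has constant multiplier $s^0+(-s)^0=2$ and is handled directly, and in case (ii) of Lemma~\ref{lem2.1} the transient hypothesis makes $F$ smooth so the origin is again controlled). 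The Lebesgue dominated convergence theorem, applied with $e^{-\omega^2/(2\rho^2)}-1\to 0$ pointwise, then forces the integral to $0$, and since the majorant is independent of $x$ the convergence is uniform; continuity of the limit follows from uniform convergence of continuous functions.

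For (ii) and the smoothness assertion in (i), I would differentiate the inverse-transform representations $k$ times under the integral sign, which introduces a factor $(i\omega)^k$ into the multiplier. The identical estimate now dominates the integrand by a constant multiple of $|\omega|^{k+p-m}$, which is integrable precisely when $k+p-m<-1$, i.e.\ when $k<m-p-1$. This simultaneously legitimizes the differentiation under the integral (so that $f_p^\theta\in C^k$ with $\tfrac{d^k}{dx^k}f_p^\theta$ equal to the inverse transform of $(i\omega)^k c\bigl(s^p\pm(-s)^p\bigr)F(s)$) and, by repeating the dominated-convergence argument above with the extra $|\omega|^k$ weight, yields the uniform convergence $\tfrac{d^k}{dx^k}\phi_p^\theta(x,\rho)\to\tfrac{d^k}{dx^k}f_p^\theta(x)$.

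The main obstacle is purely the integrability bookkeeping at the two ends of the spectrum: producing an integrable, $\rho$-independent majorant requires the decay $|F|\le c|\omega|^{-m}$ to beat the polynomial growth $|\omega|^{p}$ (respectively $|\omega|^{k+p}$) at infinity---this is exactly the role of the inequalities $m>p+1$ and $k<m-p-1$---while simultaneously keeping the factor $|\omega|^p$ integrable at the origin, which is why hypotheses (i) and (ii) of Lemma~\ref{lem2.1} are imposed and why the boundary case $p=0$ must be split off by hand. Once a single dominating function is secured, both the passage to the limit and the differentiation under the integral are routine applications of dominated convergence.
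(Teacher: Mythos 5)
Your treatment of (i) and (ii) follows essentially the same route as the paper: both arguments live entirely on the Fourier side, define $f_p^\theta$ as the inverse transform of $c\bigl(s^p\pm(-s)^p\bigr)F(s)$, isolate the $\rho$-dependence in the factor $e^{s^2/(2\rho^2)}-1$, and use the decay estimate of Proposition~\ref{prop:2.1} to control everything; where the paper runs an explicit $\epsilon$-splitting into a compact frequency band $[-iT,iT]$ (where $|1-e^{s^2/(2\rho^2)}|$ is made uniformly small) plus integrable tails, you invoke dominated convergence, which is the same argument in different packaging, and your handling of (ii) by inserting the factor $(i\omega)^k$ is exactly the paper's. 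One slip to fix: the single majorant $C|\omega|^{p-m}$ you display is \emph{not} integrable at the origin (since $p-m<-1$); the correct $\rho$- and $x$-independent majorant is $C\min\bigl(|\omega|^{p},\,|\omega|^{p-m}\bigr)$, using boundedness of $F$ near $\omega=0$ and Proposition~\ref{prop:2.1} only at infinity. Your closing paragraph shows you understand this, but the claim as written is false.

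The genuine gap is in (iii). It does not hold ``by construction.'' In this paper a Fourier correspondence means a \emph{Fourier pair}: both the direct and the inverse transform formulas must hold, and the paper warns explicitly (just before Theorem~\ref{thm2.1}) that the direct formula does not imply the inverse one. Your construction gives only $f_p^\theta=\mathcal{F}^{-1}\bigl(c(s^p\pm(-s)^p)F(s)\bigr)$; it remains to prove $\mathcal{F}\bigl(f_p^\theta\bigr)=c\bigl(s^p\pm(-s)^p\bigr)F(s)$. The paper closes this direction by observing that, because $x\,f(x)$ is absolutely integrable (the hypothesis imported from Theorem~\ref{thm2.2}), $F(s)$ is continuously differentiable, hence $F(s)\bigl(s^p+(-s)^p\bigr)$ is continuously differentiable and (by Proposition~\ref{prop:2.1}) absolutely integrable, so the Fourier inversion theorem applies and yields the forward formula. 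Your proposal never uses the $x\,f(x)$ integrability hypothesis at all, which is a sign that this step is missing; without it, (iii) is an assertion rather than a proof.
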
 
\begin{proof}
From Lemma~\ref{lem2.1} it follows that for $s=i\,\!w$
\begin{equation}\label{eq:2.12}
  \phi_p^e(x,\rho)={c\over 2\pi i}\int_{-i\infty}^{i\infty} F(s)\big(s^p+(-s)^p\big)e^{s^2/(2\rho^2)}e^{sx}\,ds\, .
\end{equation}
Since $m>p+1$, inequality (\ref{eq:2.8}) implies absolute convergence of the integral  
\begin{equation}\label{eq:2.13}
  f_p^e(x)={c\over 2\pi i}\int_{-i\infty}^{i\infty} F(s)\big(s^p+(-s)^p\big)e^{sx}\,ds\, 
\end{equation}
for all $x\in(-\infty,\infty)$. Let us prove that uniformly on $(-\infty,\infty)$
\begin{equation}\label{eq:2.14}
  f_p^e(x)=\lim_{y\to\infty}\phi _p^e(x,\rho)\, .
\end{equation}
Let $\epsilon>0$. Since $m>p+1$, inequality (\ref{eq:2.8}) implies that $\exists T>0$ such that   
\begin{equation}\label{eq:2.15}
  \Big|{c\over 2\pi i}\Big|\Big(\int_{i\,\!T}^{i\infty}+\int_{-i\infty}^{-i\,\!T}\Big)  \Big|F(s)\big(s^p+(-s)^p\big)\,ds\Big|<{\epsilon \over 2}\, .
\end{equation}
Let us define $\mu$ by
\begin{equation}\label{eq:2.16}
  \mu=\sup \Big\{\ \Big|{c\over 2\pi i}F(s)\big(s^p+(-s)^p\big)\Big| \ :\ s\in [-i\,\!T,i\,\!T]\ \Big\}\, .
\end{equation}   
Then, since $s=i\,\!\omega$ there exists $\rho_0$ such that  
\begin{equation} \label{eq:2.17}
  \forall \rho\ge \rho_0\quad \big|1-e^{s^2/(2\rho^2)}\big|< {\epsilon\pi \over 2\mu T c}\, .
\end{equation}
Formulas (\ref{eq:2.12}), (\ref{eq:2.13}), (\ref{eq:2.15})-(\ref{eq:2.17}) imply that for any $x$ and for any $\rho\ge \rho_0$
\begin{align}
 & \big|f_p^e(x)-\phi_p^e(x,\rho)\big| \leq \big|{c\over 2\pi i}\big|  \big| \int_{-i\infty}^{i\infty} \big|\big|F(s)\big(s^p+(-s)^p\big)  e^{sx} \big|1-e^{s^2/(2\rho^2)}\big|    \big|ds\big|\, \notag \\
& \leq \Big|{c\over 2\pi i}\Big|\Big(\int_{i\,\!T}^{i\infty}+\int_{-i\infty}^{-i\,\!T}\Big)  \Big|F(s)\big(s^p+(-s)^p\big)\,ds\Big|+\Big|{c\over 2\pi i}\Big|\int_{-i\,\!T}^{i\,\!T}{\epsilon\pi \over 2\mu T c}\mu ds  \Big| \notag \\
& < {\epsilon \over 2}+{\epsilon \over 2} =\epsilon\, ,\notag
\end{align}
which proves (\ref{eq:2.14})
Therefore, $f_p^e(x)$ is continuous. The fact that $f_p^e(x)$ is $k$ times continuously differentiable follows from the fact that $m>p+1$ from (\ref{eq:2.8}), and the theorem on  differentiability of improper integrals by parameter (see [5]). This proves (i) for $\theta=e$. The proof for (ii) is entirely similar to the proof of (i) except that we need to multiply the functions under the integral by $s^k$, and we still use (\ref{eq:2.8}) to justify absolute convergence. Let us prove (iii).  Since $x\,\!f(x)$ is uniformly convergent, $F(s)$ is continuously differentiable in $s$. Therefore, $F(s)\big(s^p+(-s)^p\big)$ is also continuously differentiable in $s$. Absolute integrability of $F(s)\big(s^p+(-s)^p\big)$ again follows from (\ref{eq:2.8}). From (\ref{eq:2.13}) it follows that $f_p^e(-x)=\mathcal{F}^{-1}\big(c\,\!F(s)(s^p+(-s)^p)\big)$. Therefore, from the theorem on the inverse Fourier transform, it follows that $\mathcal{F}\big(f_p^e(x)\big)=c\big(s^p+(-s)^p\big)F(s)$. This proves (iii) for $\theta=e$. The proof for $\theta=o$ is entirely  similar.  
\end{proof}
Theorem 2.3 shows that as $\rho\to \infty$, the complexity of the smoothed signal increases and the uniform limit of the convolution $\phi_p^e(x,\rho)$ is the function $f_p^\theta(x)$. On the other hand, if $\rho\to 0$ (smoothing increases), the uniform limit also exists and is zero. This is shown by the following result.  
\begin{theorem}\label{thm2.4}
 If the conditions of Theorem 2.3 are satisfied, then \newline
\medskip
 
(i) Functions $\phi_p^e(x,\rho)$, $\theta=e,o$ uniformly converge as $\rho\to 0$ to an identically zero function.\newline
\medskip

(ii) Also, for every $k$, $0<k<m-p-1$, functions ${d^k\,\!\phi_p^e(x,\rho) \over dx^k}$, $\theta=e,o$ also converge uniformly to zero.   
\end{theorem}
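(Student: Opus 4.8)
The plan is to reuse the Fourier integral representation of $\phi_p^\theta(x,\rho)$ already established in Lemma~\ref{lem2.1} and in the proof of Theorem~\ref{thm2.3}, and to exploit the fact that on the imaginary axis $s=i\omega$ the factor $e^{s^2/(2\rho^2)}=e^{-\omega^2/(2\rho^2)}$ is bounded by $1$ and tends to $0$ pointwise for every $\omega\neq0$ as $\rho\to0$. For the even case I would start from
\begin{equation*}
  \phi_p^e(x,\rho)={c\over 2\pi i}\int_{-i\infty}^{i\infty} F(s)\big(s^p+(-s)^p\big)e^{s^2/(2\rho^2)}e^{sx}\,ds .
\end{equation*}
The first step is to pass to an estimate that no longer involves $x$. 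Since $|e^{sx}|=1$ and $|e^{s^2/(2\rho^2)}|=e^{-\omega^2/(2\rho^2)}\le1$ on the imaginary axis,
\begin{equation*}
  \big|\phi_p^e(x,\rho)\big|\le {|c|\over 2\pi}\int_{-\infty}^{\infty} \big|F(i\omega)\big|\,\big|(i\omega)^p+(-i\omega)^p\big|\,e^{-\omega^2/(2\rho^2)}\,d\omega ,
\end{equation*}
and the right-hand side is independent of $x$. This is precisely the step that makes the resulting convergence automatically uniform in $x$; all that remains is to show the integral vanishes as $\rho\to0$.

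Next I would invoke the absolute integrability already proved in Theorem~\ref{thm2.3}: under the stated hypotheses the function $g(\omega):=\big|F(i\omega)\big|\,\big|(i\omega)^p+(-i\omega)^p\big|$ is integrable on $(-\infty,\infty)$, the tail being controlled by (\ref{eq:2.8}) together with $m>p+1$ and the behaviour near $\omega=0$ being that of $|\omega|^p$. Since $e^{-\omega^2/(2\rho^2)}\to0$ pointwise for each $\omega\neq0$ and is dominated by the integrable function $g$, dominated convergence gives $\int g(\omega)e^{-\omega^2/(2\rho^2)}\,d\omega\to0$. If one prefers to mirror the $\epsilon$-style argument of Theorem~\ref{thm2.3}, the same conclusion follows by splitting the integral at a small $\delta>0$: on $|\omega|<\delta$ the integral of $g$ is small by absolute continuity of the integral, while on $|\omega|\ge\delta$ the factor $e^{-\omega^2/(2\rho^2)}\le e^{-\delta^2/(2\rho^2)}\to0$. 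Either way $\phi_p^e(x,\rho)\to0$ uniformly in $x$, which proves (i) for $\theta=e$; the odd case is identical with $s^p+(-s)^p$ replaced by $s^p-(-s)^p$.

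Part (ii) then follows by the same scheme applied to ${d^k\phi_p^e/dx^k}$, whose Fourier integrand is the previous one multiplied by $s^k=(i\omega)^k$. The only additional point is that $g(\omega)|\omega|^k$ must still be integrable, and this is exactly where the hypothesis $0<k<m-p-1$ enters: by (\ref{eq:2.8}) the integrand decays like $|\omega|^{p+k-m}$ at infinity, which is integrable precisely when $k<m-p-1$. With the same $x$-independent bound and the same dominated-convergence (or split) argument, the $k$-th derivative converges to $0$ uniformly.

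I expect the only genuine obstacle to be bookkeeping rather than analysis: confirming the absolute integrability of the dominating functions $g(\omega)$ and $g(\omega)|\omega|^k$, both near $\omega=0$ and at infinity, under the precise hypotheses inherited from Theorems~\ref{thm2.2} and~\ref{thm2.3}. Once that integrability is in place, the vanishing of $e^{-\omega^2/(2\rho^2)}$ does all the real work, and uniformity in $x$ comes for free because the controlling bound contains no $x$-dependence.
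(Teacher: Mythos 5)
Your proposal is correct, and its skeleton is the same as the paper's: both start from the Fourier representation (\ref{eq:2.12}), use $|e^{sx}|=1$ to get a bound independent of $x$ (whence the uniformity), split the line of integration at $\pm iT$, and control the tails using the absolute integrability coming from (\ref{eq:2.8}) and $m>p+1$ together with $|e^{s^2/(2\rho^2)}|\le 1$. The genuine difference is how the compact middle piece $[-iT,iT]$ is treated, and there your version is the more careful one. The paper claims in (\ref{eq:2.20}) that for all sufficiently small $\rho$ the factor $|e^{s^2/(2\rho^2)}|=e^{-\omega^2/(2\rho^2)}$ is bounded by $\epsilon/(4\mu T)$ uniformly on $[-iT,iT]$; that is false at $\omega=0$, where this factor equals $1$ for every $\rho>0$, so the paper's middle estimate does not hold as written. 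Your two substitutes --- dominated convergence with the integrable majorant $g(\omega)=|F(i\omega)|\,|(i\omega)^p+(-i\omega)^p|$, or the additional split at $|\omega|=\delta$, with the piece $|\omega|<\delta$ made small by absolute continuity of the integral of $g$ and the piece $\delta\le|\omega|\le T$ killed by $e^{-\delta^2/(2\rho^2)}\to 0$ --- are exactly the repair needed: the exceptional point $\omega=0$ has measure zero, so the integral still vanishes in the limit even though the integrand does not vanish there. Your bookkeeping for part (ii) is also right: the extra factor $|\omega|^k$ gives decay of order $|\omega|^{p+k-m}$ at infinity by (\ref{eq:2.8}), which is integrable precisely when $k<m-p-1$, matching the hypothesis of Theorem~\ref{thm2.4}. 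So: same route as the paper, but your treatment of the neighborhood of $\omega=0$ closes a real gap in the paper's own argument rather than merely paraphrasing it.
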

\begin{proof}
Arguments as in the proof of Theorem~\ref{thm2.3} show that $\phi_p^e(x,\rho)$ is given by (\ref{eq:2.12}) and that the function $F(s)\big(s^p+(-s)^p\big)$ converges absolutely on  $(-i\infty,i\infty )$. Let $\epsilon>0$. Then from (\ref{eq:2.12}) and the fact that $e^{s^2/(2\rho^2)}<1$, it follows that $\exists T>0$ such that $\forall y>0$, we have 
\begin{equation}\label{eq:2.19}
\Big|{c\over 2\pi i}\Big|\Big(\int_{i\,\!T}^{i\infty}+\int_{-i\infty}^{-i\,\!T}\Big)  \big|F(s)e^{s^2/(2\rho^2)}e^{sx}\big(s^p+(-s)^p\big)\,ds\big| < {\epsilon \over 2}\, .
\end{equation}
Let $µ$ be defined by (\ref{eq:2.12}). Then, $\exists \rho_0>0$ such that for any $\rho<\rho_0$,
\begin{equation}\label{eq:2.20}
  \big|e^{s^2/(2\rho^2)} \big| < {\epsilon \over 4\mu T}\, .
\end{equation} 
Then (\ref{eq:2.12}), (\ref{eq:2.19}) and (\ref{eq:2.20}) imply that for $\rho<\rho_0$: 
\begin{align}
  & |\phi_p^e(x,\rho)|\ \leq\ \Big|{c\over 2\pi i}\Big(\int_{i\,\!T}^{i\infty}+\int_{-i\infty}^{-i\,\!T}\Big)K(s)\,ds\Big|+\Big|{c\over 2\pi i}\Big|\,\Big|\int_{-i\,\!T}^{i\,\!T}K(s)\,ds\Big|\ < \notag \\
& {\epsilon \over 2}\,+\,{\epsilon \over 4\mu T}\Big|{c\over 2\pi i}\Big|\,\Big|\int_{-i\,\!T}^{i\,\!T}\big|F(s)\big(s^p+(-s)^p\big)e^{sx}\big|\,ds\Big| \leq \notag \\
&{\epsilon \over 2}\,+\,\big({\epsilon \over 4\mu T}\big)\,2\mu T\ =\ \epsilon\, , \notag
\end{align}
where $K(s)=F(s)e^{s^2/(2\rho^2)}e^{sx}\big(s^p+(-s)^p\big)$. This proves (i) for $\theta=e$. If $\theta=o$ the proof is similar. The proof of (ii) is entirely similar also.  
\end{proof}
Theorems~\ref{thm2.3} and \ref{thm2.4} show the limiting values for $\phi_p^e(x,\rho)$ as $\rho\to \infty$ or $\rho\to 0$. The following theorem establishes the limit as $\rho\to \rho_0$ where $\rho_0$ is any positive number.   
\begin{theorem}\label{thm2.5}
If the conditions of Theorem~\ref{thm2.3} are satisfied, then  we have the  following convergence (uniformly)
$$
\lim_{y\to \rho_0}{d^k\,\!\phi_p^\theta(x,\rho) \over dx^k}= {d^k\,\!\phi_p^\theta(x,\rho_0) \over dx^k}\, ,
$$ 
where $0\leq k<m-p-1$, $\theta=e,o$. 
\end{theorem}
\begin{proof}
The proof is a simplified version of that of Theorem~\ref{thm2.3}, and is therefore omitted.   
\end{proof}
The following theorem is important in showing that functions $\phi_p^\theta(x,\rho)$ are essentially localized in a domain with bounded $x$ and $\rho$.  
\begin{theorem}\label{thm2.6}
Let the conditions of Theorem~\ref{thm2.3} be satisfied and let $\epsilon>0$. Then there exist $T>0$ and $Q>0$ such that if $|x|>T$ or if $\rho<Q$, then
\begin{align}
 |\phi_p^\theta(x,\rho)|&<\epsilon\, ,\notag \\
 {d^k\,\!\phi_p^\theta(x,\rho) \over dx^k}&<\epsilon\, .\label{eq:2.21}
\end{align}
\end{theorem}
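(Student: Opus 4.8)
The plan is to treat the two regions $\{\rho<Q\}$ and $\{|x|>T\}$ separately, since they are governed by entirely different mechanisms. For the small-scale region, everything is already contained in Theorem~\ref{thm2.4}: parts (i) and (ii) there give uniform convergence of $\phi_p^\theta(x,\rho)$ and of $d^k\phi_p^\theta/dx^k$ to zero as $\rho\to 0$, so one simply reads off a single $Q>0$ for which both bounds in (\ref{eq:2.21}) hold for every $x$ whenever $\rho<Q$. No decay in $x$ is needed here.

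The real content is the region $|x|>T$, where the estimate must hold uniformly for all $\rho$, in particular over the whole unbounded range $\rho\ge Q$. Here I would start from the Fourier representation established in Lemma~\ref{lem2.1} and used in (\ref{eq:2.12}): writing $s=i\omega$,
\[
\frac{d^k\phi_p^e(x,\rho)}{dx^k}=\frac{c}{2\pi i}\int_{-i\infty}^{i\infty} s^k F(s)\bigl(s^p+(-s)^p\bigr)e^{s^2/(2\rho^2)}e^{sx}\,ds=\int_{-\infty}^\infty g_\rho(\omega)\,e^{i\omega x}\,d\omega,
\]
where $g_\rho(\omega)=c'\,(i\omega)^k F(i\omega)\bigl((i\omega)^p+(-i\omega)^p\bigr)e^{-\omega^2/(2\rho^2)}$. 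By Proposition~\ref{prop:2.1} and (\ref{eq:2.8}), the hypothesis $k<m-p-1$ (i.e.\ $m>p+k+1$) makes the $\rho$-independent majorant $G_k(\omega):=c'\,(i\omega)^kF(i\omega)\bigl((i\omega)^p+(-i\omega)^p\bigr)$ absolutely integrable, since it decays like $|\omega|^{k+p-m}$ at infinity with $k+p-m<-1$ and is bounded near $0$. Thus each $g_\rho$ is dominated in modulus by the fixed $L^1$ function $|G_k|$, and $d^k\phi_p^e/dx^k$ is, up to a constant, the Fourier transform of $g_\rho$.

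The bound for $|x|>T$ is then a uniform Riemann--Lebesgue statement: I must show $\sup_{\rho\ge Q}\bigl|\int g_\rho(\omega)e^{i\omega x}\,d\omega\bigr|\to0$ as $|x|\to\infty$. Riemann--Lebesgue applied scale-by-scale does not suffice, because $\rho$ ranges over an unbounded set and, as $\rho\to\infty$, $g_\rho\to G_k$ with $\widehat{G_k}$ proportional to the nonzero limit $f_p^e$ of Theorem~\ref{thm2.3}. The clean way to get uniformity is to observe that $\rho\mapsto g_\rho$ extends continuously to the compact interval $[Q,\infty]$ (setting $g_\infty=G_k$): at any finite $\rho_0$, and as $\rho\to\infty$, dominated convergence with majorant $2|G_k|$ gives $\|g_\rho-g_{\rho_0}\|_1\to0$ and $\|g_\rho-G_k\|_1\to0$. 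Hence $\{g_\rho:Q\le\rho\le\infty\}$ is a compact, in particular totally bounded, subset of $L^1(\mathbb R)$, and Riemann--Lebesgue is uniform over any such set. Concretely one splits $[Q,\infty)=[Q,R]\cup[R,\infty)$: on the tail $\|g_\rho-G_k\|_1\le\|g_R-G_k\|_1$ is small and monotone in $\rho$, reducing matters to the single transform $\widehat{G_k}$, which vanishes at infinity by ordinary Riemann--Lebesgue; on the compact part total boundedness lets one cover the family by finitely many $L^1$-balls and apply Riemann--Lebesgue to their finitely many centres. This produces one $T>0$ with $|d^k\phi_p^e/dx^k|<\epsilon$ for all $\rho\ge Q$ once $|x|>T$; the case $k=0$ gives the first line of (\ref{eq:2.21}), and $\rho<Q$ is already covered above. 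The odd kernel $\theta=o$ is identical after replacing $\bigl(s^p+(-s)^p\bigr)$ by $\bigl(s^p-(-s)^p\bigr)$.

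I expect the main obstacle to be exactly this uniformity over the unbounded scale range: one must control the transition $\rho\to\infty$, where the integrand loses its Gaussian damping and $\phi_p^\theta$ approaches the nonvanishing limit $f_p^\theta$, so the decay in $x$ has to be extracted uniformly rather than at each fixed scale.
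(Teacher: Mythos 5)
Your proof is correct, but it runs along a genuinely different track than the paper's for the hard region $\rho\ge Q$. The paper stays entirely in $x$-space and recycles its own earlier theorems as black boxes: it first notes (via absolute integrability of $\phi_p^\theta$ and of $f'(x)*\rho^{p+1}\Lambda_p^\theta$) that $\phi_p^\theta(x,\rho)\to0$ as $|x|\to\infty$ for each fixed $\rho$; then it splits $[Q,\infty)$ at some $R$, handling $\rho>R$ by the uniform convergence $\phi_p^\theta\to f_p^\theta$ of Theorem~\ref{thm2.3} together with the spatial decay of the limit function $f_p^\theta$, and handling the compact middle interval $[Q,R]$ by combining the pointwise decay in $x$ with the uniform continuity in $\rho$ of Theorem~\ref{thm2.5}, extracting a finite subcover of $[Q,R]$ by neighborhoods $O_\rho$ and taking the maximum of the resulting thresholds $T(\rho)$. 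You instead work on the Fourier side: you compactify the scale range to $[Q,\infty]$, show $\rho\mapsto g_\rho$ is continuous into $L^1$ by dominated convergence (with the $\rho$-independent majorant $G_k$, whose integrability is exactly the condition $k<m-p-1$), and invoke uniformity of Riemann--Lebesgue over $L^1$-compact families. The two arguments share the same compactness skeleton --- your tail estimate $\|g_\rho-G_k\|_1$ plays the role of the paper's Theorem~\ref{thm2.3} plus decay of $f_p^\theta$, and your finite cover by $L^1$-balls plays the role of the paper's finite subcover of $[Q,R]$ --- but the key mechanisms differ: you never need Theorem~\ref{thm2.5} or the real-space decay statement (\ref{eq:2.22}), replacing both with a single uniform Riemann--Lebesgue lemma. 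What this buys you is a more unified and portable argument (it works verbatim for any kernel family whose Fourier integrands form an $L^1$-compact set, in the spirit of Theorem~\ref{thm3.4}); what the paper's route buys is economy within its own framework, since Theorems~\ref{thm2.3}--\ref{thm2.5} were already proved and no separate functional-analytic lemma about uniform decay of Fourier transforms is required. One small point of care in your write-up: the monotonicity claim $\|g_\rho-G_k\|_1\le\|g_R-G_k\|_1$ for $\rho\ge R$ does hold, since $1-e^{-\omega^2/(2\rho^2)}$ is pointwise decreasing in $\rho$, so your tail reduction is sound.
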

\begin{proof}
 Since $m>p+1>0$, it follows from the conditions of our theorem that the function $f^\prime(x)$ is absolutely integrable. Using arguments similar to those of Lemma~\ref{lem2.1}, it is easy to show that the function $f^\prime(x)*\rho^{p+1}\Lambda_p^\theta(x)$ is absolutely integrable. This implies that for every $\rho$
\begin{equation}\label{eq:2.22}
   \lim_{|x|\to \infty}\phi_p^\theta(x,\rho)=0\, .
\end{equation}
Similarly, we can get  
\begin{equation} \label{eq:2.23}
    \lim_{|x|\to \infty}{d^k\,\!\phi_p^\theta(x,\rho) \over dx^k}=0\, \quad {\rm for\ }\ 0<k<m-p-2\, .
\end{equation}
Let $\epsilon>0$. From (\ref{eq:2.22}) and (\ref{eq:2.23}) and Theorems~\ref{thm2.3} and \ref{thm2.4}, it follows that there exist positive numbers $R$ and $Q$ such that
\begin{equation}\label{eq:2.24}
   \big|f_p^e(x)-\phi_p^\theta(x,\rho)\big| <{\epsilon \over 2}\quad \forall y>R,\ \forall x\, ,
\end{equation}
and
\begin{equation}\label{eq:2.25}
 \big| \phi_p^\theta(x,\rho)\big| < \epsilon \quad \forall y>R,\ \forall x,\ \forall y<Q\, . 
\end{equation} 
Since ${d\,\!f_p^\theta(x,\rho) \over dx}$ is absolutely integrable, $\lim_{|x|\to \infty}f_p^\theta(x,\rho)=0$. This implies that $\exists T_1$ such that 
\begin{equation}\label{eq:2.26}
  \big|f_p^\theta(x,\rho)\big| < {\epsilon \over 2}\quad \forall \forall |x|>T_1\, .
\end{equation}
Inequalities (\ref{eq:2.24}) and (\ref{eq:2.26}) imply that  
\begin{equation}\label{eq:2.27}
  \big| \phi_p^\theta(x,\rho')\big| < \epsilon \quad \forall y>R,\ \forall |x|>T_1\, . 
\end{equation}
Using (\ref{eq:2.22}) and Theorem~\ref{thm2.5}  we can show that for every $\rho\in [Q,R]$ there exists an open neighborhood $O_y$ such that for some positive number $T(\rho)$
\begin{equation}\label{eq:2.28}
  \big| \phi_p^\theta(x,\rho)\big| < \epsilon \quad \forall y' \in O_y\ {\rm and\ } |x|>T(\rho)\, . 
\end{equation}
Since $[Q,R]$ is compact, there exists a finite number of open sets $O_{y_1},\,\dots\,,O_{y_n}$, covering $[Q,R]$. Let $T=\max(\,T_l,\ T(y_i)\, ,\ i = 1,\,\dots\,,n)$, then (\ref{eq:2.25}), (\ref{eq:2.27}) and  (\ref{eq:2.28}) imply that  
$$
 \big| \phi_p^\theta(x,\rho)\big| < \epsilon\ {\rm if\ either\ }|x|>T\ {\rm or\ }y<Q\, .
$$ 
The second condition in (\ref{eq:2.21} is proved similarly.  
\end{proof}
\begin{remark} \label{rem2.5}
The results of this section can be generalized to functions $f(x)$ such that they and their derivatives have a finite number of jump discontinuities.
\end{remark}   
\begin{theorem}\label{thm2.7}
Let $m>p+1+2l$ with $l>0$ and let the conditions of Theorem~\ref{thm2.3} be satisfied. Then there is a function $\Psi_p^\theta(x,\sigma)$, $\theta=e,o$ defined for all real $\sigma$, $\sigma\in(-\infty,\infty)$ such that:
\begin{tabbing}
(iii) \= $\Psi_p^\theta(x,\sigma)=\Psi_p^\theta(x,-\sigma)$. \kill
(i)  \> $\Psi_p^\theta(x,\sigma)=\phi_p^\theta(x,\rho)$ for $\rho>0$, $\rho={1\over\sigma}$, and $\Psi_p^\theta(x,0)=f_p^\theta(x)$. \\
     \> \\
(ii) \> Function $\Psi_p^\theta(x,\sigma)$ has continuous mixed derivatives  ${\partial^{k+n}\over \partial x^k \partial \sigma^n}\Psi_p^\theta(x,\sigma)$ \\
     \> for any $k$  and $n$ such that  $2n+k\leq 2l$. \\
    \> \\
(iii)\> $\Psi_p^\theta(x,\sigma)=\Psi_p^\theta(x,-\sigma)$. \\
    \> \\
(iv) \> $\Psi_p^\theta(x,\sigma)\ \sim\ F(s)\big(s^p+(-s)^p\big)e^{s^2\sigma^2/2}\quad$ \= if $\theta=e$, \ and \\
     \> $\Psi_p^\theta(x,\sigma)\ \sim\ F(s)\big(s^p-(-s)^p\big)e^{s^2\sigma^2/2}\quad$ \> if $\theta=o$ \\
    \> \\
(v)  \>  $\sigma\,{\partial^2\,\Psi_p^\theta(x,\sigma)\over \partial x^2}$ for every $x$ and $\sigma$ \\
\end{tabbing}
\end{theorem}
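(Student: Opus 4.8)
The plan is to define $\Psi_p^\theta$ directly by an inverse Fourier integral in which the scale parameter enters only through $\sigma^2$, and then read off each of (i)--(v) from properties of that integral. Motivated by Lemma~\ref{lem2.1}(b) and the substitution $\sigma=1/\rho$ (so that $s^2/(2\rho^2)=s^2\sigma^2/2$), I would set, for $\theta=e$,
$$\Psi_p^e(x,\sigma)=\frac{c}{2\pi i}\int_{-i\infty}^{i\infty}F(s)\big(s^p+(-s)^p\big)e^{s^2\sigma^2/2}e^{sx}\,ds,$$
and the analogous expression with $s^p-(-s)^p$ for $\theta=o$; the proof for $\theta=o$ is then entirely parallel. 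On the imaginary axis $s=i\omega$ one has $\big|e^{s^2\sigma^2/2}\big|=e^{-\omega^2\sigma^2/2}\le 1$, so this integral converges at least as well as the $\sigma=0$ integral (\ref{eq:2.13}), which already converges absolutely by (\ref{eq:2.8}) and $m>p+1$. Statement (iii) is immediate, since the integrand depends on $\sigma$ only through $\sigma^2$.

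Next I would verify (i). For $\sigma>0$ put $\rho=1/\sigma$; the integral becomes exactly (\ref{eq:2.12}), so $\Psi_p^\theta(x,\sigma)=\phi_p^\theta(x,\rho)$. Setting $\sigma=0$ collapses the exponential factor to $1$ and the integral becomes (\ref{eq:2.13}), whose value is $f_p^\theta(x)$ by Theorem~\ref{thm2.3}; for $\sigma<0$ the value is then fixed by (iii). This also gives (iv) essentially by construction: to promote the formal correspondence to a genuine Fourier pair I would argue as in the proof of Theorem~\ref{thm2.3}(iii), using that $x\,f(x)$ is absolutely integrable (a hypothesis inherited from Theorem~\ref{thm2.2}), so that $F(s)$ is continuously differentiable, hence $F(s)\big(s^p\pm(-s)^p\big)e^{s^2\sigma^2/2}$ is continuously differentiable and, by (\ref{eq:2.8}), absolutely integrable, and the inversion theorem of [4] applies.

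The heart of the argument is (ii), and this is where the hypothesis $m>p+1+2l$ is used. I would differentiate under the integral sign, observing that each $\partial/\partial x$ multiplies the integrand by $s$, while $\partial^n/\partial\sigma^n$ applied to $e^{s^2\sigma^2/2}$ produces $e^{s^2\sigma^2/2}$ times a polynomial in $\sigma$ whose highest power of $s$ is $s^{2n}$ (by induction: $\partial_\sigma e^{s^2\sigma^2/2}=s^2\sigma\,e^{s^2\sigma^2/2}$, and each further $\sigma$-derivative raises the top $s$-degree by $2$). Hence the integrand of $\partial^{\,k+n}/(\partial x^k\partial\sigma^n)\,\Psi$ is, in absolute value, bounded for large $|s|$ by a constant times $|s|^{\,p+k+2n}|F(s)|\le c\,|s|^{\,p+k+2n-m}$. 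Since $2n+k\le 2l<m-p-1$, the exponent satisfies $p+k+2n-m<-1$, so each such integral converges absolutely; together with $\big|e^{s^2\sigma^2/2}\big|\le 1$ and $|e^{sx}|=1$, the convergence is uniform for $\sigma$ in any bounded interval and all $x$. The theorem on differentiation of improper integrals with respect to a parameter [5] then legitimises the interchange and yields continuity of the mixed derivatives for $2n+k\le 2l$.

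Finally (v) is the diffusion identity $\partial\Psi_p^\theta/\partial\sigma=\sigma\,\partial^2\Psi_p^\theta/\partial x^2$, which follows by comparing the two sides after differentiating under the integral: $\partial_\sigma$ brings down the factor $s^2\sigma$ from $e^{s^2\sigma^2/2}$, while $\sigma\,\partial_x^2$ brings down $\sigma s^2$ from $e^{sx}$, so the two integrands coincide. Both derivatives exist because they require only $2n+k=2\le 2l$, which holds whenever $l\ge 1$. The only genuine obstacle is the bookkeeping in (ii): one must track the exact top $s$-degree generated by repeated $\sigma$-differentiation and confirm uniform, not merely pointwise, convergence near every $(x,\sigma)$, including $\sigma=0$ where the exponential damping is weakest; everything else is a direct transcription of the estimates already established for Theorems~\ref{thm2.3} and \ref{thm2.4}.
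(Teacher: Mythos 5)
Your proposal is correct and follows essentially the same route as the paper's own proof: both define $\Psi_p^\theta$ by the inverse Fourier integral (\ref{eq:2.29}), read off (i), (iii), (iv) from Lemma~\ref{lem2.1}, Theorem~\ref{thm2.3} and the Fourier inversion theorem, and prove (ii) and (v) by differentiating under the integral sign with the same bookkeeping (the $\sigma$-derivatives produce a polynomial whose $s$-degree is at most $2n$, so $m>p+1+2l$ and $2n+k\le 2l$ give absolute convergence). You also correctly reconstruct the truncated statement (v) as the identity $\Psi_\sigma=\sigma\,\Psi_{xx}$, which is exactly how the paper invokes it later (e.g.\ in Lemma~\ref{lem3.2}).
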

\begin{proof}
Let $\theta=e$. Then we define $\Psi_p^\theta(x,\sigma)$ by
\begin{equation}\label{eq:2.29}
  \Psi_p^\theta(x,\sigma)={c\over 2\pi i}\int_{-i\infty}^{i\infty} F(s)\big(s^p+(-s)^p\big)e^{s^2\sigma^2/2}e^{sx}\,ds\, .
\end{equation}
where $F(s)=\mathcal{F}\left(f(x)\right)$, $s = i\,\!\omega$. As in the proof of Theorem~\ref{thm2.3}, one can show that  $F(s)\big(s^p+(-s)^p\big)e^{s^2\sigma^2/2}$  is indeed the Fourier transform of $\Psi_p^\theta(x,\sigma)$ for $\theta=e$. This proves (iv). (iii) is obvious. (i) follows from (\ref{eq:2.29}), Lemma~\ref{lem2.1}(b), and Theorem~\ref{thm2.3}. 

Let us prove  (ii). From the theorem on differentiation of improper integrals by a parameter (see [5], 11.55 a), we can interchange differentiation of the right hand side of (\ref{eq:2.30}) with integration if the derivative of the expression under the integral absolutely converges. 
However,
\begin{equation}\label{eq:2.30}
  {\partial^{k+n}\over \partial x^k \partial \sigma^n}\Big(F(s)\big(s^p+(-s)^p\big)e^{s^2\sigma^2/2}e^{sx} \Big)=s^kp(s,\sigma)e^{s^2\sigma^2/2}e^{sx}\big(F(s)\big(s^p+(-s)^p\big)\big)\,
\end{equation}
where $p(s,\sigma)$ is a polynomial whose degree on $s$ is less than or equal to $2n$.
 
Now (\ref{eq:2.30}) and (\ref{eq:2.30}) together with inequality $m>p+1+2l$ imply that if $2n+k\leq 2l$, the integral in (\ref{eq:2.29}) absolutely converges. This allows us to interchange integration and differentiation. Since the derivatives of all orders of the expression under the integral in (\ref{eq:2.29}) exist, (ii) follows. The proof for $\theta=o$ is entirely similar. In order to prove (v), we simply differentiate (\ref{eq:2.29}).   
\end{proof}
\begin{theorem}\label{thm2.8}
Let the conditions of Theorem~\ref{thm2.3} be satisfied  (in particular, $m>p+1$). Then if $\Delta>0$ is such that $p-\Delta> 0$, $p+\Delta+1<m$, then convergence in each of Theorems~\ref{thm2.3}, \ref{thm2.4} and \ref{thm2.5} is uniform with respect to $p_1\in[\,p-\Delta,\,p+\Delta\,]$. In other words, $\forall\epsilon>0$ $\exists \rho_0$ such that if $\rho>\rho_0$, then $|\phi_{p_1}^\theta(x,\rho)-f_{p_1}^\theta(x,\rho)|<\epsilon$, $\forall p_1 \in [\,p-\Delta,\,p+\Delta\,]$ (and similarly for convergences $\phi_{p_1}^\theta(x,\rho) \rightarrow \phi_{p_1}^\theta(x,\rho_1)$ as  $\rho\rightarrow y_1$ and $\phi_{p_1}^\theta(x,\rho) \rightarrow 0$ as $\rho \rightarrow0$ for $p_1 \in [\,p-\Delta,\,p+\Delta\,]$.   
\end{theorem}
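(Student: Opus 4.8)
The plan is to revisit the three convergence arguments of Theorems~\ref{thm2.3}, \ref{thm2.4} and \ref{thm2.5} and to check that, at each step, the thresholds $T$, the supremum $\mu$, and the radius $\rho_0$ (or $Q$) can be chosen independently of the parameter $p_1$ as it ranges over the compact interval $[p-\Delta,p+\Delta]$. All three proofs rest on the integral representation (\ref{eq:2.12}) (and its odd analogue) together with the splitting of the contour $(-i\infty,i\infty)$ into a tail $|\omega|>T$ and a central segment $[-iT,iT]$. The only place where $p_1$ enters the integrand is through the factor $s^{p_1}\pm(-s)^{p_1}$, and for $s=i\omega$ the computation behind Theorem~\ref{thm2.1} shows that its modulus is at most $2|\omega|^{p_1}$, the $p_1$-dependent constant $2|\cos(p_1\pi/2)|$ or $2|\sin(p_1\pi/2)|$ being bounded by $2$ uniformly. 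Thus the whole task reduces to controlling $|\omega|^{p_1}$ uniformly in $p_1$ on each of the two regions.

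First I would handle the tail. For $|\omega|\ge 1$ one has $|\omega|^{p_1}\le|\omega|^{p+\Delta}$ for every $p_1\in[p-\Delta,p+\Delta]$, so Proposition~\ref{prop:2.1} gives, for any $T\ge 1$,
\[
  \int_{|\omega|>T}\bigl|F(i\omega)\bigr|\,|\omega|^{p_1}\,d\omega
  \ \le\ c\int_{|\omega|>T}|\omega|^{\,p+\Delta-m}\,d\omega ,
\]
whose right-hand side is finite and tends to $0$ as $T\to\infty$ precisely because $m>p+\Delta+1$. As this estimate no longer involves $p_1$, a single choice of $T$ forces the tail contribution below $\epsilon/2$ simultaneously for every $p_1$; the accompanying damping factors $|1-e^{s^2/(2\rho^2)}|\le1$ (Theorem~\ref{thm2.3}) and $e^{s^2/(2\rho^2)}\le1$ (Theorem~\ref{thm2.4}) only help.

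Next I would handle the central segment. The function $(\omega,p_1)\mapsto|F(i\omega)|\,|\omega|^{p_1}$ is continuous on the compact set $[-T,T]\times[p-\Delta,p+\Delta]$ --- the only possible trouble, a singularity of $|\omega|^{p_1}$ at $\omega=0$, is ruled out because $p_1\ge p-\Delta>0$ forces $|\omega|^{p_1}\to0$ there --- so it attains a finite maximum $\mu$, which now plays the role of (\ref{eq:2.16}) uniformly in $p_1$. The remaining factor $|1-e^{s^2/(2\rho^2)}|$ (respectively $e^{s^2/(2\rho^2)}$) is independent of $p_1$, so the choice of $\rho_0$ in (\ref{eq:2.17}) (respectively in (\ref{eq:2.20})) made in terms of $\mu$, $T$ and $\epsilon$ alone is automatically uniform. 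Combining the two regions reproduces verbatim the chains of inequalities ending in $<\epsilon$ in Theorems~\ref{thm2.3} and \ref{thm2.4}, now with every constant independent of $p_1$; this is the assertion for $\rho\to\infty$ and $\rho\to0$. The case $\rho\to\rho_1$ (Theorem~\ref{thm2.5}) is identical once the damping factor is replaced by $|e^{s^2/(2\rho^2)}-e^{s^2/(2\rho_1^2)}|$ and its uniform smallness on $[-iT,iT]$ is used. The derivative statements follow by inserting the extra factor $s^k$, which raises the tail exponent to $p+\Delta+k-m$; this stays integrable as long as $m>p+\Delta+k+1$, so the same uniformity holds in the corresponding range of $k$.

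The main obstacle I anticipate is nothing more than this uniform control of $|\omega|^{p_1}$ at the two ends of the frequency axis: near $\omega=0$ it is the hypothesis $p-\Delta>0$ that prevents a blow-up and keeps $\mu$ finite, while near $\omega=\infty$ it is the hypothesis $p+\Delta+1<m$ that keeps the tail integrable. Once the two elementary monotonicity comparisons $|\omega|^{p_1}\le1$ for $|\omega|\le1$ and $|\omega|^{p_1}\le|\omega|^{p+\Delta}$ for $|\omega|\ge1$ are in place, no estimate genuinely new beyond those of Theorems~\ref{thm2.3}--\ref{thm2.5} is needed.
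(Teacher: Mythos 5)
Your proposal is correct and follows exactly the route the paper takes: the paper's proof is the one-line remark that the arguments of Theorems~\ref{thm2.3}, \ref{thm2.4} and \ref{thm2.5} go through with all inequalities made uniform on $[p-\Delta,p+\Delta]$, and your write-up simply supplies the details (tail control via $|\omega|^{p_1}\le|\omega|^{p+\Delta}$ and $m>p+\Delta+1$, central-segment control via compactness and $p-\Delta>0$). Nothing in your argument deviates from or adds to the paper's intended method; it is a faithful, and more explicit, version of the same proof.
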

\begin{proof}
The proof is entirely similar to the proofs of Theorems~\ref{thm2.3}, \ref{thm2.4} and \ref{thm2.5} except that we note that all inequalities can be satisfied uniformly on $[p-\Delta,p+\Delta]$
\end{proof}
\begin{theorem}\label{thm2.9}
  Let the conditions of Theorem~\ref{thm2.3} be satisfied and let $\epsilon>0$. Then $\exists T>0$ and $Q>O$ such that if $|x|>T$ or if $\rho<Q$, then $\forall p_1\in[p-\Delta,p+\Delta]$ where $p-\Delta>0$, $p+\Delta+l<m$ we have  
\begin{align}
& |\phi_{p_1}(x,\rho)|<\epsilon \label{eq:2.31}\\
\Big|{d^k\,\!\phi_{p_1}^\theta(x,\rho) \over dx^k}\Big|& < \epsilon \quad {\rm for\ }  0<k<m-p-2\, .\label{eq:2.32}
\end{align}
\end{theorem}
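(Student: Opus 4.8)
The plan is to recognize Theorem~\ref{thm2.9} as the simultaneous strengthening of Theorem~\ref{thm2.6} (localization in $x$ and in the scale $\rho$) and Theorem~\ref{thm2.8} (uniformity in the order parameter $p_1$). Accordingly, I would reprise the proof of Theorem~\ref{thm2.6} step for step, checking at each estimate that the constants produced can be chosen independently of $p_1\in[p-\Delta,p+\Delta]$. The single device that makes every such check work is a $p_1$-independent, absolutely integrable majorant for the integrand appearing in the inverse-Fourier representation (\ref{eq:2.12}) of $\phi_{p_1}^\theta(x,\rho)$.

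I would first build this majorant. By (\ref{eq:2.8}) and the hypothesis $m>p+\Delta+1$ one has $|F(s)(s^{p_1}\pm(-s)^{p_1})|\le 2c\,|s|^{p_1}/|s|^{m}$ for large $|s|$, while near $s=0$ the same quantity is bounded by $C|s|^{p_1}$ since $F$ is continuous there. Splitting at $|s|=1$ and using $|s|^{p_1}\le|s|^{p+\Delta}$ for $|s|\ge1$ together with $|s|^{p_1}\le|s|^{p-\Delta}$ for $|s|\le1$, and recalling that the Gaussian factor $e^{s^2/(2\rho^2)}=e^{-\omega^2/(2\rho^2)}\le1$ on the imaginary axis, yields a single integrable bound $g(\omega)$ valid for every $\rho>0$ and every $p_1$ in the interval; here $p-\Delta>0$ secures integrability near $\omega=0$ and $p+\Delta+1<m$ secures it at infinity. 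With $g$ in hand, the convergences $\phi_{p_1}^\theta(x,\rho)\to f_{p_1}^\theta(x)$ as $\rho\to\infty$ and $\phi_{p_1}^\theta(x,\rho)\to0$ as $\rho\to0$ are uniform in $p_1$ by Theorem~\ref{thm2.8}; moreover $f_{p_1}^\theta$, being the inverse Fourier transform of the $L^1$ function $F(s)(s^{p_1}\pm(-s)^{p_1})$ majorized by $g$ uniformly in $p_1$, vanishes as $|x|\to\infty$, uniformly in $p_1$ by compactness of $[p-\Delta,p+\Delta]$ and the continuous dependence of this transform on $p_1$. This disposes of the regimes $\rho>R$ and $\rho<Q$ exactly as inequalities (\ref{eq:2.24})--(\ref{eq:2.27}) do, but now with all constants uniform over the interval.

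The remaining and genuinely delicate case is the middle band of scales. In Theorem~\ref{thm2.6} this was handled by covering the compact interval $[Q,R]$ of $\rho$-values by finitely many neighborhoods on which (\ref{eq:2.28}) holds. Here I must instead cover the compact rectangle $[Q,R]\times[p-\Delta,p+\Delta]$ of $(\rho,p_1)$-values. For each fixed $(\rho_*,p_*)$ in the rectangle one has $\phi_{p_*}^\theta(x,\rho_*)\to0$ as $|x|\to\infty$; joint continuity in $(\rho,p_1)$, obtained by combining Theorem~\ref{thm2.5} (continuity in $\rho$) with Theorem~\ref{thm2.8} (uniformity in $p_1$), upgrades this to a statement valid on a whole two-parameter neighborhood of $(\rho_*,p_*)$, since the majorant $g$ controls the high-frequency tail of (\ref{eq:2.12}) uniformly on that neighborhood while a Riemann--Lebesgue argument handles the finite-frequency part. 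Compactness of the rectangle then produces a finite subcover and a single threshold $T$. I expect this two-parameter covering step to be the main obstacle: the one-parameter neighborhood argument of Theorem~\ref{thm2.6} must be re-run jointly in $\rho$ and $p_1$, and the only thing that keeps the tail estimates from degenerating as $p_1$ varies is the uniform majorant constructed above.

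Finally, taking $T=\max\bigl(T_1,\,T(\rho_i,p_i)\bigr)$ over the finite subcover and combining with the $\rho>R$ and $\rho<Q$ bounds gives (\ref{eq:2.31}). The derivative bound (\ref{eq:2.32}) is proved identically after inserting the extra factor $s^{k}$ into (\ref{eq:2.12}); the majorant becomes $|\omega|^{k}g(\omega)$, which remains integrable and $p_1$-independent precisely under the stated restriction $0<k<m-p-2$, so no new ideas are required.
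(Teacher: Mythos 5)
Your proposal is correct and follows essentially the same route as the paper: the paper's proof is a one-line remark that the result follows from Theorem~\ref{thm2.8} exactly as Theorem~\ref{thm2.6} follows from Theorems~\ref{thm2.3}--\ref{thm2.5}, and your argument is precisely that scheme spelled out (uniform-in-$p_1$ limits at $\rho\to 0,\infty$, decay of the limit functions in $x$, and a compactness covering for the intermediate scales, now over the rectangle $[Q,R]\times[p-\Delta,p+\Delta]$). The $p_1$-independent integrable majorant you construct is exactly the device implicitly needed to make the paper's ``entirely similar'' claim rigorous.
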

\begin{proof}
The result is derived from Theorem~\ref{thm2.8} in entirely the same way as the result of Theorem~\ref{thm2.6} is derived from Theorems~\ref{thm2.3}, \ref{thm2.4} and \ref{thm2.5}.
\end{proof}
\begin{theorem}\label{thm2.10}
Let the conditions of Theorem~\ref{thm2.3} be satisfied and let $M=[a,p]$ ($a>0$ if $\theta=o$ and $a\ge 0$ if $\theta=e$) be an interval on the real line. Then the set $D_\epsilon$ defined by condition
\begin{equation}\label{eq:2.33}
  D_\epsilon = \Big\{\ x,\ \sigma\ p_1\ :\  \big| \Psi_{p_2}^\theta(x,\sigma)\big|\ge \epsilon,\ p_1\in [0,p] \ \Big\}
\end{equation}
is compact if $\epsilon>0$.  
\end{theorem}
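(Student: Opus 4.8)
\section*{Proof proposal}

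The plan is to establish compactness of $D_\epsilon\subset\mathbb{R}^3$ (with coordinates $x$, $\sigma$, $p_1$, the parameter ranging over $M=[a,p]$) via the Heine--Borel criterion, by showing $D_\epsilon$ is both bounded and closed. The two ingredients doing the work are the uniform localization of Theorem~\ref{thm2.9} (for boundedness) and the joint continuity of $\Psi_{p_1}^\theta(x,\sigma)$ in all three variables (for closedness).

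For boundedness, the $p_1$-coordinate is automatically confined to the fixed interval $M=[a,p]$. To bound $x$ and $\sigma$ I would invoke Theorem~\ref{thm2.9}: given $\epsilon>0$ it produces $T>0$ and $Q>0$ with $|\phi_{p_1}^\theta(x,\rho)|<\epsilon$ whenever $|x|>T$ or $\rho<Q$, uniformly in $p_1$. Recalling from Theorem~\ref{thm2.7}(i) that $\Psi_{p_1}^\theta(x,\sigma)=\phi_{p_1}^\theta(x,1/\sigma)$ for $\sigma\neq 0$, the condition $|\Psi_{p_1}^\theta(x,\sigma)|\ge\epsilon$ forces $|x|\le T$ and $\rho=1/\sigma\ge Q$, i.e.\ $|\sigma|\le 1/Q$, where the evenness $\Psi_{p_1}^\theta(x,\sigma)=\Psi_{p_1}^\theta(x,-\sigma)$ of Theorem~\ref{thm2.7}(iii) handles negative $\sigma$; small $\sigma$ poses no problem since $\Psi_{p_1}^\theta$ is finite at $\sigma=0$ with value $f_{p_1}^\theta(x)$. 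As Theorem~\ref{thm2.9} is phrased for a symmetric window $[p-\Delta,p+\Delta]$, I would cover the fixed interval $M$ by finitely many such windows (using compactness of $M$) and take the largest $T$ and smallest $Q$, absorbing the endpoint $p_1=0$ permitted when $\theta=e$ into the window containing it. This confines $D_\epsilon$ to the box $\{|x|\le T,\ |\sigma|\le 1/Q,\ p_1\in[a,p]\}$.

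For closedness I would show $(x,\sigma,p_1)\mapsto\Psi_{p_1}^\theta(x,\sigma)$ is continuous on $\mathbb{R}^2\times[a,p]$, so that $D_\epsilon$, the intersection of the preimage of the closed set $[\epsilon,\infty)$ under $|\Psi|$ with the closed slab $\mathbb{R}^2\times[a,p]$, is closed. Continuity in $(x,\sigma)$ is immediate from the continuous mixed derivatives of Theorem~\ref{thm2.7}(ii). For continuity in $p_1$ I would work from the integral representation (\ref{eq:2.29}): the integrand $F(s)\big(s^{p_1}\pm(-s)^{p_1}\big)e^{s^2\sigma^2/2}e^{sx}$ depends continuously on $p_1$, and by Proposition~\ref{prop:2.1} with $m>p+1$ it is dominated, uniformly for $p_1\in[a,p]$, by a fixed integrable function of $s$; dominated convergence then yields joint continuity in $(x,\sigma,p_1)$.

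The main obstacle will be this last point. The delicate part is producing a single integrable majorant for $\big|F(s)\big(s^{p_1}\pm(-s)^{p_1}\big)e^{s^2\sigma^2/2}\big|$ valid simultaneously over $p_1\in[a,p]$ and the bounded $\sigma$-range; here one uses that on the imaginary axis $|s^{p_1}|=|\omega|^{p_1}$ is controlled by $\max(|\omega|^a,|\omega|^p)$, so the decay $|F(s)|\le c|s|^{-m}$ with $m>p+1$ dominates the growth uniformly. A secondary wrinkle is the endpoint behaviour at $p_1=0$ (allowed only for $\theta=e$) and, when $\theta=o$, the requirement $a>0$ keeping $s^{p_1}-(-s)^{p_1}$ well-behaved near $\omega=0$. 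Once these uniform bounds are in hand, compactness follows directly from Heine--Borel.
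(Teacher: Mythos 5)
Your proposal is correct and takes essentially the same route as the paper's own proof: local boundedness in $(x,\sigma)$ from the uniform localization of Theorem~\ref{thm2.9} (via the identification $\Psi_{p_1}^\theta(x,\sigma)=\phi_{p_1}^\theta(x,1/\sigma)$ and evenness in $\sigma$ from Theorem~\ref{thm2.7}), a finite subcover of the compact parameter interval, and then Heine--Borel. The only difference is that you explicitly justify closedness by proving joint continuity of $(x,\sigma,p_1)\mapsto\Psi_{p_1}^\theta(x,\sigma)$ with a dominated-convergence majorant, a step the paper merely asserts (it is essentially the content of its subsequent Theorem~\ref{thm2.12}(ii)).
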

\begin{proof}
From Theorems~\ref{thm2.7} and \ref{thm2.9} it follows that $\forall \in M$, $\exists \Delta>0$ such that the set
\begin{equation}\label{eq:2.34}
    D_\epsilon(p_1) = \Big\{\ x,\ \sigma\ p_2\ :\ p_2\in[p_1-\Delta,p_1+\Delta],\ \big| \Psi_{p_1}^\theta(x,\sigma)\big|\ge \epsilon\ \Big\}
\end{equation}
is bounded if $\epsilon>0$. Since $M$ is compact, there is a finite covering of $M$ with intervals  $[\,q_j-\Delta_j,\, q_j+\Delta_j\,]$ such that the set of $D_\epsilon$ satisfying (\ref{eq:2.33}) for $p_1=q_j$ and $\Delta=\Delta_j$ is bounded. Each set $D_\epsilon(q_j)$ is bounded; thus, $D_\epsilon$ is also bounded as a union of a finite  number of bounded sets. Since $D_\epsilon$ is closed in $\mathbb{R}^3=\{x,\sigma,p\}$, it follows that $D_\epsilon$ is compact.  
\end{proof}
\begin{theorem}\label{thm2.11}
If in Theorem~\ref{thm2.10} $m>p+1+k$, then the set 
$$
 D_\epsilon^k = \Big\{\ x,\ \sigma\ p_1\ :\  \Big| {\partial^k\,\Psi_{p_2}^\theta\over dx^k}(x,\sigma)\Big|\ge \epsilon,\ p_1\in [0,p] \ \Big\}
$$ 
is compact if $\epsilon>0$.  
\end{theorem}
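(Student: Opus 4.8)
The plan is to transcribe the proof of Theorem~\ref{thm2.10} essentially word for word, replacing the function $\Psi_{p_1}^\theta(x,\sigma)$ throughout by its $k$-th partial derivative $\frac{\partial^k}{\partial x^k}\Psi_{p_1}^\theta(x,\sigma)$ and invoking the derivative estimate (\ref{eq:2.32}) of Theorem~\ref{thm2.9} in place of (\ref{eq:2.31}). Concretely, I would prove that $D_\epsilon^k$ is both closed and bounded in $\mathbb{R}^3=\{x,\sigma,p_1\}$ and then conclude compactness.

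For closedness I must first check that $\frac{\partial^k}{\partial x^k}\Psi_{p_1}^\theta(x,\sigma)$ is well defined and \emph{jointly continuous} in $(x,\sigma,p_1)$. Differentiating the inversion integral (\ref{eq:2.29}) $k$ times under the integral sign produces the extra factor $s^k$, so (since $s=i\omega$ gives $|e^{s^2\sigma^2/2}|=e^{-\omega^2\sigma^2/2}\le 1$ and $|e^{sx}|=1$) the integrand is dominated in modulus by $|s|^k\,|F(s)|\,\bigl|s^{p_1}\pm(-s)^{p_1}\bigr|\le 2c\,|s|^{\,k+p-m}$ for $|s|\ge1$ and all $p_1\in M$, by Proposition~\ref{prop:2.1}. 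The hypothesis $m>p+1+k$ makes the exponent $k+p-m<-1$, so this bound is integrable; near $s=0$ the factors $|s|^k$ and $|s|^{p_1}$ keep the integrand bounded (the endpoint convention $a>0$ for $\theta=o$ in Theorem~\ref{thm2.10} excluding only the degenerate value $p_1=0$). Thus the integrand is dominated by a single integrable function uniformly in $(x,\sigma,p_1)$, and as it is continuous in these parameters, dominated convergence yields joint continuity of $\frac{\partial^k}{\partial x^k}\Psi_{p_1}^\theta$; in particular $D_\epsilon^k$ is closed.

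For boundedness I would fix $q\in M$ and apply Theorem~\ref{thm2.9}, obtaining $\Delta>0$, $T>0$, $Q>0$ with $\bigl|\frac{\partial^k}{\partial x^k}\phi_{p_1}^\theta(x,\rho)\bigr|<\epsilon$ whenever $|x|>T$ or $\rho<Q$, for all $p_1\in[q-\Delta,q+\Delta]$. Passing to $\sigma=1/\rho$ and using the symmetry $\Psi_{p_1}^\theta(x,-\sigma)=\Psi_{p_1}^\theta(x,\sigma)$ of Theorem~\ref{thm2.7}(iii) (so only $\sigma\ge0$ is at issue), the condition $\rho<Q$ reads $|\sigma|>1/Q$, while the boundary value $\Psi_{p_1}^\theta(x,0)=f_{p_1}^\theta(x)$ has $k$-th derivative tending to $0$ as $|x|\to\infty$ by Theorem~\ref{thm2.3}(ii). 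Hence the local set
\[ D_\epsilon^k(q)=\Bigl\{\,x,\sigma,p_1 : p_1\in[q-\Delta,q+\Delta],\ \Bigl|\tfrac{\partial^k}{\partial x^k}\Psi_{p_1}^\theta(x,\sigma)\Bigr|\ge\epsilon\,\Bigr\} \]
is confined to $\{|x|\le T,\ |\sigma|\le 1/Q,\ p_1\in[q-\Delta,q+\Delta]\}$ and is bounded. Since $M$ is compact it is covered by finitely many such intervals $[q_j-\Delta_j,q_j+\Delta_j]$, so $D_\epsilon^k\subseteq\bigcup_j D_\epsilon^k(q_j)$ is contained in a finite union of bounded sets, hence bounded. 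Being closed and bounded in $\mathbb{R}^3$, $D_\epsilon^k$ is compact.

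The one genuinely new point---and the main obstacle---is the joint continuity in the second step: whereas Theorem~\ref{thm2.10} could borrow the continuity of $\Psi_{p_1}^\theta$ directly from Theorem~\ref{thm2.7}, here I must verify that the additional factor $s^k$ does not spoil the absolute, locally uniform convergence of the inversion integral, and it is precisely the sharpened decay hypothesis $m>p+1+k$ that guarantees this. Once continuity is in hand, the covering argument is a routine transcription of the proof of Theorem~\ref{thm2.10}.
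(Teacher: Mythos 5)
Your proof is essentially correct, but it takes a genuinely different route from the paper. The paper's proof of Theorem~\ref{thm2.11} is a one-line reduction: replace the signal $f(x)$ by $d^k f/dx^k$ in Theorem~\ref{thm2.10}. Since $\mathcal{F}\big(d^kf/dx^k\big)=s^kF(s)$ up to a constant (Theorem~\ref{thm2.2}, Remark~\ref{rem2.6}), the function defined by the right-hand side of (\ref{eq:2.29}) with this new transform is exactly $\partial^k\Psi_p^\theta/\partial x^k$, so compactness of $D_\epsilon^k$ is inherited from Theorem~\ref{thm2.10} applied with $m$ replaced by $m-k$; the hypothesis $m>p+1+k$ is then precisely the condition $m-k>p+1$ that Theorem~\ref{thm2.10} requires. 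Your proof instead re-runs the covering argument of Theorem~\ref{thm2.10} directly on the derivative: dominated convergence with the extra factor $s^k$ to get joint continuity (hence closedness), and the derivative estimate (\ref{eq:2.32}) of Theorem~\ref{thm2.9} for boundedness. This is self-contained and makes explicit the continuity point that the paper leaves implicit, but it costs more work and introduces one mismatch you should repair: (\ref{eq:2.32}) is stated only for $0<k<m-p-2$, i.e.\ under $m>p+k+2$, which is strictly stronger than the hypothesis $m>p+1+k$ of Theorem~\ref{thm2.11}, so as written your boundedness step does not cover the full range of parameters the theorem claims. The clean fix is exactly the paper's substitution: apply the zeroth-order estimate (\ref{eq:2.31}) of Theorem~\ref{thm2.9} to the signal $d^kf/dx^k$, whose effective smoothness parameter is $m-k>p+1$, rather than the $k$-th-order estimate (\ref{eq:2.32}) to $f$ itself.
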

\begin{proof}
Let us take ${d^k\,f\over dx^k}$ instead of function $f(x)$ in Theorem~\ref{thm2.10}. Then the function defined by the right side of (\ref{eq:2.29}) where $F(s)=\mathcal{F}\Big({d^k\,f^\theta\over dx^k} \Big)$ is actually ${\partial^k\,\Psi_p^\theta\over dx^k}(x,\sigma)$. Thus, Theorem~\ref{thm2.11} follows from Theorem~\ref{thm2.10}.  
\end{proof}
\begin{theorem}\label{thm2.12}
(i) Consider functions $\Lambda_p^\theta(x)$ $\theta=e,o$ as functions of $p$ given by equalities (\ref{eq:1.3}) and (\ref{eq:1.4}) where $p$ is now an arbitrary complex number. Then $\Lambda_p^\theta(x$) is an analytic, even entire function of two variables $x$ and $p$.  
(ii) Let function $\Psi^\theta(p,x,\sigma)=\Psi_p^\theta(,x,\sigma)$ be defined by (\ref{eq:2.29}) where $F(s)$ is the  Fourier transform of $f(x)$. If $f(x)$ satisfies conditions of Theorem~\ref{thm2.3}, then $\Psi^\theta(p,x,\sigma)$ is  a continuous function of three variables. It has continuous mixed order derivatives ${\partial^{k+n+l}\,\Psi^\theta(p,x,\sigma)\over \partial x^k \partial \sigma^n \partial p^l}$ where $l$ is arbitrary, and $k$ and $n$ satisfy conditions of Theorem~\ref{thm2.7}.   
\end{theorem}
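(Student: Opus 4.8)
The plan is to handle the two parts by a single analytic estimate each, combined in part~(i) with the theorem that a locally uniform limit of holomorphic functions is holomorphic, and in part~(ii) with the theorem on differentiation of improper integrals by a parameter. For part~(i) I would first note that every summand of the series (\ref{eq:1.3}) and (\ref{eq:1.4}) is a polynomial in the two complex variables $x$ and $p$, hence entire on $\mathbb{C}^2$. It therefore suffices to prove that each series converges absolutely and uniformly on every polydisc $\{|x|\le R,\ |p|\le P\}$; the several-variable Weierstrass theorem (a locally uniform limit of holomorphic functions is holomorphic; Osgood's lemma) then makes the sum $\Lambda_p^\theta(x)$ entire in $(x,p)$. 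The parity claim is read off the series directly: $\Lambda_p^e$ contains only even powers of $x$ and $\Lambda_p^o$ only odd powers, so $\Lambda_p^e(-x)=\Lambda_p^e(x)$ and $\Lambda_p^o(-x)=-\Lambda_p^o(x)$.

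The estimate that drives (i) rests on the identity $(2n)!=2^{n} n!\prod_{k=0}^{n-1}(2k+1)$. For $|p|\le P$ it gives
\[
\frac{\bigl|\prod_{k=0}^{n-1}(p+2k+1)\bigr|}{(2n)!}
\le \frac{1}{2^{n} n!}\prod_{k=0}^{n-1}\Bigl(1+\frac{P}{2k+1}\Bigr)
\le \frac{C\,n^{P}}{2^{n}\,n!},
\]
the last step using $\sum_{k<n}1/(2k+1)\le C'+\tfrac12\log n$. Hence the $n$-th term of $\Lambda_p^e$ is bounded by $C R^{2n} n^{P}/(2^{n} n!)$ on the polydisc, and $\sum_n C R^{2n} n^{P}/(2^{n} n!)<\infty$; the bound is independent of $(x,p)$, which is exactly the required uniform convergence. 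The analogous computation with $\prod_{k=0}^{n}(p+2k)$ handles $\Lambda_p^o$, completing~(i).

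For part~(ii) I would differentiate the representation (\ref{eq:2.29}) under the integral sign along $s=i\omega$. Each kind of derivative contributes an explicit factor to the integrand: $\partial_x$ multiplies it by $s$, $\partial_\sigma$ by a polynomial in $s$ of degree $\le 2$ (so $\partial_\sigma^n$ by one of degree $\le 2n$), and $\partial_p$ by a logarithm, since $\partial_p s^p=(\log s)\,s^p$. After applying $\partial_x^k\partial_\sigma^n\partial_p^l$ the integrand on $s=i\omega$ is bounded by
\[
C\,|F(s)|\,|\omega|^{\,k+2n+p}\,\bigl(\log|\omega|\bigr)^{l}\,\bigl|e^{s^{2}\sigma^{2}/2}\bigr|,
\]
where $|e^{s^{2}\sigma^{2}/2}|=e^{-\omega^{2}\sigma^{2}/2}\le1$ and the $\sigma$-polynomial is bounded on compact $\sigma$-sets. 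Here $l$ denotes the order of the $p$-derivative, while the conditions of Theorem~\ref{thm2.7} furnish an $L$ with $2n+k\le 2L$ and $m>p+1+2L$. Using Proposition~\ref{prop:2.1} in the form $|F(s)|\le c/|\omega|^{m}$, the integrand is dominated, locally uniformly in $(p,x,\sigma)$, by $C(\log|\omega|)^{l}/|\omega|^{\,m-k-2n-p}$ with $m-k-2n-p\ge m-2L-p>1$. Absolute, locally uniform convergence of this integral legitimizes interchanging differentiation and integration by the theorem on differentiation of improper integrals by a parameter ([5], 11.55a), and joint continuity of the integrand yields continuity of each mixed derivative, as well as of $\Psi^\theta$ itself (the case $k=n=l=0$).

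The main obstacle is precisely the logarithmic factor $(\log s)^{l}$ produced by the $p$-derivatives, which does not appear in Theorem~\ref{thm2.7} and which is what allows $l$ to be arbitrary. I expect it to be harmless, but this must be checked at both ends of the contour. At infinity $(\log|\omega|)^{l}$ grows more slowly than any positive power of $|\omega|$, so the strict margin $m-k-2n-p>1$ coming from $m>p+1+2L$ absorbs it and preserves convergence for every $l$. Near $\omega=0$ the factor $|s^{p}|=|\omega|^{p}$ with $p>0$ overwhelms the logarithmic singularity, so $(\log|\omega|)^{l}|\omega|^{p}\to0$ and the integrand stays bounded. Once these two endpoint estimates are in hand, the remainder of~(ii) is the routine dominated-convergence form of differentiation under the integral sign.
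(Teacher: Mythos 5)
Your proposal is correct and follows essentially the same route as the paper: for (i) the paper also deduces entirety from locally uniform absolute convergence of the series (\ref{eq:1.3})--(\ref{eq:1.4}) (via the ratio test, where you give an explicit factorial bound, and "even entire" in the statement means "indeed entire," not parity), and for (ii) the paper likewise differentiates (\ref{eq:2.29}) under the integral sign, merely citing the argument of Theorem~\ref{thm2.7}(ii) and omitting the details. Your handling of the logarithmic factors $(\log s)^l$ produced by the $p$-derivatives is precisely the detail the paper's omitted proof would need, so your write-up is a faithful, slightly more complete version of the intended argument.
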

\begin{proof}
Let us assume $\theta=e$. The ratio test shows that the series (\ref{eq:1.3}) converges  absolutely and uniformly by $p$ and $x$ on every bounded set of pairs $(x,p)$, $|x|<R_1$, $|p|<R_2$ Since each term in (\ref{eq:1.3}) is an entire function, the sum is also entire. This proves (i).  The proof of (ii) is similar to that of (ii) of Theorem~\ref{thm2.7} and is therefore omitted.  
\end{proof}
\begin{remark}\label{rem2.6}
Since $\phi_p^\theta(x,\rho)=\rho^{p+1}\Lambda_p^\theta(x\rho)*f(x)$, it is easy to show that if the $i$ th  derivatives of $f(x)$ are absolutely convergent for $i\leq m$, then ${\partial^i\,\phi_p^\theta\over dx^i}(x,\rho)=\rho^{p+1}\Lambda_p^\theta(x\rho)*{\partial^i\,f(x)\over dx^i}$ and therefore  ${\partial^i\,\Psi_p^\theta\over dx^i}(x,\sigma)={1\over \rho^{p+1}}\Lambda_p^\theta({x\over \sigma})*{\partial^i\,f(x)\over dx^i}$ for $\sigma>0$. This shows that if a certain statement is true about contours $\Psi_p^\theta(x,\sigma)=c$, then it is true for contours ${\partial^i\,\Psi_p^\theta(x,\sigma)\over dx^i}=c$ if the assumptions for ${\partial^i\,f(x)\over dx^i}$ in the latter case are the same as the assumptions for $f(x)$ in the former. 
\end{remark}
\section{Topological Properties of Level Lines of the Convolutions}
  \label{sec3}
For this section we will assume that the conditions of Theorem~\ref{thm2.3} are satisfied. We will use some concepts of differential topology which can be found in [7]. We assume that $\pi$ and $p$ are fixed, and the notation $\Psi(x,\sigma)$ will be used instead of $\Psi_p^\theta(x,\sigma)$. Also, ${\partial^{m+n}\,\Psi(x,\sigma)\over \partial x^m \partial \sigma^n }$  will be denoted by $\Psi_{mxn\sigma}$ Let
\begin{equation}\label{eq:3.1}
  M_c=\{\,X,\Sigma\ :\ \Psi(x,\sigma)=C \}\, .
\end{equation}
Assume that $M_c$ does not contain points such that $\Psi_x(x,\sigma)=\Psi_c(x,\sigma)$. Then, from  the implicit function theorem, it follows that $M_c$ is a one-dimensional manifold. Since $M_c$ is  compact by Theorem~\ref{thm2.10}, $M_c$ is a union of a finite number of closed curves (curves homeomorphic to a circle). Since $\Psi(x,\sigma)=\Psi(x,-\sigma)$, $M_cc$ must be symmetric with  respect to the $x$-axis. Let us prove that each component $K$ of $M_cc$ intersects $x$ - axis.  Assume the contrary. Let $K$ lie entirely in the upper half plane $\sigma>0$. Then, since $K$ is compact, there exists a point $(x_0,\sigma_0)\in K$ such that its $\sigma$-coordinate reaches absolute minimum on $K$. By assumption, either $\Psi_x(x_0,\sigma_0)\neq 0$ or $\Psi_\sigma(x_0,\sigma_0)\neq 0$ or both. Assume $\Psi_x(x_0,\sigma_0)\neq 0$; then, by the implicit function theorem there exists a neighborhood $V$ of $\sigma_0$, and a function $x(\sigma)$ defined on $V$ and such that $\Psi(x(\sigma),\sigma) = c$,  $x(\sigma_0)=x_0$. This shows that there are points in $K$ having smaller $\sigma$-coordinates than $\sigma_0$.  Thus, we come to a contradiction and therefore  $\Psi_x(x_0,\sigma_0)=0$. This implies that
\begin{equation}\label{eq:3.2}
  \Psi_\sigma(x_0,\sigma_0\neq 0\, .
\end{equation}  
Therefore, by the implicit function theorem, $\exists$ a function of $\sigma(x)$ in a neighborhood of $x_0$, such that $\Psi(x,\sigma(x))=c$, $\sigma(x_0)=\sigma_0$. Since $\sigma$ achieves a minimum at $x_0$, $\sigma^\prime(x_0)=0$. Let us differentiate equality $\Psi(x,\sigma(x))=c$ by $x$. Then,  $\Psi_x+\Psi_\sigma\sigma^\prime(x)=0$. Differentiating this again we get:
\begin{equation}
 \Psi_{xx}+ \Psi_{x\sigma}\sigma^\prime(x)+\Psi_{\sigma\sigma}(\sigma^\prime(x))^2+\Psi_{\sigma x}\sigma^\prime(x)+\Psi_\sigma\sigma^{\prime\prime}(x)=0\, .\notag
\end{equation}
Substituting $x=x_0$ and using equality $\sigma^\prime(x_0)=0$, we get $\Psi_{xx}+\Psi_\sigma\sigma^{\prime\prime}(x_0)=0$. Now using (v) of Theorem~\ref{thm2.7}, we get $\Psi_\sigma\sigma^{\prime\prime}(x_0)=-{\Psi_\sigma \over \sigma_0}$. Since $\Psi_\sigma(x_0,\sigma_0)\neq 0$ this implies that $\sigma^{\prime\prime}(x_0)=-{1 \over \sigma_0}<0$. This, however, contradicts the fact that $\sigma(x_0)$ reaches a minimum at $x_0$. Thus, we again arrive at a contradiction, and $K$ intersects the $x$-axis. This shows that $K$ is itself symmetric with respect to the $x$-axis.  
\begin{theorem}\label{thm3.1}
Under generic assumptions (such that there is no point $(x,\sigma)$ satisfying 
${\partial^{j+1}\,\Psi_p^\theta(x,\sigma)\over \partial x^j \partial \sigma }= {\partial^{j+1}\,\Psi_p^\theta(x,\sigma)\over \partial x^{j+1}}=0$ and ${\partial^{j}\,\Psi_p^\theta(x,\sigma)\over \partial x^{j}}=c$ the set $M_c^{j\,!p} = \{\,x,\sigma\ :\ {\partial^{j}\,\Psi_p^\theta(x,\sigma)\over \partial x^{j}}= c\,\}$ where $c\neq 0$ and $p\ge 0$ is a finite union of closed smooth curves, and each one of them is symmetric with respect to the $x$-axis.   
\end{theorem}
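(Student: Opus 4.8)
The plan is to reduce everything to the $j=0$ case already carried out in the paragraph preceding the statement, by replacing the signal $f$ with its $j$-th derivative. Set $g(x,\sigma):=\partial^{j}\Psi_p^\theta(x,\sigma)/\partial x^{j}$. By Remark~\ref{rem2.6}, differentiating $j$ times in $x$ amounts to convolving the same kernel with $f^{(j)}$, so $g$ is itself the scale-space function $\Psi_p^\theta$ attached to the signal $f^{(j)}$. Provided $m>p+1+j$ (which is what Theorem~\ref{thm2.11} requires, and what I assume throughout), $f^{(j)}$ still satisfies the hypotheses of Theorem~\ref{thm2.3}, so every structural property established there for $\Psi$ transfers verbatim to $g$. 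The generic hypothesis in the statement, namely that no point carries $g_x=g_\sigma=0$ together with $g=c$, is exactly the assertion that $c$ is a \emph{regular value} of $g$.

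Three ingredients then carry over. First, $g$ is even in $\sigma$: differentiating the identity $\Psi(x,\sigma)=\Psi(x,-\sigma)$ of Theorem~\ref{thm2.7}(iii) $j$ times in $x$ preserves evenness, so $M_c^{j,p}=\{g=c\}$ is symmetric under the reflection $R:(x,\sigma)\mapsto(x,-\sigma)$. Second, $g$ obeys the same scale-space relation $g_\sigma=\sigma\,g_{xx}$, obtained by differentiating part (v) of Theorem~\ref{thm2.7} $j$ times in $x$ and using that $\sigma$ is independent of $x$. Third, $M_c^{j,p}$ is compact by Theorem~\ref{thm2.11}. Regularity of $c$ together with the implicit function theorem makes $M_c^{j,p}$ a one-dimensional manifold, and compactness forces it to be a finite disjoint union of simple closed curves.

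It remains to show each component $K$ meets the $x$-axis; the reflection argument then yields the symmetry claim. Arguing by contradiction, suppose $K$ lies entirely in $\sigma>0$. By compactness $\sigma$ attains a minimum $\sigma_0>0$ at some $(x_0,\sigma_0)\in K$. If $g_x(x_0,\sigma_0)\neq 0$, the implicit function theorem writes $K$ locally as a graph $x=x(\sigma)$, producing points of $K$ with $\sigma<\sigma_0$, a contradiction; hence $g_x(x_0,\sigma_0)=0$ and, by regularity, $g_\sigma(x_0,\sigma_0)\neq 0$. Writing $K$ locally as $\sigma=\sigma(x)$ with $\sigma'(x_0)=0$ and differentiating $g(x,\sigma(x))=c$ twice gives $g_{xx}+g_\sigma\,\sigma''(x_0)=0$; substituting $g_{xx}=g_\sigma/\sigma_0$ from the diffusion relation yields $\sigma''(x_0)=-1/\sigma_0<0$, contradicting minimality. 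Thus $K$ meets the $x$-axis at some fixed point of $R$; since $R$ permutes components and distinct components are disjoint, $R(K)=K$, so $K$ is symmetric about the $x$-axis.

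The one point demanding care — the main obstacle — is the justification that $g$ inherits both the evenness and the relation $g_\sigma=\sigma\,g_{xx}$, since both rest on interchanging the $x$- and $\sigma$-differentiations and on the existence of enough continuous mixed derivatives. This is supplied precisely by the smoothness bookkeeping of Theorem~\ref{thm2.7}(ii) and the hypothesis $m>p+1+j$ of Theorem~\ref{thm2.11}; once these are in place, the entire $j=0$ argument transcribes to $g$ without change.
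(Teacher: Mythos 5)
Your proposal is correct and takes essentially the same route as the paper: the paper establishes the $j=0$ case in the paragraph immediately preceding the theorem (implicit function theorem plus compactness from Theorem~\ref{thm2.10}, then the minimum-point contradiction via the relation $\Psi_\sigma=\sigma\Psi_{xx}$ from Theorem~\ref{thm2.7}(v)), and disposes of $j>0$ exactly as you do, by invoking Remark~\ref{rem2.6} to view $\partial^j\Psi/\partial x^j$ as the scale-space function of $f^{(j)}$. The only difference is one of exposition: you transcribe the $j=0$ argument in full and make explicit the final step that a component meeting the $x$-axis is fixed by the reflection (a point the paper leaves implicit here and only flags later, in the proof of Theorem~\ref{thm3.2}).
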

\begin{proof}
The statement for $i=0$ has been proved in the above argument. The case of $i>0$ follows from Remark~\ref{rem2.6}.  
\end{proof}
Now that we have determined the topological structure of the level lines of functions $\Psi_{k\,\!x}(x,\sigma)$ for $c\neq 0$ (under some generic assumptions), we want to investigate the structure of lines $\Psi_{k\,\!x}(x,\sigma)=0$ (which is the same as ${\partial^k\,\Psi_p^\theta(x,\sigma)\over \partial x^k}=0$ in our notations). For this we need some preliminary results.
\begin{lemma}\label{lem3.1}
 Let $V(x)$ be a smooth vector field on a Euclidean space $\mathbb{R}^n$ and let $z(t)$, $t\in (-\infty,\infty)$ be a trajectory of $V(x)$. Let $X$ be the union of all points $z(t)$, $t\in (-\infty,\infty)$ and let the closure $\overline{X}$ of $X$ be a one-dimensional sub-manifold of  $\mathbb{R}^n$. Assume also that  $\overline{X}$ does not contain equilibrium points of the vector field $V(x)$. Then, $X$ is a closed set, i.e. $X=\overline{X}$ .  
\end{lemma}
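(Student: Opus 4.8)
The plan is to reduce everything to the classification of connected one--dimensional manifolds, handling the noncompact and compact cases by a topological and a dynamical argument respectively. First I would record two structural facts. Since $X=\{z(t):t\in(-\infty,\infty)\}$ is the continuous image of the connected set $\mathbb{R}$, it is connected, and hence so is its closure $\overline{X}$. As $\overline{X}$ is by hypothesis a one--dimensional submanifold (without boundary) of $\mathbb{R}^n$, the classification of connected $1$--manifolds gives a diffeomorphism $\psi:N\to\overline{X}$ where either $N=\mathbb{R}$ or $N=S^1$, the latter occurring exactly when $\overline{X}$ is compact. I would also note that, because $z$ is a smooth curve whose image lies in the embedded submanifold $\overline{X}$, its velocity $\dot z(t)=V(z(t))$ is automatically tangent to $\overline{X}$; since $\overline{X}$ contains no equilibria of $V$, we have $\dot z(t)\neq 0$ for all $t$.

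Case $N=\mathbb{R}$. Here a purely point--set argument suffices. The set $A:=\psi^{-1}(X)$ is the image of the connected set $X$ under the homeomorphism $\psi^{-1}$, hence a connected subset of $\mathbb{R}$, i.e. an interval; and since $X$ is dense in $\overline{X}$ and $\psi^{-1}$ is a homeomorphism, $A$ is dense in $\mathbb{R}$. The only dense interval in $\mathbb{R}$ is $\mathbb{R}$ itself, so $A=\mathbb{R}$ and therefore $X=\psi(A)=\overline{X}$.

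Case $N=S^1$. Now $\overline{X}$ is compact, and topology alone is not enough (a circle minus a point is connected and dense), so I would use the flow. Writing $u(t)=\psi^{-1}(z(t))$ and lifting to a continuous $\widetilde u:\mathbb{R}\to\mathbb{R}$, the derivative $\dot{\widetilde u}(t)=d\psi^{-1}_{z(t)}(\dot z(t))$ is continuous and nowhere zero, hence of constant sign; say it is positive, so $\widetilde u$ is strictly increasing. Because $\overline{X}$ is compact and $V$ is continuous and nonvanishing on it, $|\dot z(t)|=|V(z(t))|\ge\delta>0$ for all $t$, and $d\psi^{-1}$ is bounded below on the compact circle; thus $\dot{\widetilde u}(t)\ge c\delta>0$ and $\widetilde u(t)\to\pm\infty$ as $t\to\pm\infty$. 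Consequently $\widetilde u$ is onto $\mathbb{R}$, $u$ is onto $S^1$, and $X=\psi(u(\mathbb{R}))=\psi(S^1)=\overline{X}$.

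In either case $X=\overline{X}$, which is the assertion. The main obstacle is the compact case: the naive topological argument fails on $S^1$, and one must exploit both hypotheses that distinguish a genuine orbit from an arbitrary dense connected subset, namely that $z$ is defined for all $t\in(-\infty,\infty)$ and that $V$ has no equilibria on the (now compact) set $\overline{X}$, which together force the speed to be bounded below and the orbit to sweep out the entire circle. The remaining points to check carefully are the tangency of $\dot z$ to $\overline{X}$ (immediate for an embedded submanifold) and the constancy of sign of $\dot{\widetilde u}$, both of which are routine.
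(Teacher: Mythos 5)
Your proof is correct, but it follows a genuinely different route from the paper's. The paper argues locally and by contradiction: taking $\rho\in\overline{X}\setminus X$, it notes that $\overline{X}\setminus X$ is invariant under the flow (the set of limit points of an orbit is invariant, cited as [8], Prop.\ 1.4), so the whole trajectory of $\rho$ lies in $\overline{X}\setminus X$; if that trajectory is nontrivial, the one-dimensional manifold structure of $\overline{X}$ forces a neighborhood $W$ of $\rho$ to meet $\overline{X}$ exactly in an arc of $\rho$'s trajectory, whence $W\cap X=\emptyset$, contradicting $\rho$ being a limit point of $X$; so $\rho$'s trajectory is a single point, i.e.\ an equilibrium, contradicting the hypothesis. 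You instead argue globally: you invoke the classification of connected $1$-manifolds and split into the cases $\overline{X}\cong\mathbb{R}$ and $\overline{X}\cong S^1$. What your route buys: in the noncompact case the argument is purely point-set (a dense interval in $\mathbb{R}$ is all of $\mathbb{R}$), which reveals that the no-equilibrium hypothesis is only genuinely needed when $\overline{X}$ is compact; and in the compact case you prove something stronger than closedness, namely that the orbit sweeps out the entire circle (hence is periodic). What the paper's route buys: it needs neither the classification theorem, nor lifting to the universal cover, nor the compactness estimates ($|V|\ge\delta$ and the lower bound on $d\psi^{-1}$); its only dynamical input is the invariance of $\overline{X}\setminus X$, and it handles both topological types uniformly in a few lines. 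The two points you flag as routine --- tangency of $\dot z(t)$ to the embedded submanifold $\overline{X}$, and the constant sign of $\dot{\widetilde u}$ --- are indeed unproblematic, so I see no gap in your argument.
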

\begin{proof}
Assume on the contrary that $\overline{X}\setminus X \neq \emptyset$, i.e., that $X$ is not closed. Let  $\rho\in \overline{X}\setminus X $. Since $\overline{X}\setminus X$ is invariant with respect to $V(x)$, i.e., it contains the whole trajectory containing $\rho$ (see [8], proposition 1.4), $\overline{X}\setminus X$ contains the trajectory of $\rho$, $\rho(t)$ such that $\rho(0)=y$. If the trajectory of $\rho$ is not a single point, then since $\overline{X}$ is a one-dimensional sub-manifold of $\mathbb{R}^n$, there is an $\epsilon>0$ and an open neighborhood $W$, $\rho\in W \subset \mathbb{R}^n$ such that  
\begin{equation}\label{eq:3.3}
  W\cap \overline{X} = \cup\{\,\rho(t)\ :\ -\epsilon<t<\epsilon\,\}\, .
\end{equation}
Since $\rho(t)\in \overline{X}\setminus X$, this implies that $W\cap X=\emptyset$ and $\rho\in W$. This, however, contradicts the assumption that $\rho$ is a limit point of $X$. Thus, the trajectory of $\rho$ consists of a single point. But then obviously, the vector field $V$ is zero at $\rho$, and thus $\rho$ is an equilibrium point of $V$. This again contradicts the conditions of the Lemma. Therefore, $X$ is a closed set.  
\end{proof}
We will also need the following: 
\begin{definition}\label{def3.1}
A vector field $V$ on a manifold $M$ is called $\omega$-complete if any  trajectory of $V$ defined on the interval  $[t_0,t_1]$ can be extended to $[t_0,\infty]$. It is called  $\alpha$-complete if every trajectory defined on $[t_0,t_l]$ can be extended to  $(-\infty,t_1]$. $V$ will  be called \textit{complete} if it is both $\alpha$- and $\omega$-complete.  
\end{definition}
A vector field $V$ satisfying Lipschitz conditions with a uniform constant $K$, i.e., $\|V(x)-V(y)\|< K\|x-y\|$ for all $x$ and $\rho$, is complete [10]. In particular, if the  derivatives of the components of vector $V(x)$ are uniformly bounded, then the Lipschitz condition is satisfied and $V$ is complete. In order to tie our discussion about vector fields with  the level contours of functions ${\partial^j\,\Psi_p^\theta(x,\sigma)\over \partial x^j}$, need to introduce a vector field $V$ whose trajectories are such level contours. 

Obviously, we just need to consider a vector field which is perpendicular to the gradient of the function ${\partial^j\,\Psi_p^\theta(x,\sigma)\over \partial x^j}$,  $j=0,1,2,\dots$. Let $\Psi(x,\sigma)=\Psi_p^\theta(x,\sigma)$ as before. It is convenient for us to consider the level contours of $\Psi_x$. (According to Remark~\ref{rem2.6}, this is a general case.) Define vector field $V$ on the plane $(x,\sigma)$ as $(\Psi_{x\,\sigma},\Psi_{x\,x})$. Then, we have the system of differential equations of second order:
\begin{align}
  \dot{x} & = -\Psi_{x\,\sigma}(x,\sigma) \label{eq:3.4} \\
\dot{\sigma} & = -\Psi_{x\,x}(x,\sigma)\, ,\notag
\end{align}
such that $\Psi_x$ is constant on its trajectories. (Indeed, $V$ is perpendicular to the gradient $\nabla \Psi_x(x,\sigma)$. We will need the following very useful result, which shows that there is an energy function defined for the vector field $V(x,\sigma)$.
\begin{lemma}\label{lem3.2}
Let $T(t)$ be any trajectory of the vector field $V(x,\sigma)$ defined by (\ref{eq:3.4}). Then function $L(x,\sigma)=\Psi(x,\sigma)-x\Psi_x(x,\sigma)$ is non-decreasing for $\sigma>0$ on $T(t)$ and non-increasing for $\sigma<0$. Generically, it is strictly increasing on every interval $(t_1,t_2)$ of the argument of the trajectory $T(t)$ for $\sigma>0$, and strictly decreasing for $\sigma<0$.  
\end{lemma}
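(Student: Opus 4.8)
The plan is to show that along any trajectory $T(t)$ the derivative $\dot L$ equals $\sigma\,\Psi_{xx}^2$, so that its sign is governed entirely by the sign of $\sigma$. The essential ingredient is the parabolic identity recorded as part (v) of Theorem~\ref{thm2.7}, namely $\Psi_\sigma=\sigma\,\Psi_{xx}$. This can be read off from the Fourier representation in part (iv): differentiating $F(s)\bigl(s^p\pm(-s)^p\bigr)e^{s^2\sigma^2/2}$ with respect to $\sigma$ multiplies the transform by $\sigma s^2$, which is precisely the transform of $\sigma\,\Psi_{xx}$, so inverting gives $\Psi_\sigma=\sigma\,\Psi_{xx}$.

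I would first record the two first-order partials of $L=\Psi-x\,\Psi_x$. Direct differentiation gives $L_x=-x\,\Psi_{xx}$ and $L_\sigma=\Psi_\sigma-x\,\Psi_{x\sigma}$. Next I would differentiate $L$ along the flow of (\ref{eq:3.4}), writing $\dot L=L_x\dot x+L_\sigma\dot\sigma$ and substituting the right-hand sides of (\ref{eq:3.4}). Using that the field is tangent to the level curves of $\Psi_x$, i.e. $\Psi_{xx}\dot x+\Psi_{x\sigma}\dot\sigma=0$ (this is the stated defining property of $V$), the two contributions proportional to $x\,\Psi_{xx}\Psi_{x\sigma}$ cancel, leaving the clean expression $\dot L=\Psi_\sigma\,\Psi_{xx}$. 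Invoking $\Psi_\sigma=\sigma\,\Psi_{xx}$ then yields $\dot L=\sigma\,\Psi_{xx}^2$.

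From this the monotonicity is immediate: for $\sigma>0$ we have $\dot L=\sigma\,\Psi_{xx}^2\ge 0$, so $L$ is non-decreasing along $T(t)$, and for $\sigma<0$ we have $\dot L\le 0$, so $L$ is non-increasing. For the generic strict statement I would argue that $\Psi_{xx}$ cannot vanish on a whole subinterval of $T(t)$. By Theorem~\ref{thm2.12} the function $\Psi_p^\theta$ is real-analytic in $(x,\sigma)$, so its restriction to the analytic trajectory $T(t)$ is an analytic function of $t$; hence $\Psi_{xx}$ either vanishes identically along $T(t)$---excluded by the generic hypothesis, which forbids an arc on which both $\Psi_{xx}$ and $\Psi_{x\sigma}$ degenerate---or has only isolated zeros. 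In the latter case $\dot L>0$ (resp. $\dot L<0$) off a discrete set, and integrating gives strict increase (resp. decrease) on every interval $(t_1,t_2)$.

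I expect the main obstacle to be bookkeeping rather than any deep difficulty. First, one must pin down the exact sign convention in (\ref{eq:3.4}) so that the orientation of the flow produces $\dot L=+\sigma\,\Psi_{xx}^2$: reversing the parametrization flips the sign and would swap the two monotonicity regimes, so the stated direction fixes the admissible orientation of $V$. Second, one must justify using Theorem~\ref{thm2.7}(v) at every point of $T(t)$, which requires enough smoothness (order $l\ge 2$ in Theorem~\ref{thm2.7}) to guarantee that the mixed derivatives $\Psi_\sigma$, $\Psi_{xx}$ and $\Psi_{x\sigma}$ entering the computation are continuous. Both points are covered by the hypotheses already in force, so the entire argument collapses to the one-line cancellation that produces $\dot L=\sigma\,\Psi_{xx}^2$.
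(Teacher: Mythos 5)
Your proposal is correct and follows essentially the same route as the paper's proof: differentiate $L$ along the flow, use the tangency of $V$ to the level curves of $\Psi_x$ to reduce $\dot L$ to $\Psi_\sigma\Psi_{xx}$, invoke Theorem~\ref{thm2.7}(v) ($\Psi_\sigma=\sigma\Psi_{xx}$) to obtain $\dot L=\sigma\Psi_{xx}^2$, and dispose of strictness by the generic non-vanishing of $\Psi_{xx}$ on the level curves. Your added observations (the sign/orientation inconsistency between the displayed system (\ref{eq:3.4}) and the text's definition of $V$, and the analyticity argument for isolated zeros of $\Psi_{xx}$) only sharpen details the paper leaves implicit.
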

\begin{proof}
Along every trajectory $T(t)$,  
\begin{align}
  {d\,\!L(T(t))\over dt} &= \langle V(x,\sigma), \nabla L(x,\sigma) \rangle \notag \\
& = -\Psi_{x\,\!\sigma}(\Psi_x-x\,\!\Psi_{x\,\!x}-\Psi_x)+\Psi_{x\,\!x}(\Psi_\sigma-x\,\!\Psi_{x\,\!\sigma}) \notag \\
& = \Psi_{x\,\!x}\Psi_\sigma=\sigma \Psi_{x\,\!x}^2 \notag
\end{align}
(the last equality follows from (v), Theorem~\ref{thm2.7}). This shows that $L(T(t))$ is non-decreasing for  $\sigma \ge 0$ and is non-increasing for $\sigma<0$. Since generically, $\Psi_{x\,\!x}$ is not identically zero on any curve $\Psi_x(x,\sigma)=c$ , our result follows.  
\end{proof}
\begin{lemma}\label{lem3.3}
If the conditions of Theorem~\ref{thm2.7} are satisfied, then functions ${\partial^{k+n}\,\Psi_p^\theta(x,\sigma)\over \partial x^k\partial\sigma^n}$, are uniformly bounded on the plane $(x,\sigma)$ for $2n+k<-2l$.  
\end{lemma}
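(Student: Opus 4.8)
The plan is to start from the Fourier integral representation (\ref{eq:2.29}) for $\Psi_p^\theta(x,\sigma)$ and to differentiate under the integral sign, which is legitimate in the relevant range of $(k,n)$ by part (ii) of Theorem~\ref{thm2.7} (the admissible range should be read as $2n+k\le 2l$, matching Theorem~\ref{thm2.7}, since the printed $2n+k<-2l$ cannot hold for nonnegative integer orders). Taking $\theta=e$, the $x$-derivatives contribute a factor $s^k$ from $\partial_x^k e^{sx}=s^k e^{sx}$, while the $\sigma$-derivatives act only on $e^{s^2\sigma^2/2}$. The key combinatorial fact is that $\partial_\sigma^n e^{s^2\sigma^2/2}=q_n(s,\sigma)\,e^{s^2\sigma^2/2}$, where $q_n$ is a polynomial each monomial of which has the form $s^{2a}\sigma^{2a-n}$ with $n/2\le a\le n$. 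This follows by induction from the recursion $q_n=\partial_\sigma q_{n-1}+s^2\sigma\,q_{n-1}$, under which the difference (power of $s$) minus (power of $\sigma$) is preserved and equal to $n$. Hence the differentiated integrand is a finite sum of terms of the shape $c_a\,s^{k+2a}\sigma^{2a-n}\bigl(s^p+(-s)^p\bigr)F(s)\,e^{s^2\sigma^2/2}e^{sx}$.

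Next I would restrict to the imaginary axis $s=i\omega$, where $|e^{sx}|=1$ uniformly in $x$ and $e^{s^2\sigma^2/2}=e^{-\omega^2\sigma^2/2}$, and estimate each term. The crucial regrouping, using $|s^p+(-s)^p|\le 2|\omega|^p$, is
$$
|\omega|^{k+2a+p}\,|\sigma|^{2a-n}\,e^{-\omega^2\sigma^2/2}
= |\omega|^{k+n+p}\,\bigl(|\omega\sigma|\bigr)^{2a-n}e^{-(\omega\sigma)^2/2}.
$$
Because $2a-n\ge 0$, the one-variable function $u\mapsto u^{2a-n}e^{-u^2/2}$ is bounded on $[0,\infty)$, so the factor $\bigl(|\omega\sigma|\bigr)^{2a-n}e^{-(\omega\sigma)^2/2}$ is bounded by a constant independent of both $\omega$ and $\sigma$. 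This is the heart of the argument: the polynomial growth in $\sigma$ produced by the $\sigma$-differentiations is exactly dominated by the Gaussian, and the domination is uniform across the whole plane rather than merely pointwise.

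What remains is to bound $\int_{-\infty}^{\infty}|\omega|^{k+n+p}\,|F(i\omega)|\,d\omega$. Here I would invoke Proposition~\ref{prop:2.1}, which gives $|F(i\omega)|\le c/|\omega|^{m}$, together with the hypothesis $m>p+1+2l$; since $2n+k\le 2l$ yields $k+n\le 2n+k\le 2l<m-p-1$, we get $k+n+p-m<-1$, so the integrand is $O(|\omega|^{k+n+p-m})$ at infinity and converges there, while near $\omega=0$ integrability is immediate since $k+n+p\ge 0$ and $F(i\omega)$ is bounded. Combining the uniform bound on the $\sigma$-factor with this convergent majorant produces a finite bound on $|\partial^{k+n}\Psi_p^\theta/\partial x^k\partial\sigma^n|$ independent of $(x,\sigma)$, which is the claim; the case $\theta=o$ is identical with $s^p-(-s)^p$ replacing $s^p+(-s)^p$. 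I expect the main obstacle to be not the convergence estimate but verifying the precise monomial structure of $q_n$ and confirming that every exponent $2a-n$ is nonnegative, since that nonnegativity is exactly what makes the Gaussian domination uniform in $\sigma$.
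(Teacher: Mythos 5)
Your proof is correct, and while it starts from the same place as the paper --- the representation (\ref{eq:2.29})/(\ref{eq:3.5}) differentiated under the integral sign, with the same appeal to Theorem~\ref{thm2.7} for the legitimacy of that step and the same reading of the misprinted hypothesis as $2n+k\le 2l$ --- the way you establish the uniform bound is genuinely different. The paper writes the differentiated integrand as $P(\omega,\sigma)F(i\omega)|\omega|^p e^{-\omega^2\sigma^2/2}e^{i\omega x}$, records only the degrees of $P$ ($n$ in $\sigma$, $2n+k$ in $\omega$), and then splits the plane into two regions: for $\sigma>T$ it asserts in (\ref{eq:3.7}) that $P(\omega,\sigma)e^{-\omega^2\sigma^2/2}$ is uniformly bounded and integrates $|\omega|^p|F(i\omega)|$, while for $0\le\sigma\le T$ it discards the Gaussian, bounds $|P|\le\alpha|\omega|^{2n+k}+\beta$ as in (\ref{eq:3.12}), and uses $m>2n+k+p+1$ for convergence. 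You avoid the case split entirely: by tracking the exact monomial structure $s^{2a}\sigma^{2a-n}$, $2a\ge n$, of the $\sigma$-derivatives and regrouping each term as $|\omega|^{k+n+p}\,|\omega\sigma|^{2a-n}e^{-(\omega\sigma)^2/2}$, you get one estimate valid on the whole plane, needing only $m>k+n+p+1$. This buys two things. First, your argument is slightly sharper (the weaker condition on $m$), though under the stated hypotheses both suffice. Second, and more importantly, your explicit combinatorics actually closes a gap the paper leaves open: the paper's ``clearly'' at (\ref{eq:3.7}) is \emph{false} for an arbitrary polynomial with those degrees (take $P=\sigma^n$, whose product with the Gaussian blows up as $\sigma\to\infty$ at $\omega=0$); the assertion is true only because every monomial of $P$ carries at least as many powers of $\omega$ as of $\sigma$, which is precisely the invariant $2a-n\ge 0$ you prove by induction on the recursion $q_n=\partial_\sigma q_{n-1}+s^2\sigma\,q_{n-1}$. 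So your proof is not just an alternative route; it supplies the structural fact on which the paper's own estimate silently depends.
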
 
\begin{proof}
According to (\ref{eq:2.29}) for $\theta=e$, for some constant $c$,
\begin{equation}\label{eq:3.5}
  \Psi(x,\sigma)=\Psi_p^\theta(x,\sigma)={c_1\over 2\pi}\,\!\int_{-\infty}^{\infty}F(i\,\!\omega)|\omega|^p\,\!e^{-\omega^2\sigma^2/2}e^{i\,\!\omega\,\!x}\,d\omega
\end{equation}
and from the proof of Theorem~\ref{thm2.7}, it follows that we can differentiate under the integral. 
Thus,  
\begin{equation}\label{eq:3.6}
  \Psi_{k\,!x\,!n\,!\sigma}(x,\sigma)={c_1\over 2\pi}\,\!\int_{-\infty}^{\infty}P(\omega,\sigma)F(i\,\!\omega)|\omega|^p\,\!e^{-\omega^2\sigma^2/2}e^{i\,\!\omega\,\!x}\,d\omega
\end{equation}
where $P(\omega,\sigma)$ is a polynomial of $\sigma$ and $\omega$ whose degree on $\sigma$ is $n$ and on $\omega$ is $2n+k$. Let now $\sigma>T$ where $T$ is an arbitrary number greater than zero. Then clearly, $\exists\,M>0$ such that 
\begin{equation}\label{eq:3.7}
  \sup\Big\{\big| P(\omega,\sigma)e^{-\omega^2\sigma^2/2}\big|\ :\ \sigma>T, \omega\in(-\infty,\infty)\,\Big\}<M\, .
\end{equation}
From (\ref{eq:2.8}) it follows that for $\\omega|\geq 1$
\begin{equation}\label{eq:3.8}
  |F(i\,\!\omega)|\,\!|\omega|^p|e^{i\,\!\omega\,\!x}|<{c\,\!|\omega|^p\over |\omega|^m} = c\,\!|\omega|^{p-m}<c\,\!|\omega|^{-2l-1}\, .
\end{equation}
Also, if $|F(i\,\!\omega)|<c_2$ for $|\omega|\leq 1$, then  
\begin{equation}\label{eq:3.9}
   |F(i\,\!\omega)|\,\!|\omega|^p|e^{i\,\!\omega\,\!x}|<c_2\quad\ {\rm for\ }|\omega|\leq 1\, .
\end{equation}

Now from (\ref{eq:3.5}) - (\ref{eq:3.9}) it follows that for $\sigma>T$ and any $x$  
\begin{equation}\label{eq:3.10}
  |\Psi_{k\,!x\,!n\,!\sigma}(x,\sigma)| \leq {|c_1\,\!c_2|\,\!M \over 2\pi}+\big|c_1 \over 2\pi \big|\,\!2M \int_1^\infty {|c| \over |w|^{2l+1}}\,d\omega\, .
\end{equation}
Obviously since $l>0$, the right side of inequality (\ref{eq:3.9}) is a real number denoted by $D_1$. Thus, we have:   
\begin{equation}\label{eq:3.11}
  |\Psi_{k\,!x\,!n\,!\sigma}(x,\sigma)|\leq D_1\quad \forall\,\sigma>T\quad\ {\rm and\ any\ }x\, .
\end{equation}
Let now, $0\leq\sigma\leq TT$. Then, since $P(\omega,\sigma)$ is a polynomial of degree $(2n+k)$ on $\omega$,  there exist constants $\alpha$ and $\beta$ such that  
\begin{equation}\label{eq:3.12}
 |P(\omega,\sigma)|\leq \alpha|\omega|^{2n+k}+\beta \qquad {\rm for\ }0<\sigma<T\quad {and\ any\ }\omega\, .
\end{equation}
Then for $0\leq \sigma\leq T$
\begin{equation}\label{eq:3.13}
  \big|P(\omega,\sigma)\,\! F(i\,\!\omega)\,\!\omega^p\,\!e^{i\,\!\omega\,\!x}\big|\leq (\alpha|\omega|^{2n+k}+\beta)|\omega|^p\,\!|F(i\,\!\omega)|\, . 
\end{equation}
Clearly, there exists $M_2>0$ such that 
\begin{equation}\label{eq:3.14}
  \big|P(\omega,\sigma)\,\! F(i\,\!\omega)\,\!\omega^p\,\!e^{i\,\!\omega\,\!x}\big|<M_2\quad {\rm for\ }|\omega|<1\, .
\end{equation}
Also, from (\ref{eq:2.8}) and the fact that $md-2n-k-p>m-21-p>1$, it follows that the integral $\int_1^\infty(\alpha|\omega|^{2n+k}+\beta)|\omega|^p\,\!|F(i\,\!\omega)|\,d\omega$ converges. Let 
\begin{equation}\label{eq:3.15}
  \int_1^\infty(\alpha|\omega|^{2n+k}+\beta)|\omega|^p\,\!|F(i\,\!\omega)|\,d\omega=M_3,\ .
\end{equation}
Then, from (\ref{eq:3.9}), (\ref{eq:3.9})), (\ref{eq:3.9})), (\ref{eq:3.9})) and (\ref{eq:3.9})), it follows that
\begin{align}
&|\Psi_{k\,!x\,!n\,!\sigma}(x,\sigma)|=\Big|{c_1\over 2\pi}\,\!\int_{-\infty}^{\infty}P(\omega,\sigma)F(i\,\!\omega)|\omega|^p\,\!e^{-\omega^2\sigma^2/2}e^{i\,\!\omega\,\!x}\,d\omega\Big| \leq   \label{eq:3.16} \\
&2\big|{c_1\over 2\pi}\big|\,\!M_2+2\big|{c_1\over 2\pi}\big|\,\!M_3=D_2\quad {\rm for\ }0\leq \sigma \leq T\ \ {\rm and\ any\ }x\, .
\end{align}
Finally, (\ref{eq:3.11}) and (\ref{eq:3.16}) prove the Lemma for any $\sigma>0$. Since $\Psi_{k\,!x\,!n\,!\sigma}(x,-\sigma)$, the Lemma is true for any $\sigma$ and $x$.  
\end{proof}
\begin{lemma}\label{lem3.4}
If the conditions of Theorem~\ref{thm2.7} are satisfied for $k+2<2l$, then the following vector field is complete:  
\begin{align}
  \dot{x} & = -\Psi_{k\,\!x\,\!\sigma}(x,\sigma) \label{eq:3.17} \\
\dot{\sigma} & = -\Psi_{(k+1)\,\!x}(x,\sigma)\, ,\notag
\end{align} 
In particular, vector field (\ref{eq:3.4}) is complete.   
\end{lemma}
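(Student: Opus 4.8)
The plan is to deduce completeness of the field $V=\bigl(-\Psi_{k\,\!x\,\!\sigma},\,-\Psi_{(k+1)\,\!x}\bigr)$ from the criterion recorded after Definition~\ref{def3.1}: it suffices to show that the two components of $V$ together with all their first-order partials are uniformly bounded on the whole plane $\{(x,\sigma)\}$, for then $V$ satisfies a uniform Lipschitz condition and is complete. First I would dispose of the components themselves. Since $\Psi_{k\,\!x\,\!\sigma}$ and $\Psi_{(k+1)\,\!x}$ correspond, in the notation of Lemma~\ref{lem3.3}, to values of $2n+k$ equal to $k+2$ and $k+1$, both of which are at most $k+2<2l$, Lemma~\ref{lem3.3} gives directly that each component is uniformly bounded.

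The substance of the proof is the bound on the four entries of the Jacobian $DV$, and this is exactly where I expect the difficulty. A crude count of orders is too weak: $\partial_\sigma\Psi_{k\,\!x\,\!\sigma}=\Psi_{k\,\!x\,\!2\sigma}$ has $2n+k=k+4$, which falls outside the range $2l$ that Lemma~\ref{lem3.3} can reach. The identity that rescues the estimate is the parabolic relation $\Psi_\sigma=\sigma\,\Psi_{x\,\!x}$ supplied by part~(v) of Theorem~\ref{thm2.7} --- the same relation already exploited in Lemma~\ref{lem3.2}. Using it to trade each $\sigma$-derivative for $\sigma$ times two $x$-derivatives, I would rewrite every entry of $DV$ as a real-linear combination of terms of the shape $\sigma^{\,j}\Psi_{(k+2+j)\,\!x}$ with $j\in\{0,1,2\}$ (concretely $\partial_x V_1=-\sigma\Psi_{(k+3)\,\!x}$, $\partial_\sigma V_1=-\Psi_{(k+2)\,\!x}-\sigma^2\Psi_{(k+4)\,\!x}$, and so on).

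Each such term I would estimate in the frequency domain, following the proof of Lemma~\ref{lem3.3}: start from representation (\ref{eq:3.5}) and differentiate under the integral sign. The quantitative input is the weighted Gaussian inequality $\sup_{\sigma\ge 0}\sigma^{\,j}e^{-\omega^2\sigma^2/2}\le C_j\,|\omega|^{-j}$, valid for every $j\ge 0$. It shows that the integrand of $\sigma^{\,j}\Psi_{(k+2+j)\,\!x}$ is dominated, uniformly in $\sigma$, by $C\,|\omega|^{\,k+2}\,|F(i\,\!\omega)|\,|\omega|^{p}$; and because $m>p+1+2l$ together with $k+2<2l$ yields $k+2<m-p-1$, inequality (\ref{eq:2.8}) makes this majorant absolutely integrable. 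Thus the factors $\sigma^{\,j}$ with $j\ge 1$ absorb exactly the two surplus powers of $|\omega|$, reducing every Jacobian entry to an absolutely convergent integral of effective order $k+2$; this is precisely what makes the hypothesis $k+2<2l$ sharp. Differentiation under the integral is therefore legitimate for all $\sigma\ge 0$, including the boundary $\sigma=0$, where the weights $\sigma^{\,j}$ annihilate the otherwise divergent high-order contributions, and each entry of $DV$ is uniformly bounded. The symmetry $\Psi(x,\sigma)=\Psi(x,-\sigma)$ from Theorem~\ref{thm2.7}(iii) transfers all these bounds to $\sigma<0$.

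With $DV$ uniformly bounded on the plane, $V$ is globally Lipschitz, hence complete by the criterion after Definition~\ref{def3.1}; taking $k=0$ specializes this to the field (\ref{eq:3.4}). The single delicate point, as indicated above, is the passage to $\sigma=0$ in the differentiation under the integral sign; the weighted bound is the device that converts the naively over-order derivatives into convergent integrals and thereby justifies it.
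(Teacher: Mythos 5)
Your proof is correct, and its outer skeleton --- uniform bounds on the first partials of the field give a global Lipschitz constant, hence completeness by the criterion following Definition~\ref{def3.1} --- is exactly the paper's. But the decisive estimate is handled by a genuinely different (and more complete) argument. The paper's entire proof is one sentence: it asserts that the third-order partials of $\Psi$ appearing in the Jacobian of (\ref{eq:3.17}) are bounded \emph{by Lemma~\ref{lem3.3}}, and stops. As you correctly observe, that citation does not literally cover all four Jacobian entries: the entry $\Psi_{k\,x\,2\sigma}$ has weight $2n+k'=k+4$, and since $k+2$ and $2l$ are integers the hypothesis $k+2<2l$ guarantees only $k+3\le 2l$, not $k+4\le 2l$; when $k+2=2l-1$ this entry falls outside the range Lemma~\ref{lem3.3} can reach, so the paper's appeal is only valid under the slightly stronger hypothesis $k+4\le 2l$. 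Your repair --- rewriting each Jacobian entry via the relation $\Psi_\sigma=\sigma\Psi_{xx}$ of Theorem~\ref{thm2.7}(v) as a combination of terms $\sigma^{\,j}\Psi_{(k+2+j)\,x}$, then absorbing the surplus powers of $|\omega|$ in the frequency-domain representation (\ref{eq:3.5}) by the weighted bound $\sigma^{\,j}e^{-\omega^2\sigma^2/2}\le C_j|\omega|^{-j}$ --- is sound: the resulting majorant $C\,|\omega|^{\,k+2+p}\,|F(i\omega)|$ is independent of $\sigma$ and integrable by (\ref{eq:2.8}), because $m>p+1+2l$ and $k+2<2l$ give $k+2<m-p-1$; this justifies differentiation under the integral (including at $\sigma=0$) and yields uniform bounds on all entries, which symmetry (iii) extends to $\sigma<0$. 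So what the paper's argument buys is brevity, at the price of an application of Lemma~\ref{lem3.3} that is not justified under the stated hypothesis; what your argument buys is that the lemma as actually stated, with $k+2<2l$, is proved. The parabolic identity $\Psi_\sigma=\sigma\Psi_{xx}$, which the paper exploits only for the energy function of Lemma~\ref{lem3.2}, is precisely the extra ingredient that closes this gap, and your proof specializes to (\ref{eq:3.4}) by taking $k=0$ just as the paper's does.
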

\begin{proof}
As we mentioned earlier, the existence of a global Lipschitz constant guarantees completeness. Since the third order partial derivatives of $\Psi$ are bounded by Lemma~\ref{lem3.3}, there exists such a constant for the vector field (\ref{eq:3.17}) and our result follows.  
\end{proof}

Now we are ready to investigate the structure of curves 
$$
\{\,x,\sigma\ :\ {\partial^k\,\Psi_p^\theta(x,\sigma)\over \partial x^k} = 0\,\}, \quad k=0,1,\dots
$$
\begin{theorem}\label{thm3.2}
Under some generic assumptions (such that there is no points $(x,\sigma)$ satisfying
${\partial^k\,\Psi_p^\theta(x,\sigma)\over \partial x^k} = 0$, ${\partial^{k+1}\,\Psi_p^\theta(x,\sigma)\over \partial x^{k+1}}={\partial^{k+1}\,\Psi_p^\theta(x,\sigma)\over \partial x^k \partial\sigma}=0$  the set 
$$
M_0^{kp}\{\,x,\sigma\ :\ {\partial^k\,\Psi_p^\theta(x,\sigma)\over \partial x^k} = 0\,\}
$$
is a union of:  
(i) closed curves $C_j$ smooth and symmetric with respect to the the $x$-axis (and intersecting it at exactly two points);
(ii) curves $D_j$ diffeomorphic to the real line such that each $D_j$ is symmetric with respect to the $x$-axis and intersects it at exactly one point. 

\noindent (We assume that the conditions of Theorem~\ref{thm2.5} are satisfied and that $p\ge 0$.)  
\end{theorem}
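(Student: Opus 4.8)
The plan is to treat $M_0^{kp}$ as the zero level set of the single function $g:=\partial^k\Psi_p^\theta/\partial x^k$, and to exploit three structural features of $g$: it satisfies the heat-type relation $g_\sigma=\sigma g_{xx}$, it is even in $\sigma$, and it carries the monotone energy $L=g-xg_x$. By Remark~\ref{rem2.6} the $k$-th $x$-derivative of $\Psi_p^\theta$ is again of the type covered by Theorems~\ref{thm2.7} and \ref{thm2.3} (with $f$ replaced by $f^{(k)}$), so part (v) of Theorem~\ref{thm2.7} gives $g_\sigma=\sigma g_{xx}$ and Lemma~\ref{lem3.2} applies to $L$. First I would observe that the generic hypothesis says precisely that $0$ is a regular value of $g$: at every point of $M_0^{kp}$ the gradient $\nabla g=(g_x,g_\sigma)$ is nonzero. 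Hence $M_0^{kp}$ is a smooth $1$-manifold by the implicit function theorem, and the field (\ref{eq:3.17}), being orthogonal to $\nabla g$, has no equilibria on $M_0^{kp}$; each connected component is therefore a single trajectory of this field, which is complete by Lemma~\ref{lem3.4}. Since such a trajectory has as its closure the $1$-submanifold component itself and contains no equilibria, Lemma~\ref{lem3.1} shows each component $K$ is closed in $\mathbb R^2$, so $K$ is diffeomorphic either to $S^1$ (the compact case, giving the curves $C_j$) or to $\mathbb R$ (the non-compact case, giving the $D_j$).

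Next I would show that every component meets the $x$-axis. The reflection $R(x,\sigma)=(x,-\sigma)$ preserves $M_0^{kp}$ because $g$ is even in $\sigma$, and it carries trajectories to time-reversed trajectories; thus $R(K)$ is again a component, and if $K$ met the axis then $R(K)$ and $K$ would share that axis point, forcing $R(K)=K$. So it suffices to exclude a component lying entirely in $\{\sigma>0\}$ (the case $\{\sigma<0\}$ being symmetric). The PDE relation does this at any interior minimum $(x_0,\sigma_0)$ of $\sigma$ along $K$: there $g_x=0$, hence $g_\sigma\ne0$, and differentiating $g(x,\sigma(x))=0$ twice together with $g_\sigma=\sigma g_{xx}$ yields $\sigma''(x_0)=-1/\sigma_0<0$, impossible at a minimum. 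This is exactly the computation preceding Theorem~\ref{thm3.1}, and it immediately disposes of the circle case, where $\sigma$ attains a minimum on the compact $K$.

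For a non-compact $K$ the same computation forbids any interior minimum of $\sigma$, so $\sigma$ is eventually monotone toward each of the two ends, and I would control the ends with the energy. On $K$ one has $g=0$, so $L=-xg_x$, and along the flow $L$ is strictly increasing in $\{\sigma>0\}$ by Lemma~\ref{lem3.2}; hence its two end-limits are distinct. The limit-set analysis (strict Lyapunov function $L$, no equilibria, $K$ closed) shows each end either accumulates on $\{\sigma=0\}$ or escapes to infinity. The relation above forbids $\sigma\to\infty$ at both ends (that would require an intervening minimum of $\sigma$), while the localization and decay estimates of Theorems~\ref{thm2.6} and \ref{thm2.9}, reinforced by the Gaussian factor $e^{-\omega^2\sigma^2/2}$ in (\ref{eq:3.5}), force $xg_x\to0$, hence $L\to0$, at any end escaping with $|x|\to\infty$ or $\sigma\to\infty$. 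Two escaping ends therefore give equal limits $L\to0$, contradicting strict monotonicity; so at least one end accumulates on $\{\sigma=0\}$, and since $K$ is closed the limit point lies on $K$. Thus $K$ meets the $x$-axis, and by the paragraph above $K$ is $R$-invariant, i.e. symmetric about the $x$-axis.

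Finally, for the count, $R|_K$ is a nontrivial orientation-reversing involution of $K$: nontrivial because $M_0^{kp}\cap\{\sigma=0\}$ is the generically discrete zero set of $g(\cdot,0)=(f_p^\theta)^{(k)}$, so $K\not\subset\{\sigma=0\}$; and orientation-reversing because it is the restriction of a planar reflection to a curve meeting the mirror transversally. An orientation-reversing involution of $S^1$ has exactly two fixed points and one of $\mathbb R$ has exactly one, and these fixed points are precisely the transverse intersections of $K$ with the axis. Hence each circle $C_j$ meets the axis in exactly two points and each line $D_j$ in exactly one, as claimed. I expect the third step — the behaviour of the non-compact components at infinity, and in particular promoting the qualitative decay $g\to0$ to a rate strong enough to conclude $xg_x\to0$ and hence $L\to0$ at the escaping ends — to be the main obstacle; the compact case and the involution count are comparatively routine.
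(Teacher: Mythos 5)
Your skeleton matches the paper's: the implicit function theorem makes $M_0^{kp}$ a smooth $1$-manifold, the complete field (\ref{eq:3.17}) together with Lemma~\ref{lem3.1} and the tubular-flow openness argument identifies each component with the closed image of a single trajectory (a circle, or a line whose two ends escape to infinity), the reflection argument gives componentwise symmetry once an axis point exists, and the $\sigma''(x_0)=-1/\sigma_0$ computation handles the compact components. But the step you yourself flag as the main obstacle is where the proof genuinely breaks, and it breaks because Lemma~\ref{lem3.2} is applied to the wrong function. That lemma pairs the energy $L=h-xh_x$ with the flow preserving $h_x$. Taking $h=g=\partial^k\Psi/\partial x^k$, as you do, makes $g-xg_x$ monotone along level curves of $g_x$ --- not along your curves $K\subset\{g=0\}$, which are trajectories of the flow preserving $g$. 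Along that flow a direct computation (using $g_\sigma=\sigma g_{xx}$, $g_{x\sigma}=\sigma g_{xxx}$) gives $\tfrac{d}{dt}\left(g-xg_x\right)=x\sigma g_{xx}^2+\sigma g_xg_{xx}-x\sigma g_xg_{xxx}$, which has no definite sign, so your monotonicity claim is unjustified. The choice matched to your flow is $h=\partial^{k-1}\Psi/\partial x^{k-1}$ (this is exactly what the paper does), giving the energy $L=\partial^{k-1}\Psi/\partial x^{k-1}-xg$, whose derivative along the flow is $\sigma g_x^2\ge 0$.

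This correction also dissolves your ``main obstacle.'' On $K$ one has $g=0$, so the correct energy restricts to $\partial^{k-1}\Psi/\partial x^{k-1}$, not to $-xg_x$, and no decay \emph{rate} is needed. If $K$ avoided the axis, say $K\subset\{\sigma>0\}$, then $L$ is non-decreasing along $K$ and generically equals some $c\neq 0$ at some point; the forward tail (if $c>0$; backward if $c<0$) then stays in $\{\,|\partial^{k-1}\Psi/\partial x^{k-1}|\ge |c|\,\}$, which is compact by Theorem~\ref{thm2.11}, contradicting $|z(t)|\to\infty$, which you had already established from closedness of $K$. By contrast, the estimate your version needs --- $xg_x\to 0$ along escaping ends --- is not supplied by Theorems~\ref{thm2.6} and \ref{thm2.9}, which give only $g_x\to 0$ with no rate, so that gap is real and not merely technical bookkeeping. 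Your final count of axis intersections via the orientation-reversing involution (two fixed points on $S^1$, one on $\mathbb{R}$) is a clean equivalent of the paper's Remark~\ref{rem3.1} and is fine as stated.
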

\begin{remark}\label{rem3.1}
In Theorem~\ref{thm3.1} and in (i) of Theorem~\ref{thm3.2}, the fact that a closed curve is symmetric w.r.t. the $x$-axis trivially implies that this curve intersects the $x$-axis in exactly two points. Also, it is obvious that a curve diffeomorphic to the real line cannot intersect $x$-axis in more than one point if it is symmetric with respect to the $x$-axis. (Otherwise it would contain a closed curve). 
\end{remark}
\begin{proof}[Proof of Theorem~\ref{thm3.2}]
From the assumed generic conditions and the implicit function theorem, it follows that $M_0^{kp}$ is a one-dimensional sub-manifold of the $(x,\sigma)$ plane. Therefore, its connected components are either closed curves denoted by $C_j$ or curves $D_j$ diffeomorphic to the real line. Just as in Theorem~\ref{thm3.1}, one can prove that each curve $C_j$ is symmetric with respect to the $x$-axis (note that the symmetry of $M_0^{kp}$ about the $x$-axis does not immediately imply the symmetry of each of its components). This shows that (i) is satisfied. Let us prove (ii). Let $D_k$ be a component of $M_0^{kp}$ which is diffeomorphic to the real line. Let $x_0\in D_k$ be any point. Consider a trajectory $z(t)$ through $x_0$ of the vector field $V$ defined by (\ref{eq:3.17}). Since $V$ is complete by Lemma~\ref{lem3.4}, $z(t)$ is defined for all $t\in (-\infty,\infty)$. Also, since $\langle V(x,\sigma)\\nabla F(x,\sigma) \rangle = 0$ for every $x,\sigma$, where $F(x,\sigma)={\partial^{k}\,\Psi_p^\theta(x,\sigma)\over \partial x^{k}}$, the trajectory $z(t)$ is contained in $D_k$. Let $X$ be the union of all points of the trajectory $z(t)$, and let $\overline{X}$ be its closure. Obviously $\overline{X}\subset D_k$. Since $M_0^{kp}$ does not contain equilibrium points of the vector field $V$ and is a one-dimensional manifold, it follows from the tubular flow theorem [8], that any invariant set in $M_0^{kp}$  is open in $M_0^{kp}$ also. 

Since $\overline{X}$ is invariant, it is open in $M_0^{kp}$ and is, therefore, open in $D_k$ since $D_k$ is open in $M_0^{kp}$. Since $\overline{X}$ is open and closed in $D_k$, it coincides with $D_k$ because $D_k$ is connected.  From Lemma~\ref{lem3.1} it follows that $X$ is closed. This implies that $lim_{t\to\infty}|z(t)|=lim_{t\to-\infty}|z(t)|=\infty$. Suppose $X=D_k$ does not intersect $x$-axis, then, the $\sigma$-coordinate of $z(t)$ preserves the sign. Assume it is always positive, i.e., the $\sigma$-coordinate of $z(t)$ is greater than zero for every $t$. In Lemma~\ref{lem3.2}, let $\Psi(x,\sigma)={\partial^{k-1}\,\Psi_p^\theta(x,\sigma)\over \partial x^{k-1}}$, then the function 
$$
L(x,\sigma)=\Psi(x,\sigma)-x\,\!\Psi_x(x,\sigma)={\partial^{k-1}\,\Psi_p^\theta(x,\sigma)\over \partial x^{k-1}}-x\,\!{\partial^{k}\,\Psi_p^\theta(x,\sigma)\over \partial x^{k}}={\partial^{k-1}\,\Psi_p^\theta(x,\sigma)\over \partial x^{k-1}}
$$
is non- decreasing on the trajectory $X$ as $T\to\infty$ Let $(x,\sigma)=z(t)$ be an arbitrary point on $X$ such that ${\partial^{k-1}\,\Psi_p^\theta(x,\sigma)\over \partial x^{k-1}}=c \neq 0$. For example, let $c > 0$. Then, $c_1=\lim_{t\to\infty}{\partial^{k-1}\,\Psi_p^\theta(x,\sigma)\over \partial x^{k-1}}\geq c>0$. However, since $\lim_{t\to\infty}|z(t)|=\infty$, we should have $c_1=0$ according to Theorem~\ref{thm2.11}. Indeed, the set of all points $(x,\sigma)$ such that ${\partial^{k-1}\,\Psi_p^\theta(x,\sigma)\over \partial x^{k-1}}\geq c$ is bounded on $x,\sigma$ by Theorem~\ref{thm2.11}. However, this inequality is satisfied for all $t>t_0$ and the set $\cup\{\,z(t), t\geq t_0\,\}$ is unbounded since $|z(t)|\to\infty$ as $t\to \infty$. 
Similarly, we come to a contradiction assuming $c<0$. Thus, $X=D_k$ does intersect the $x$-axis. This can happen at only one point according to Remark~\ref{rem3.1}.   
\end{proof}
Now we have established all the theoretical background necessary to construct the trees for the contours ${\partial^{k}\,\Psi_p^\theta(x,\sigma)\over \partial x^{k}}=c$. In the case where $c=0$, $\theta=e, p=0$, these are zero crossings of the Gaussian convolved with ${d^k\,\!f \over dx^k(x)}$.  The connections of our results with fractional calculus seem to be very promising and interesting. In fact, we obtain the whole continuum of trees which give us additional means to characterize signals. The method of convolution with a Gaussian has been applied successfully in a number of papers to edge detection, vision and pattern recognition. See, for example [11] and [12]. Notice that if instead of taking convolutions of $f(x)$ with $\rho^{p+1}\Lambda_p^\theta(x\rho)$, $\theta=e,o$, we take convolutions with $\Lambda_{\alpha\,\!\beta\,\!p}(x)=\big(\alpha\Lambda_p^e(x)+\beta\Lambda_p^o(x)\big)\rho^{p+1}$  where $\alpha,\beta$ are arbitrary parameters, we obtain a two-parametric family of kernels, and all the results obtained in this section are still true for the functions defined via kernels $\Lambda_{\alpha\,\!\beta\,\!p}(x)$.  
\begin{theorem}\label{thm3.3}
The results of Theorems~\ref{thm2.2}-\ref{thm2.12}  and \ref{thm3.1}-\ref{thm3.2} may be generalized for the convolutions based on kernels $\Lambda_{\alpha\,\!\beta\,\!p}(x)$.  
\end{theorem}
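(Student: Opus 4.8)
The plan is to exploit the manifest linearity of the whole construction. Writing $\Lambda_{\alpha\beta p}(x,\rho)=\alpha\,\rho^{p+1}\Lambda_p^e(x\rho)+\beta\,\rho^{p+1}\Lambda_p^o(x\rho)$, the convolution decomposes as $\phi_{\alpha\beta p}(x,\rho)=f(x)*\Lambda_{\alpha\beta p}(x,\rho)=\alpha\,\phi_p^e(x,\rho)+\beta\,\phi_p^o(x,\rho)$, and hence $\Psi_{\alpha\beta p}(x,\sigma)=\alpha\,\Psi_p^e(x,\sigma)+\beta\,\Psi_p^o(x,\sigma)$. By linearity of the Fourier transform together with (\ref{eq:2.5}), the multiplier attached to $\Lambda_{\alpha\beta p}$ is $\alpha c\big(s^p+(-s)^p\big)+\beta d\big(s^p-(-s)^p\big)$ times $e^{s^2/(2\rho^2)}$, whose modulus is still bounded by an envelope $C|\omega|^p e^{-\omega^2/2}$ with $C=|\alpha c|+|\beta d|$. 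Every estimate in Section~2 used the kernel only through this $|\omega|^p$ growth, the Gaussian decay, and the bound (\ref{eq:2.8}) on $F(s)$; since the envelope is unchanged, the proofs apply verbatim.

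First I would dispatch the results that are preserved under finite linear combinations. Absolute integrability and smoothness of $\Lambda_{\alpha\beta p}$, and hence Lemma~\ref{lem2.1}, follow from the triangle inequality applied to the two summands. The fractional-derivative interpretation (Theorem~\ref{thm2.2}, Remark~\ref{rem2.4}) and analyticity in $p$ (Theorem~\ref{thm2.12}) transfer because $\Lambda_{\alpha\beta p}$ is a fixed linear combination of entire functions carrying the multiplier $\alpha s^p+\beta(-s)^p$. The uniform-convergence statements (Theorems~\ref{thm2.3}, \ref{thm2.4}, \ref{thm2.5}, \ref{thm2.8}) transfer immediately, since a fixed linear combination of uniformly convergent families converges uniformly to the corresponding combination of limits. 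The only place where ``add the even and odd results'' is not literally valid is the compactness Theorems~\ref{thm2.10} and \ref{thm2.11}, because a sublevel set of a sum is not built from the sublevel sets of the summands. I would instead route these through the decay-at-infinity form of Theorems~\ref{thm2.6} and \ref{thm2.9}: applying each with tolerance $\epsilon/(2|\alpha|)$ and $\epsilon/(2|\beta|)$ and taking the larger threshold gives $|\phi_{\alpha\beta p}|<\epsilon$ outside a bounded region, after which the closedness-plus-boundedness argument of Theorem~\ref{thm2.10} yields compactness of $D_\epsilon$ unchanged.

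The substantive checks lie in the topological Theorems~\ref{thm3.1} and \ref{thm3.2}, whose arguments rest on two structural facts rather than on the explicit kernel. The first is the $\sigma$-evenness $\Psi_{\alpha\beta p}(x,\sigma)=\Psi_{\alpha\beta p}(x,-\sigma)$, which survives because each summand is even in $\sigma$. The second, and the one I expect to be the real obstacle to verify honestly, is the diffusion relation $\Psi_\sigma=\sigma\,\Psi_{xx}$ of Theorem~\ref{thm2.7}(v): it powers the minimality contradiction preceding Theorem~\ref{thm3.1}, the monotonicity of the Lyapunov function $L=\Psi-x\Psi_x$ in Lemma~\ref{lem3.2}, and the unboundedness argument in Theorem~\ref{thm3.2}. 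Here the key point is that the Fourier multiplier $\alpha F_p^e+\beta F_p^o$ is independent of $\sigma$, so differentiating (\ref{eq:2.29}) in $\sigma$ pulls down the factor $s^2\sigma$ exactly as $\partial_x^2$ pulls down $s^2$; thus $\Psi_\sigma=\sigma\,\Psi_{xx}$ holds for any $\sigma$-free multiplier and in particular for the mixed kernel. With this relation confirmed, the uniform bounds on the derivatives (Lemma~\ref{lem3.3}) again follow from the envelope estimate, the completeness of the field (Lemma~\ref{lem3.4}) follows from those bounds, and the trajectory and symmetry arguments of Theorems~\ref{thm3.1} and \ref{thm3.2} carry over without change, completing the generalization.
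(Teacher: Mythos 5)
Your proposal is correct, and it in fact supplies far more than the paper does: the paper's entire ``proof'' of Theorem~\ref{thm3.3} is the one-sentence assertion that the proofs of Theorems~\ref{thm2.2}--\ref{thm2.12} and \ref{thm3.1}--\ref{thm3.2} ``can be only slightly modified,'' with no indication of which modifications are needed or where the modification is more than cosmetic. Your argument substantiates that assertion along the intended route, replacing the multiplier $c\big(s^p\pm(-s)^p\big)e^{s^2/(2\rho^2)}$ by $\big(\alpha c\,(s^p+(-s)^p)+\beta d\,(s^p-(-s)^p)\big)e^{s^2/(2\rho^2)}$ and noting that every estimate in Sections~\ref{sec2} and \ref{sec3} uses the kernel only through the envelope $C|\omega|^p$ times Gaussian decay together with the bound (\ref{eq:2.8}). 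The two points you flag as non-automatic are exactly the right ones: first, the compactness results (Theorems~\ref{thm2.10}, \ref{thm2.11}) do not follow by superposing sublevel sets, and your detour through the uniform decay statements of Theorems~\ref{thm2.6} and \ref{thm2.9} with tolerances $\epsilon/(2|\alpha|)$, $\epsilon/(2|\beta|)$ (the cases $\alpha=0$ or $\beta=0$ reducing to the pure kernels) is a correct repair; second, the identity $\Psi_\sigma=\sigma\Psi_{xx}$ of Theorem~\ref{thm2.7}(v), on which Lemma~\ref{lem3.2} and hence the trajectory arguments of Theorems~\ref{thm3.1}--\ref{thm3.2} rest, must be re-verified for the mixed kernel, and your observation that it holds for any $\sigma$-independent Fourier multiplier settles this. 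That last observation is also the germ of the paper's own subsequent generalization, Theorem~\ref{thm3.4}, where the multiplier $\mu(s,p)$ is abstracted away entirely; so your proof not only fills the gap left at Theorem~\ref{thm3.3} but anticipates the structure of the more general result.
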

The proofs of Theorems Theorems~\ref{thm2.2}-\ref{thm2.12}  and \ref{thm3.1}-\ref{thm3.2} can be only slightly modified to obtain the results for  $\Lambda_{\alpha\,\!\beta\,\!p}(x)$. Theorem~\ref{thm3.3} allows further generalization.  

Assume that $f(x)$ satisfies the conditions of Theorem~\ref{thm2.3}. Assume also that $g(x,\rho)$, $\rho>0$ is infinitely differentiable on $x$, and let $\mu(s,p)$ be a family of functions continuous in both variables, $s =i\,\!\omega$, $p\ge 0$ and suppose that the following conditions are satisfied:  
(1) $g(x,\rho)$ and $G(s,\rho)$ form a Fourier pair;\newline
\medskip
 
(2) For any $\rho>0$ and $p\ge0$, the product $G(s,\rho)\cdot \mu(s,p)$ is absolutely integrable on $(-i\,\!\infty,i\,\!\infty)$ as a function of $s=i\,\!\omega$; \newline
\medskip

(3) On any finite interval $(-i\,\!T,i\,\!T)$, $G(s,\rho)\to G(s,\rho_0)$ uniformly as $\rho\to \rho_0$. Limits also exist and are defined for $\rho=0$ and $\rho=\infty$, and are denoted by $G(s,0)$ and $G(s,\infty)$. The latter two functions are smooth and absolutely integrable as functions of $s=i\,\!\omega$. \newline
\medskip

(4) If we replace $\rho$ with ${1 \over \sigma}$, $G_1(s,\sigma)=G(s,\rho)$, $G_1(s,0)=G(s,\infty)$, then $G_1(s,\sigma)$ has continuous partial derivatives in $s$ and $\sigma$ at $\sigma=0$.\newline
\medskip 

Define $g(x,\rho,p)$ as 
\begin{equation}\label{eq:3.18}
  g(x,\rho,p)={1\over 2\pi\,\!i}\,\int_{-\infty}^{\infty}G(s,\rho)\mu(s,p)e^{s\,\!x}\, ds\, .
\end{equation}
Then, with the above assumptions, almost without change in the proofs we can show that: 
\begin{theorem}\label{thm3.4}
If $\Psi_G(x,\rho,p)$ is defined by:
\begin{equation}
  \Psi_G(x,\rho,p)=f(x)*g(x,\rho,p)\, ,\notag
\end{equation}
then
(i) For any $\rho>0$ we have uniform convergence in $x$:    
$$
\lim_{y\to \rho_0}\Psi_G(x,\rho,p)=\Psi_G(x,\rho_0,p)\, .
$$
(ii) There are functions $\Psi_G(x,0,p)$ and $\Psi_G(x,\infty,p)$ such that uniformly in $x$ we  have  
$$
\lim_{y\to \infty}\Psi_G(x,\rho,p)=\Psi_G(x,\infty,p) \qquad \lim_{y\to \infty}\Psi_G(x,\rho,p)=\Psi_G(x,0,p),\ .
$$
(iii) Let function $f(x)$ satisfy the conditions of Theorem~\ref{thm2.7}. If we define $\Psi_G^1(x,\sigma,p)$ as  $\Psi_G(x,{1\over y},p)$ for $\sigma>0$ and $\Psi_G^1(x,\sigma,p) = \Psi_G(x,\infty,p)$ for $\sigma=0$ and  $\Psi_G^1(x,\sigma,p)=\Psi_G^1(x,-\sigma,p)$ for $\sigma<0$, then $\Psi_G^1(x,\sigma,p)$ is a correctly  defined function having partial derivatives of orders $k$ in $x$ and $n$ in $\sigma$  where  $2n+k \le 2l$ ($l$ is defined in Theorem~\ref{thm2.7}.\newline
\medskip

iv) We also have the following Fourier pairs:  
$$ 
\Psi_G^1(x,\sigma,p)\ \sim\ \mu(s,p)F(s)G_1(s,\sigma)\quad p\ge 0
$$
(where for $\sigma<0$, $G_1(s,\sigma)$ is defined as $G_1(s,-\sigma)$).\newline
\medskip
 
(v) Convergence in (i) and (ii) is uniform in $p$ on the segment $p\in [0,p_0]$,  where $p_0$ is such that the conditions of Theorem~\ref{thm2.3} are satisfied for $p=p_0$.\newline
\medskip
 
(vi) The set $M$ defined by
$$          
M=\{\,x,\sigma,p\ :\ {\partial^k \over \partial x^k}\,\Psi_G^1(x,\sigma,p)=c,\ c=0,\ p \in [0,p_0]\,\}
$$
where $k+p+1<m$ and $p_0$ is, as in (v), compact.  
\end{theorem}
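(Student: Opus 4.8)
The plan is to read Theorem~\ref{thm3.4} as the abstract envelope of Theorems~\ref{thm2.3}--\ref{thm2.11}, in which the explicit multiplier $\bigl(s^p\pm(-s)^p\bigr)e^{s^2\sigma^2/2}$ is replaced by $G_1(s,\sigma)\mu(s,p)$ and the concrete analytic facts about that multiplier are replaced by hypotheses (1)--(4). The cornerstone is (iv): since $g(x,\rho,p)$ is \emph{defined} by the inverse transform (\ref{eq:3.18}), hypothesis (1) together with the absolute integrability in (2) shows that $G(s,\rho)\mu(s,p)$ is the Fourier transform of $g(x,\rho,p)$, and the convolution theorem then yields $\Psi_G(x,\rho,p)\sim F(s)G(s,\rho)\mu(s,p)$, the $\sigma$-version following after the substitution $\rho=1/\sigma$. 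Every later part is read off the representation
\[
\Psi_G^1(x,\sigma,p)=\frac{1}{2\pi i}\int_{-i\infty}^{i\infty}F(s)\,G_1(s,\sigma)\,\mu(s,p)\,e^{sx}\,ds,
\]
so I would establish (iv) first and invoke it throughout.

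For (i), (ii) and (v) I would reuse the $\epsilon/2$ splitting of Theorems~\ref{thm2.3}--\ref{thm2.5} and \ref{thm2.8}. Split the integral into a central block $s\in[-iT,iT]$ and two tails; by (2) the tails are $<\epsilon/2$ uniformly, while on the compact block hypothesis (3) supplies the uniform convergence $G_1(s,\sigma)\to G_1(s,\sigma_0)$, together with the existence, smoothness, and integrability of the endpoint multipliers $G(s,0)$ and $G(s,\infty)$ needed for the $\rho\to0$ and $\rho\to\infty$ limits. For the uniformity in $p$ of (v) I would bound the central contribution independently of $p$ using continuity of $\mu(s,p)$ on the compact set $[-iT,iT]\times[0,p_0]$, exactly as in Theorem~\ref{thm2.8}.

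Part (iii) is the analogue of Theorem~\ref{thm2.7}, proved by differentiating (\ref{eq:3.18}) under the integral sign. The $k$ $x$-derivatives contribute a factor $s^k$, and $s^kF(s)$ is bounded by Proposition~\ref{prop:2.1} (since $m>p+1+2l$), while $G_1\mu$ is absolutely integrable by (2), so the differentiated integrand is dominated; the regularity in $\sigma$, and in particular the continuity of the mixed derivatives up to and across $\sigma=0$, is precisely the content of hypothesis (4). The evenness $\Psi_G^1(x,-\sigma,p)=\Psi_G^1(x,\sigma,p)$ is built into the definition, so the range $2n+k\le2l$ follows once the $x$- and $\sigma$-differentiations are separately justified.

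Finally, (vi) is the abstract form of Theorems~\ref{thm2.10}--\ref{thm2.11}, and it is where the real work lies. First I would prove the localisation statement: for every $\epsilon>0$ there exist $T,Q>0$ with $\bigl|\partial_x^k\Psi_G^1(x,\sigma,p)\bigr|<\epsilon$ whenever $|x|>T$ or $\sigma>1/Q$, \emph{uniformly} for $p\in[0,p_0]$, assembled from (i), (ii), (v) and Riemann--Lebesgue decay in $x$ just as Theorem~\ref{thm2.9} was built from Theorems~\ref{thm2.3}--\ref{thm2.5} and \ref{thm2.8}. Since $\Psi_G^1$ is even and tends to $0$ as $\sigma\to\infty$ and as $|x|\to\infty$, the relevant set is then confined to a box in $(x,\sigma)$ and to the compact $p$-range; closedness follows from the continuity of (iii), and a finite $\Delta$-interval subcover of $[0,p_0]$ (as in Theorem~\ref{thm2.10}) upgrades the pointwise localisation to a uniform one, yielding compactness. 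The passage from $k=0$ to general $k$ is the substitution $f\mapsto d^kf/dx^k$ of Theorem~\ref{thm2.11}, legitimate under $k+p+1<m$. I expect this localisation to be the main obstacle: in Section~2 the Gaussian factor $e^{-\omega^2\sigma^2/2}$ simultaneously gave the decay as $\sigma\to\infty$, the freedom to differentiate repeatedly, and crisp uniform bounds, whereas here all decay must be extracted from the bare integrability in (2) and the polynomial decay of $F$ in Proposition~\ref{prop:2.1}. The two delicate points are verifying that ``$s^kF(s)$ bounded times $G_1\mu$ integrable'' dominates every differentiated integrand uniformly in $p$ — so that $\mu(s,p)$, which may grow in $s$, is always paired with enough decay from $F$ — and confirming that the endpoint multipliers of hypothesis (3) actually produce the $\sigma\to\infty$ decay the Gaussian previously supplied for free.
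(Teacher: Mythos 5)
Your proposal matches the paper's approach exactly: the paper offers no separate proof of this theorem, stating only that ``with the above assumptions, almost without change in the proofs we can show'' the result --- that is, the proofs of Theorems 2.3--2.11 (and 3.1--3.2) are to be transplanted with the abstract multiplier $G_1(s,\sigma)\mu(s,p)$ in place of $\bigl(s^p\pm(-s)^p\bigr)e^{s^2\sigma^2/2}$, which is precisely what you execute, part by part. Your closing identification of the delicate points (uniform-in-$\rho$ tail bounds and the $\sigma\to\infty$ decay that the Gaussian factor previously supplied for free) is more than the paper records, but it does not depart from the paper's route.
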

Thus far, we have considered cases when $\mu(s,p)=s^p+(-s)^p$ or $\mu(s,p)=s^p-(-s)^p$ and $G(s,\rho)=e^{s^2\over 2\rho^2}$\, .  

\section{Bifurcations of the Scale Space Contours}
  \label{sec4}
In this section, we are going to apply theory developed in Sections~\ref{sec2} and \ref{sec3} to the  analysis of bifurcations of contours as $p$ changes. We will rely especially on compactness  results, such as Theorems~\ref{thm2.10}, \ref{thm2.11}, \ref{thm3.1}, \ref{thm3.2} and \ref{thm3.3}. The main reason for this is that the  Morse theory which we use for bifurcation points analysis is not applicable in general to non-compact manifolds. As we have shown for every $\alpha$, $\beta$ and $p$, one can define kernels  $\alpha\Lambda_p^e(x\rho)+\beta\Lambda_p^o(x\rho)$, and if $f(x)$ is a function satisfying certain conditions, then one can define a convolution  
\begin{equation}\label{eq:4.1}
  \rho^{p+1}\big(\alpha\Lambda_p^e(x\rho)+\beta\Lambda_p^o(x\rho) \big)*f(x)=\phi_{\alpha\,\!\beta\,\!\pi}(x,\rho)\, .
\end{equation}
Proceeding as in Theorem~\ref{thm2.7}, one can define a function $\Psi_{\alpha\,\!\beta\,\!\pi}(x,\rho)$ from the function $\phi_{\alpha\,\!\beta\,\!\pi}(x,\rho)$. Now, if we fix an arbitrary $c$ and consider the equation $\Psi_{\alpha\,\!\beta\,\!\pi}(x,\sigma)=c$ for fixed $\alpha$, $\beta$, $p$, we obtain the contour from which we can construct the tree as in [3]. Function $\Psi_{\alpha\,\!\beta\,\!\pi}(x,\sigma)$ can be defined for arbitrary $\alpha$, $\beta$, $p$, $x$, and $\sigma$ real or complex.  If $\alpha$, $\beta$, $p$, $x$, $\sigma$, and $c$ are complex, then the equation  
\begin{equation}\label{eq:4.2}
  \Psi_{\alpha\,\!\beta\,\!\pi}(x,\sigma)=c
\end{equation}
defines an 8-dimensional surface in a 10-dimensional Euclidean space. In the real case it is a  4-dimensional surface in a 5-dimensional Euclidean space. It is clear that if $\alpha_l=\lambda\,\!\alpha$, $\beta_1=\lambda\,\!\beta$, then the equation $\Psi_{\alpha_1\,\!\beta_1\,\!\pi}(x,\sigma)={c \over \lambda}$ defines the same curve as (\ref{eq:4.2}). Now let us fix real numbers $\alpha$, $\beta$, $p$ ,$c$ and construct trees $TT$ and $TW$ for the curves  determined by (\ref{eq:4.2}) as explained in [3]. Clearly, if for a different set of parameters, trees $TT$ are equivalent, then the corresponding curves are topologically equivalent and vice-versa.  Thus, we have a four-dimensional set of parameters $(\alpha,\beta,p,c)\in {\mathbb R}^4$ and there are contours $K(\alpha,\beta,p,c)$ defined by (\ref{eq:4.2}). If a signal $f(x)$ is transient (vanishes outside a finite interval), then functions $\Psi_{\alpha\,\!\beta\,\!\pi}(x,\sigma)$ are defined for every $p$, $\alpha$ and $\beta$. Otherwise, we assume $p\ge 0$ for $\beta=0$ and $p>0$ for $\beta\neq 0$.  

\begin{definition}\label{def4.1}
Let $M$ be a set of parameters, $M\subset P$. Then a point $m\in M$ is a bifurcation point in $M$ if and only if for every open set $V$, $m\in V\subset M$ , there is a point $m'\in V$ such that contour $K(m)$ defined by (\ref{eq:4.2}) for $m$ is not topologically equivalent to $K(m')$.  
\end{definition}
Notice that if $M\subset N$, then a point $m\in M$ may be a bifurcation point in $N$ but not in $M$. Bifurcation points divide $M$ into regions such that every two parameters of the same region define two topologically equivalent contours (and therefore equivalent trees). Thus, instead of trying the impossible - to construct a tree for every parameter value, we can construct a tree for each region and obtain the maximal set of invariants for the given signal. In this section, we will take a somewhat simplified approach. Let us define a one-parametric family of kernels : 
\begin{equation}
  \alpha\Lambda_p^e(x\rho)+\beta\Lambda_p^o(x\rho)\, ,
\end{equation}
where $\alpha$ and $\beta$ are fixed, and $p$ is a varying parameter. Also, let us fix $c\neq 0$.  

First, we will identify all bifurcation points on the $p$-axis. This will give us the maximal set of trees for the given signal and $\alpha$ and  $\beta$. In particular, when $\alpha=0$  $(\beta=0)$ this will give us all odd (even) tree invariants. Then, we will fix $p$, $\alpha$, and $\beta$ and vary $c$. This will give us bifurcation points on the $c$-axis. Some results will also be obtained using the Morse theory approach (see [13]). Computer calculations based on a Fourier transform approach allow us to find all the bifurcation points and all the non-equivalent trees within a certain range of $p$. It will be convenient to establish a convention that all signals $f(x)$ considered in this section (unless otherwise stated) satisfy conditions of Theorem~\ref{thm2.3}, where $p$ is the maximal value of parameter in the range. All the results are also true for any piece-wise $C^\infty$ transient signal, so our assumption is not too restrictive. 

Consider now an equation $F(x,v)=0$ which defines a manifold $M$ of dimension $n$ in $\mathbb{R}^{n+1}$ (assuming $\nu$ is a one-dimensional parameter and $x$ is an $n$-dimensional vector). Consider a projection $\pi:M\to \mathbb{R}$ of $M$ onto the $\nu$-coordinate. It turns out that under certain assumptions, bifurcation points (i.e. points $\nu_0$ such that for fixed $v$, contour $C_\nu$, defined by $F(x,\nu)=0$ changes topologically when $\nu$ passes through $\nu_0$) coincide with $v$ - coordinates of critical points of function $\pi$. By the critical point of a function on a manifold, we mean a point at which the gradient of the function on this manifold is zero. If $M$ is compact, the result quoted above is very well-known. Unfortunately, it is not true for a non- compact manifold $M$. In our case, when $\nu$ is a parameter $p$ or $c$ and may get arbitrarily large, the manifold defined by equations of type (\ref{eq:4.2}) is not compact. We, however, can establish the results quoted above since functions $\Psi_{\alpha\,\!\beta\,\!\pi}(x,\sigma)$ possess many nice properties established in Sections~\ref{sec2} and \ref{sec3}.  

We will use the following notation: 
$$
M^a = \{\, p\in M\ :\  f(p)=a\,\}
$$
where $f:M\to\mathbb{R}$ is a real-valued function on a manifold $M$. In the following result (see [13], Theorem~3.1) will be useful for our purposes: 
\begin{align}
&{\rm Let\ }f:M\to\mathbb{R}\ {\rm be\ a\ real,\ smooth\ function\ on\ a\ manifold\ }M. \label{eq:4.3} \\ 
&{\rm Let\ }a<b,\ {\rm and\ suppose\ that\ the\ set\ }f^{-1}[a,b] {\rm consisting\ of\ all}  \notag \\
&p\in M\ {\rm with\ } a\leq  p \leq b\ {\rm is\ compact\ and\ contains\ no\ critical }  \notag\\
&{\rm points\ of\ }f.\ {\rm Then\ }M_a\ {\rm is\ diffeomorphic\ to\ } M_b.  \notag
\end{align}
 (Our notations are slightly different from those of Milnor, but this result is contained in [13]).  

Assume from now on that $\alpha$ and $\beta$ are fixed and denote $\Psi_{\alpha\,\!\beta\,\!\pi}(x,\sigma)$ by $\Psi(x,\sigma,p)$. Then, equation (\ref{eq:4.4}) below defines a two-dimensional manifold $M_c$.
\begin{equation}\label{eq:4.4}
  {\partial^k\,\!\Psi(x,\sigma,p) \over \partial x^k}=c \quad k=0,1,2,\dots
\end{equation}
in a three-parameter space $(x,\sigma,p)\in \mathbb{R}^3$ if we make a generic assumption (adopted  henceforth) that there is no point satisfying (\ref{eq:4.4}), and such that all three partial derivatives of ${\partial^k\,\!\Psi(x,\sigma,p) \over \partial x^k}$ by  $x$, $\sigma$ and $p$ are simultaneously zero at this point.  
\begin{theorem}\label{thm4.1}
(a)  Let $c \neq 0$ and $M_c$ be a manifold in $\mathbb{R}^3$  defined by (\ref{eq:4.4}). Suppose that $c$ is  fixed and that $p$ varies. If $p_0$ is a bifurcation point, then at some point $(x_0,a_0,p_0)\in M_c$ we have:   
\begin{equation}\label{eq:4.5}
  {\partial^{k+1}\,\!\Psi(x_0,\sigma_0,p_0) \over \partial x^{k+1}}={\partial^{k+1}\,\!\Psi(x_0,\sigma_0,p_0) \over \partial x^{k}\,\!\partial \sigma}=0.
\end{equation}
(b) Let $p$ be fixed and let $c$ vary on an interval not containing zero. Then, if $c_0$ is a  bifurcation point in a $c$-ddomain, there exists a point $ x_0,\sigma_0,p$ such that (\ref{eq:4.5}) holds for $p=p_0$  
\end{theorem}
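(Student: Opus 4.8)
The plan is to recognize both claims as instances of the classical Morse principle that the diffeomorphism type of a level set is constant on parameter intervals free of critical values, the only genuine difficulty being the non-compactness of the manifolds involved. I would argue by contraposition in each part. Set $G(x,\sigma,p)=\frac{\partial^{k}\Psi(x,\sigma,p)}{\partial x^{k}}-c$, so that $M_c=\{G=0\}$, and let $\pi\colon M_c\to\mathbb{R}$ be the projection $(x,\sigma,p)\mapsto p$. The standing generic hypothesis is exactly that $\nabla G\neq 0$ on $M_c$, so $M_c$ is a smooth $2$-manifold and a point is critical for $\pi$ precisely when the ambient gradient $\nabla G=(G_x,G_\sigma,G_p)$ is parallel to $(0,0,1)$, i.e. when $G_x=G_\sigma=0$. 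Since
\[
G_x=\frac{\partial^{k+1}\Psi}{\partial x^{k+1}},\qquad G_\sigma=\frac{\partial^{k+1}\Psi}{\partial x^{k}\,\partial\sigma},
\]
this is identical to condition (\ref{eq:4.5}). Thus (a) reduces to the assertion that every bifurcation value of $\pi$ is a critical value of $\pi$.

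The crux is to supply the compactness hypothesis that the non-compact setting lacks. Because $c\neq 0$, I would apply Theorem~\ref{thm2.11} (Theorem~\ref{thm2.10} in the case $k=0$) with $\epsilon=|c|$: the set of $(x,\sigma,p)$ with $\bigl|\frac{\partial^{k}\Psi}{\partial x^{k}}\bigr|\ge|c|$ and $p$ in any bounded interval $[a,b]$ is compact. Since $M_c\cap\pi^{-1}[a,b]$ is a closed subset of this set, $\pi^{-1}[a,b]$ is itself compact for every bounded interval. This is precisely the ingredient needed to invoke the Morse comparison (\ref{eq:4.3}).

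Now suppose, contrary to (a), that no point of $M_c$ over $p_0$ satisfies (\ref{eq:4.5}), i.e. $p_0$ is a regular value of $\pi$. The critical set of $\pi$ is closed in $M_c$, so its intersection with the compact slab $\pi^{-1}[p_0-1,p_0+1]$ is compact, and hence its image under $\pi$ is a compact, therefore closed, set of critical values not containing $p_0$. Consequently there is $\delta>0$ with $[p_0-\delta,p_0+\delta]$ free of critical values while $\pi^{-1}[p_0-\delta,p_0+\delta]$ is compact. Applying (\ref{eq:4.3}) on this slab --- equivalently, using the flow of the normalized gradient of $\pi$, which carries one level contour diffeomorphically onto another since the slab is compact and contains no critical points --- shows that the contours $C_p$ for $p\in[p_0-\delta,p_0+\delta]$ are all diffeomorphic, in particular topologically equivalent. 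Hence $p_0$ is not a bifurcation point, establishing (a).

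For (b) the same scheme runs with $p$ fixed and $c$ as the Morse parameter: the relevant function is $H(x,\sigma)=\frac{\partial^{k}\Psi(x,\sigma,p)}{\partial x^{k}}$ on the $(x,\sigma)$-plane, its level set $\{H=c\}$ is the contour, and its critical points $H_x=H_\sigma=0$ are again exactly the points satisfying (\ref{eq:4.5}). Since $c$ ranges over an interval $[a,b]$ with $0\notin[a,b]$, Theorem~\ref{thm2.11} with $\epsilon=\min_{[a,b]}|c|>0$ renders $H^{-1}[a,b]$ compact, and the identical contrapositive argument forces a critical value of $H$ at any bifurcation value $c_0$. The main obstacle throughout is exactly the non-compactness emphasized in the text: without the decay estimates of Section~\ref{sec2} the Morse comparison of level sets is vacuous, so the decisive step is the conversion of the hypothesis $c\neq0$ into compactness of $\pi^{-1}[a,b]$ (respectively $H^{-1}[a,b]$) via Theorems~\ref{thm2.10} and~\ref{thm2.11}; a secondary point needing care is the closedness of the critical-value set near $p_0$, for which the slab's compactness is used a second time.
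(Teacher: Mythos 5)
Your proposal is correct and follows essentially the same route as the paper: both arguments identify condition (\ref{eq:4.5}) with the critical points of the projection $\pi$ (resp.\ of the level function in part (b)), convert the hypothesis $c\neq 0$ into compactness of the relevant slab via Theorem~\ref{thm2.11}, extract a critical-point-free compact slab around $p_0$, and invoke the Morse comparison (\ref{eq:4.3}) to contradict the assumption that $p_0$ (resp.\ $c_0$) is a bifurcation point. Your write-up is somewhat more careful than the paper's at one step --- justifying the existence of the critical-value-free interval via closedness of the image of the compact critical set --- but this is a refinement of the same argument, not a different one.
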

\begin{proof}
(a) Suppose there is no point satisfying (\ref{eq:4.5}) on $M_c$. Since $c\neq 0$ by  Theorem~\ref{thm2.11}, the set $K$ of all points satisfying (\ref{eq:4.4}) where $p$ varies in $[0,p_0+\Delta]$ for some $\Delta>0$ is compact. Therefore, $\exists\,\Delta_1>0$ such that there is no point $x_0,\sigma_0,p$ in $M_c$ satisfying (\ref{eq:4.5}) and such that $p\in [p_0-\Delta_1,p_0+\Delta_1]$, $\Delta_1<\Delta$.
 
Since $K$ is compact, the set $L=\pi^{-1}[\,p_0-\Delta_1,p_0+\Delta_1\,]\subset K$ is also compact,  where $\pi:M_c\to \mathbb{R}$ is the projection on the $p$-coordinate. It is easy to see that if a point  on $M_c$ is critical, it has to satisfy (\ref{eq:4.4}). Thus our assumption shows that there are no critical points in $L$. Therefore, by (\ref{eq:4.3}) contours  (\ref{eq:4.4}) are all diffeomorphic as $p$ varies in $[p_0-\Delta_1,p_0+\Delta_1]$. This is a contradiction. Thus, there exists a point $(x_0,\sigma_0,p_0)$  satisfying (\ref{eq:4.4}). Case (b) is proved entirely similarly.
\end{proof}
Now that we have identified bifurcation points as critical points of a certain function on a manifold, we can apply results of the well-developed Morse Theory [13] which studies critical points of functions on manifolds. By definition, critical point of function $f$ is non-degenerate if and only if the matrix ${\partial^2\,\!f \over \partial x^1\,\!\partial x^2}$ is non-singular ($x^1,x^2$ are some local coordinate system around the critical point). Generically, critical points may be assumed non-degenerate and isolated (see [13]). Obviously, there are only a finite number of isolated points on a compact set. This observation together with Theorem~\ref{thm2.11} immediately implies the following result.
\begin{theorem}\label{thm4.2}
(a) Let $c\neq 0$ be fixed; then generically on the manifold $M_c$ defined by (\ref{eq:4.4}) there is  only a finite number of bifurcations points $p_j$ in each finite range $[0,p_0]$.\newline  
\smallskip

(b) Let $p$ be fixed and $c$ vary. Then the set of all bifurcation points with respect to $c$ is either finite or forms a sequence $c_j$ converging to zero.  
\end{theorem}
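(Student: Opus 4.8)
The plan is to combine the identification of bifurcation points with critical points established in Theorem~\ref{thm4.1} with the compactness results of Section~\ref{sec2} (Theorems~\ref{thm2.10} and \ref{thm2.11}) and the standard genericity input that a smooth real-valued function may be assumed to be Morse, so that its critical points are non-degenerate and hence isolated. The word \emph{generically} in the statement is what licenses this last assumption, and it is the only non-elementary ingredient; everything else is bookkeeping once compactness is in hand.

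For part (a), I would first invoke Theorem~\ref{thm4.1}(a): every bifurcation value $p_j$ lying in $[0,p_0]$ is the $p$-coordinate of a point of $M_c$ satisfying (\ref{eq:4.5}), and such points are exactly the critical points of the projection $\pi:M_c\to\mathbb{R}$ onto the $p$-axis (the two conditions in (\ref{eq:4.5}) say precisely that $dp$ is proportional to the differential of $\partial^k\Psi/\partial x^k$, i.e.\ that $d\pi$ vanishes on $T M_c$). The key observation is that, since $c\neq 0$, the portion of $M_c$ lying over $[0,p_0]$ is the preimage of $\{c\}$ under a continuous map, hence closed, and it is contained in the compact set $D_{|c|}^{k}$ furnished by Theorem~\ref{thm2.11}; therefore it is itself compact. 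Under the generic (Morse) assumption the critical points of $\pi$ are non-degenerate and so isolated, and an infinite set of isolated points cannot sit inside a compact set. Hence there are only finitely many such critical points, and a fortiori only finitely many bifurcation values $p_j$ in $[0,p_0]$.

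For part (b), fix $p$ and set $g(x,\sigma)=\partial^k\Psi(x,\sigma,p)/\partial x^k$. By Theorem~\ref{thm4.1}(b), a bifurcation value $c_0$ is attained at a point $(x_0,\sigma_0)$ satisfying (\ref{eq:4.5}); these are exactly the critical points of $g$ on the plane, with $c_0=g(x_0,\sigma_0)$ the associated critical value, so the bifurcation set is contained in the set of critical values of $g$. I would then run the compactness argument at each scale: for every $\epsilon>0$, slicing the compact set $D_\epsilon^{k}$ of Theorem~\ref{thm2.11} at the fixed value $p$ (a closed intersection, hence compact, with Theorem~\ref{thm2.10} giving the case $k=0$) shows that $\{(x,\sigma):|g(x,\sigma)|\ge\epsilon\}$ is compact. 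It therefore contains only finitely many of the generically isolated critical points, and so only finitely many critical values $c$ with $|c|\ge\epsilon$. Letting $\epsilon\to 0$, the set of bifurcation values has no accumulation point except possibly at $c=0$, so it is either finite or forms a sequence $c_j\to 0$.

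The main obstacle, and the whole reason Section~\ref{sec2} was developed, is the non-compactness of the manifolds $M_c$ (and of the $(x,\sigma)$-plane in part (b)): ordinary Morse theory yields finiteness of critical points only over compact domains, and here the manifolds run off to infinity as $p\to\infty$ or as $c\to 0$. The decay estimates of Section~\ref{sec2}, packaged as the compactness of $D_\epsilon^{k}$, restore compactness precisely once we stay away from the degenerate locus $c=0$ (and, in part (a), keep $p$ bounded), which is exactly why part (a) gives strict finiteness while part (b) can only exclude accumulation away from $0$. The one point requiring care is to state the genericity cleanly — that $\pi|_{M_c}$ in (a) and $g$ in (b) may be taken to be Morse functions — but this is the standard transversality/Sard statement cited from [13], after which the two arguments are routine.
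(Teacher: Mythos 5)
Your proposal is correct and takes essentially the same approach as the paper, which derives Theorem~\ref{thm4.2} in the paragraph preceding its statement by combining Theorem~\ref{thm4.1}'s identification of bifurcation points with critical points, the generic non-degeneracy (hence isolatedness) of critical points from Morse theory [13], and the compactness supplied by Theorem~\ref{thm2.11}, noting that a compact set contains only finitely many isolated points. Your write-up merely makes explicit the details the paper treats as immediate: the closedness of the relevant slices inside $D_{|c|}^{k}$ for part (a), and the $\epsilon$-exhaustion argument yielding ``finite or a sequence converging to zero'' in part (b).
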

Let us fix $c\neq 0$. Let $M_{c\,\!p_0}$ be defined as the set  
\begin{equation}\label{eq:4.6}
M_{c\,\!p_0}=\{\,x,\sigma,p\ :\ p \in [0,p_0],\ {\partial^k \Psi(x,\sigma,p)\over \partial x^k}=c,\,\}
\end{equation}
Then $M_{c\,\!p_0} = \pi^{-1}[0,p_0]$ and is compact. It is therefore a compact differentiable manifold  with a boundary (see [7]). Its boundary $\partial M_{c\,\!p_0}=\pi^{-1}(0)\cup \pi^{-1}(p_0)$ consists of a finite  number of closed curves, topologically equivalent to a circle $S^1$. Let $\pi^{-1}(0)=\cup \{\,A_j : j = 1,\dots,l\,\}$ and $\pi^{-1}(p_0)=\cup \{\,B_j : j = 1,\dots,m\,\}$ where $A_j$ and $B_j$ are closed curves. Let us select $B_j$ and define a coordinate system on $\mathbb{R}^3$ such that $(0,0,p_0)$ is inside $B_j$. Let us define a function $\phi:[0,\Delta)\to \mathbb{R}$, where $\Delta>0$ such that $\phi$ is $C^{\infty}$, $\phi(0)=1$, $\phi^{\prime}(x)>0$, $\phi(x)\to\infty$ as $x\to\Delta$. Then, define function $g(x,\sigma,p)$ for arbitrary $x$, $\sigma$ and $p<p_0+\Delta$ as follows:  
$$
g(x,\sigma,p)={\partial^k \Psi(x,\sigma,p)\over \partial x^k}\,\}\quad\ {\rm for\ }p\leq p_0
$$
and  
$$
g(x,\sigma,p)={\partial^k \Psi(\phi(p)\,\!x,\phi(p)\,\!\sigma,p_0)\over \partial x^k}\,\}\quad\ {\rm for\ }p_0\leq p\leq p_0+\Delta\, .
$$
\newline
Consider now the set $R_j$ defined as  
\begin{equation}\label{eq:4.7}
  R_j=\{\,x,\sigma,p\ :\ g(x,\sigma,p)=c,\ \ p_0\leq p\leq p_0+\Delta \,\}\, .
\end{equation}
Consider a component of $R_j$ containing $B_j$ and add a point with coordinates $(0,0,\Delta)$ (in our new coordinate system) to it. Denote this set by $KJ$. Then, $M_{c\,\!p_0}\cup K$ is a  manifold with a boundary having an extra critical (local maximum) point with respect to $\pi$ at $(0,0,\Delta)$. We can now define sets $K_j$ for each contour $B_j$ such that the function $\pi$ it has a local maximum on $K_j$, and similarly define sets $L_j$ for contours $A_j$ with one local minimum of it contained in each $L_j$. We need to select numbers $\Delta=\Delta(j)$ so that all sets $L_j$ and $K_j$ do not intersect. In this case, it is easy to see that the set  
\begin{equation}\label{eq:4.8}
  Q_{c\,\!p_0}=M_{c\,\!p_0}\cup\,\cup_{j=1}^mK_j\,\cup,\cup_{j=1}^lL_j
\end{equation}
is a compact manifold without a boundary. It is a well known fact [14] that every connected compact two-dimensional manifold in $\mathbb{R}^3$ is homeomorphic to a sphere with $n$ handles attached. A rigorous mathematical definition is given in [14], but intuitively attaching a handle means making two holes in a manifold and joining them with a tube. Sphere $\mathbb{S}^2$ is a sphere with zero handles, and a sphere with one handle is a torus. Obviously, the number of handles attached is a topological invariant. Thus, to every connected compact two-dimensional manifold $M$ in $\mathbb{R}^3$, there corresponds a unique integer $h(M)$ which completely defines the topology of this manifold and is equal to the number of handles attached. Thus, for every $p_0\geq 0 $  which is not a  bifurcation point, there is a finite sequence of integers $\mu_1(p_0),\dots,\mu_k(p_0)$, $k=k(p_0)$ such that $\mu_j(p_0)$ is equal to the number of handles attached to the $j$th component of the manifold $Q_{c\,\!p_0}$ defined by (\ref{eq:4.8}). It is easy to see that the topological type of $Q_{c\,\!p_0}$ can be defined without constructing functions $g(x,\sigma,p)$ as follows: on the set $M_{c\,\!p_0}$ define equivalence classes consisting of all elements of $B_j$ or $A_j$ or single points of $M_{c\,\!p_0}\setminus \partial M_{c\,\!p_0}$. Such a set of equivalence classes with factor topology will give us a compact manifold denoted by $N(p_0)$. The numbers of handles for components of $N(p_0)$ will be denoted by $\mu_1(p_0),\dots,\mu_k(p_0)$ where $k=k(p_0)$ is the number of components of $N(p_0)$.  

Integers $\mu_j(po)$ may experience a jump at a bifurcation point only. From Theorem~\ref{thm4.2} (a) it follows that the set of all bifurcation points $p \ge 0$ forms a sequence $p_j<p_{j+1}<P_{j+2} \cdots$. Therefore, we can form a double-indexed sequence $\mu_{j\,\!k}$ of integers where $\mu_{j\,\!k}$  is the number of handles of the $k$th component of the manifold $N(p)$ for any $p_j<p<p_{j+1}$. This definition is independent of $p$ as soon as $p$ is between the two consecutive bifurcation points $p_j$ and $p_{j+1}$. Indeed from Theorem~3.1 in [13], it follows that manifolds $N(p)$ are of the same homotopy type (see (14] for definition), if $p$ ranges between $p_j$ and $p_{j+1}$. Therefore, the numbers $\mu_{j\,\!k}(p)$ are the same for all $p\in(p_j,p_{j+1})$. If we vary $c$ we obtaine a triple-indexed set of integers $\mu_{j\,\!k\,\!l}$ where  $\mu_{j\,\!k\,\!l}$ is the same as $\mu_{j\,\!k}$ defined before, and where $c$ is taken between the consecutive bifurcation points $c_k$ and $c_{k+1}$. Clearly, integers $\mu_{j\,\!k\,\!l}$ are shape invariants and are the  same for functions $f(x)$ and of $a\,\!f(b\,\!x+c)$.  Thus, we get yet another way to characterize a function via discrete invariants. The tree-classification was considered earlier.  

Let us now prove that critical points of projection $\pi$ of a manifold $M_c$ defined by (\ref{eq:4.4}, on the $p$-coordinate coincide with bifurcation points. 

Let $f:M \to R$ be a smooth function and let $m$ be its critical point. Then $f$ is called non-degenerate if in some coordinate system, the matrix ${\partial^2\,\!f \over partial x_1\,\partial x_2}$ is non-degenerate at $m$. Non-degeneracy is a generic property (see [13]); thus, from the results in [13], it follows that the level sets of function $f$ on a compact manifold $M$ undergo topological change when passing through a non-degenerate critical point. Assuming that all critical points of function $\pi$ on the manifold $M_c$ are non-degenerate, we can combine this result with Theorem~\ref{thm4.1} to get the following : 
\begin{theorem}\label{thm4.3}
With assumptions and notations of Theorem~\ref{thm4.1}, the following are true.\newline
\medskip

(a) $p_0$ is a bifurcation point of manifold $M_c$ if and only if $\pi:M_c\to R$ has a critical  point $m\in M_c$ such that $\pi(m)=p_0$.\newline
\medskip

b) $c_0$ is a bifurcation point of the hyperplane $p=p_0$ if and only if there is a critical point of function ${\partial^k\,\!\Psi \over \partial x^k}(x,\sigma,p)$ which takes on the value $c_0$ on the hyperplane  $p=p_0$. 
\end{theorem}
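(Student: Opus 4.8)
The plan is to prove both biconditionals by first translating the analytic condition (\ref{eq:4.5}) into the statement that a suitable function has a critical point, and then running Morse theory on the compact manifolds supplied by Section~\ref{sec2}. The translation is the crux. Writing $F(x,\sigma,p)=\frac{\partial^k\Psi(x,\sigma,p)}{\partial x^k}$, the manifold $M_c$ is the level set $\{F=c\}$ and $\pi$ is the restriction to $M_c$ of the coordinate function $p$. At a regular point of $F$, a point $m\in M_c$ is critical for $\pi$ exactly when $dp$ annihilates $\ker dF=T_mM_c$, i.e. when $(0,0,1)$ is proportional to $\nabla F=(F_x,F_\sigma,F_p)$; this forces $F_x=F_\sigma=0$ with $F_p\neq0$. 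Since $F_x=\frac{\partial^{k+1}\Psi}{\partial x^{k+1}}$ and $F_\sigma=\frac{\partial^{k+1}\Psi}{\partial x^k\partial\sigma}$, the critical-point condition for $\pi$ is exactly (\ref{eq:4.5}), and the non-vanishing of $F_p$ is guaranteed by the standing generic assumption that the three first-order partials of $F$ never vanish together.

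For part (a) the forward implication is then immediate from Theorem~\ref{thm4.1}(a): a bifurcation value $p_0$ produces a point of $M_c$ satisfying (\ref{eq:4.5}), which by the translation is a critical point $m$ of $\pi$ with $\pi(m)=p_0$. For the converse I would argue as follows. Given a non-degenerate critical point $m$ of $\pi$ at level $p_0$, I invoke Theorem~\ref{thm2.11} (legitimate since $c\neq0$) to see that $\pi^{-1}[p_0-\delta,p_0+\delta]$ is compact for small $\delta$, so that $\pi$ is a proper Morse function on this slab and the handle-attachment results of [13] apply. These results give that the diffeomorphism type of the level sets $\pi^{-1}(p)$ is locally constant at regular values but must change as $p$ crosses the critical value $p_0$; since $\pi^{-1}(p)$ is exactly the contour (\ref{eq:4.4}) at parameter $p$, this is precisely a bifurcation at $p_0$.

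Part (b) follows the identical scheme with the parameters exchanged. Fixing $p=p_0$ and setting $G(x,\sigma)=\frac{\partial^k\Psi(x,\sigma,p_0)}{\partial x^k}$, the contours $\{G=c\}$ are the planar level sets of $G$; a critical point of $G$ solves $G_x=G_\sigma=0$, which is again (\ref{eq:4.5}) at $p=p_0$, with associated critical value $c_0=G(m)$. Using Theorem~\ref{thm2.11} once more to secure compactness of $\{|G|\ge\epsilon\}$ (valid because $c_0$ lies in an interval bounded away from zero), Morse theory yields that the topology of $\{G=c\}$ is constant between consecutive critical values and jumps exactly at them, which is the assertion of (b).

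The main obstacle is not the differential-topological bookkeeping but the non-compactness of $M_c$: classical Morse theory does not apply to the whole manifold, so the argument hinges entirely on first trapping the relevant level sets inside the compact sets furnished by Theorem~\ref{thm2.11} (and the compactness of $K=\pi^{-1}[0,p_0+\Delta]$ used in Theorem~\ref{thm4.1}). A secondary point needing care is genericity: one must retain the hypotheses that every critical point of $\pi$ is non-degenerate and that the three first-order partials of $F$ never vanish simultaneously, since a degenerate critical point could in principle leave the level-set topology unchanged and thereby spoil the cleanliness of the equivalence.
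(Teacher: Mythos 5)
Your proof is correct, and its skeleton coincides with the paper's: the ``only if'' halves are exactly Theorem~\ref{thm4.1}, the identification of critical points of $\pi$ with solutions of (\ref{eq:4.5}) (via the standing generic assumption that the three first-order partials of ${\partial^k\Psi/\partial x^k}$ never vanish simultaneously) is the same, and the converse rests on compactness from Theorem~\ref{thm2.11} plus Milnor's result that level sets change topologically when a non-degenerate critical value is crossed. Where you genuinely differ is the compactness device used in the converse. The paper picks $p_1>p_0$ with no critical values in $(p_0,p_1)$, forms the capped closed surface $Q_{c\,p_1}$ of (\ref{eq:4.8}), observes that $\pi^{-1}[0,p_1]\cap Q_{c\,p_1}=M_c\cap\pi^{-1}[0,p_1]$, and applies compact-manifold Morse theory from [13] there, transferring the bifurcation back to $M_c$; you instead apply Morse theory directly to the compact slab $\pi^{-1}[p_0-\delta,p_0+\delta]$, a compact manifold with boundary. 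Your route is legitimate --- Milnor's Theorems 3.1 and 3.2 in [13] only require $f^{-1}[a,b]$ to be compact, not compactness of the ambient manifold --- and it is more economical; the paper's detour through $Q_{c\,p_1}$ costs an extra construction but reuses machinery it needs anyway for the Betti-number inequalities (\ref{eq:4.11})--(\ref{eq:4.13}) of the subsequent theorem. Note that both arguments lean on the same cited-but-not-proved ingredient, namely that crossing a non-degenerate critical point actually alters the topological type of the level curves, and both need the window around $p_0$ (your $\delta$, the paper's $p_1-p_0$) small enough to exclude other critical values, which your genericity remark and Theorem~\ref{thm4.2} supply.
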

\begin{proof}
The "only if" part was proved in Theorem~\ref{thm4.1}. Let $p_0$ be a value of $\pi$ at some non-degenerate critical point $m$. Let $p_l>p_0$ be a number such that there are no critical points in $M_c$ taking on values in $(p_0,p_1)$.  If we define a manifold $Q_{c\,\!p_1}$ as in (\ref{eq:4.8})   (changing $p_0$ into $p_1$), then   
\begin{equation}
  \pi^{-1}[0,p_1]\cap Q_{c\,\!p_1}=M_c\cap \pi^{-1}[0,p_1]\, .
\end{equation}
Since $Q_{c\,\!p_1}$ is compact, we can apply the above mentioned result in [13] and conclude that $m$ is a bifurcation point of $\pi$ on $Q_{c\,\!p_1}$ and, therefore, of $\pi$ on $M_c$. This proves (a). The proof for (b) is entirely similar.  
\end{proof}
Thus, bifurcation points and critical points coincide. However, there is a theory relating the number of critical points on a manifold and a topology of this manifold. For compact manifolds, good references are [7] and [13]. For non-compact manifolds, the theory is much less developed and, in general, the results for compact manifolds cannot be generalized for non-compact ones. Some interesting results, however, were obtained in [9] and [15]. Before we proceed to the Morse theory applications, let us introduce some definitions. Let $m$ be a critical point of a function $f:M\to R$; then, the number of negative eigenvalues of the Hessian $\big({\partial^2\,\!f \over \partial x_1\,\partial x_2}\big)$ at $m$ will be called the index of $f$ at $m$. Obviously, the index of $f$  at $m$ is zero for a local minimum, and is equal to the dimension of $M$ at a local maximum. Let $X$, $A$ be a pair of topological spaces $A\subset X$. Denote by $H_j(X,A)$ the $j$th homology group of the pair with coefficients in the group of integers $\mathbb{Z}$ (see [14] for definitions). The rank of $H_j(X,A)$ is called Betti number $\beta_j(X,A)$. If $A$ is empty, we simply write $H_j(X)$ instead of $H_j(X,\emptyset)$. The basic results of Morse theory state that if $f$ is a function $f:M \to R$ on a compact manifold $M$ having no degenerate critical points, then: 
\begin{align}
  c_j &\ge \beta_j(M)\, ,\label{eq:4.9} \\
\sum_{j=1}^n(-1)^j c_j &= \sum_{j=1}^n(-1)^j\beta_j(M)\, ,\quad n=\dim M \, ,\label{eq:4.10}
\end{align}
where $c_j$ is the number of critical points of function $f$ having index $j$ (see [13] for the proof).  
\begin{theorem}
Let $p_0$ be a point on a $p$-axis which is not a bifurcation point. Let $M=M_{c\,\!p_0}=\pi^{-1}[0,p_0]$ be manifold with a boundary $\partial\,\! M =\pi^{-1}(0)\cup \pi^{-1}(p_0)$. ($M_{c\,\!p_0}$  is also defined by (\ref{eq:4.6})). Assume all critical points of $\pi$ on $M_{c\,\!p_0}$ are non-degenerate. Let $k$ be the number of closed contours in $\pi^{-1}(p_0)$ and let $l$ be the number of closed contours in $\pi^{-1}(0)$. Denote by $c_j$ $j=0,1,2$ the number of bifurcation points of index $j$ of the function $\pi$ on $M=M_{c\,\!p_0}$. Then, if the compact manifold $Q_{c\,\!p_0}$ is defined by (\ref{eq:4.8}), we have the following : 
\begin{align}
 & c_2\ge \beta_2(Q_{c\,\!p_0})-k, \qquad c_1\ge \beta_1(Q_{c\,\!p_0}), \qquad c_0\ge \beta_0(Q_{c\,\!p_0})-l \label{eq:4.11} \\
 & c_2-c_1+c_0 = \beta_2(Q_{c\,\!p_0})-\beta_1(Q_{c\,\!p_0})+\beta_0(Q_{c\,\!p_0})-l-k\, . \label{eq:4.12}
\end{align}
Also, let $d_0$ be the number of connected components of $M$ that do not intersect $\pi^{-1}(0)$ , andlet $d_2$ be the number of connected components of $M$ that do not intersect $\pi^{-1}(p_0)$.  Let $r_1$  be the number of components of $M$ and let $r_2$ be the number of components of $M$ that do not intersect $\partial\,\!M$. Then 
\begin{equation} \label{eq:4.13}
c_2 \ge d_2,\quad c_1 \ge \beta_1(M,\partial\,\!M)-k-l+r_1-r_2,\quad c_0 \ge d_0\, .    
\end{equation}
\end{theorem}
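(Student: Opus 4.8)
The plan is to reduce everything to classical Morse theory on the \emph{closed} surface $Q_{c\,p_0}$ of (\ref{eq:4.8}), for which the inequalities (\ref{eq:4.9}) and the equality (\ref{eq:4.10}) are available. First I would extend $\pi$ to a Morse function $\tilde\pi$ on $Q_{c\,p_0}$ using the capping data $K_j$ and $L_j$: by the construction of (\ref{eq:4.8}), each cap $K_j$ attached along a contour $B_j\subset\pi^{-1}(p_0)$ carries exactly one new nondegenerate critical point, a local maximum of index $2$, and each cap $L_j$ along $A_j\subset\pi^{-1}(0)$ carries exactly one new local minimum of index $0$, while no index-$1$ points are created. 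Writing $\tilde c_j$ for the number of index-$j$ critical points of $\tilde\pi$ on $Q_{c\,p_0}$, this bookkeeping gives $\tilde c_2=c_2+k$, $\tilde c_1=c_1$ and $\tilde c_0=c_0+l$. Since $Q_{c\,p_0}$ is compact without boundary and all critical points are nondegenerate by hypothesis, substituting these relations into (\ref{eq:4.9}) and (\ref{eq:4.10}) yields (\ref{eq:4.11}) and (\ref{eq:4.12}) at once.

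The outer inequalities of (\ref{eq:4.13}) I would obtain by a direct extremum argument rather than through $Q_{c\,p_0}$. Let $C$ be a component of $M=M_{c\,p_0}$ that does not meet $\pi^{-1}(0)$; then $\partial C\subset\pi^{-1}(p_0)$, so the boundary of $C$ lies at the top level $p_0=\max\pi$, and therefore $\pi$ attains its minimum on $C$ at an interior point, which by nondegeneracy is an index-$0$ critical point. Distinct components yield distinct such points, giving $c_0\ge d_0$; the symmetric argument for components not meeting $\pi^{-1}(p_0)$ gives $c_2\ge d_2$. These refine the outer inequalities of (\ref{eq:4.11}), since one boundary level may meet a single component in several contours, whence $d_0\ge r_1-l=\beta_0(Q_{c\,p_0})-l$ and $d_2\ge r_1-k=\beta_2(Q_{c\,p_0})-k$.

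For the middle inequality of (\ref{eq:4.13}) I would identify its right-hand side with $\beta_1(Q_{c\,p_0})$ and then invoke the already-established bound $c_1=\tilde c_1\ge\beta_1(Q_{c\,p_0})$ from (\ref{eq:4.11}). Because $M$ is a regular level set in $\mathbb{R}^3$ it is orientable, so Poincaré--Lefschetz duality [14] gives $\beta_1(M,\partial M)=\beta_1(M)$. Decomposing $M$ into its $r_1$ components, of which $r_2$ are closed, and noting that the $k+l$ boundary contours are distributed among the $r_1-r_2$ components with boundary, an Euler characteristic computation gives $\beta_1(M)=2G+(k+l)-(r_1-r_2)$, where $G$ is the total genus. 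Capping the $k+l$ boundary circles with disks changes neither the genus of a component nor the number of components, so $Q_{c\,p_0}$ is a disjoint union of $r_1$ closed orientable surfaces of total genus $G$, and $\beta_1(Q_{c\,p_0})=2G$. Hence $\beta_1(M,\partial M)-k-l+r_1-r_2=2G=\beta_1(Q_{c\,p_0})\le c_1$, which is the asserted bound.

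The main obstacle is the construction in the first paragraph: one must verify that the functions built from $\phi$ in (\ref{eq:4.8}) genuinely extend $\pi$ to a globally defined \emph{Morse} function on $Q_{c\,p_0}$ contributing exactly one nondegenerate extremum per cap and no spurious critical points, so that the identities for $\tilde c_j$ hold exactly; this is also where the compactness of the capped manifold, guaranteed via Theorem~\ref{thm2.11}, is essential. Once this is secured, the Morse (in)equalities are used as a black box from [13], and the remaining work — orientability, duality, and the genus count from the classification of surfaces [14] — is routine homological bookkeeping.
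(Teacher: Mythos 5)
Your proposal is correct, and for (\ref{eq:4.11}), (\ref{eq:4.12}) and the outer inequalities of (\ref{eq:4.13}) it follows the paper's own argument almost verbatim: you apply the Morse relations (\ref{eq:4.9})--(\ref{eq:4.10}) to the capped closed surface $Q_{c\,p_0}$, with the caps accounting for exactly $k$ extra index-$2$ and $l$ extra index-$0$ critical points (the paper phrases this as ``there are $k$ maxima and $l$ minima of $Q_{c\,p_0}$ which are not bifurcation points of $M$''), and you get $c_0\ge d_0$, $c_2\ge d_2$ by the same interior-extremum argument on components missing one boundary level. The genuine divergence is in the middle inequality of (\ref{eq:4.13}), where both proofs reduce to showing $\beta_1(M,\partial M)-k-l+r_1-r_2=\beta_1(Q_{c\,p_0})$ and then quoting $c_1\ge\beta_1(Q_{c\,p_0})$, but establish that identity differently. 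The paper argues homologically: it invokes the collapsing theorem from [16] (collapsing each component of $\partial M$ to a point induces an isomorphism $H_1(M,\partial M)\cong H_1(Q_{c\,p_0},F)$, with $F$ the set of $k+l$ cone points) and then reads off ranks from the exact homology sequence of the pair $(Q_{c\,p_0},F)$, using $\beta_0(F)=k+l$, $\beta_0(Q_{c\,p_0})=r_1$, $\beta_0(Q_{c\,p_0},F)=r_2$. You argue geometrically: orientability of $M$ (valid, since $M$ sits in a regular level set of a smooth function on $\mathbb{R}^3$), Poincar\'e--Lefschetz duality $\beta_1(M,\partial M)=\beta_1(M)$, and an Euler-characteristic and genus count via the classification of surfaces, giving $\beta_1(M)=2G+(k+l)-(r_1-r_2)$ and $\beta_1(Q_{c\,p_0})=2G$; the arithmetic checks out. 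Each route buys something: the paper's is coefficient-theoretic, needs no orientability and no dimension-specific classification, so it would survive for non-orientable or higher-dimensional level sets; yours is more self-contained, replacing the one non-standard ingredient (the collapsing lemma of [16]) with duality and the same classification-of-surfaces fact the paper already uses to define the invariants $\mu_{j\,k}$, at the cost of having to verify orientability. Your observation that $d_0\ge r_1-l$ and $d_2\ge r_1-k$, so that the outer part of (\ref{eq:4.13}) refines the outer part of (\ref{eq:4.11}), is a small addition not made explicit in the paper. Finally, you correctly flag the one technical point that both your proof and the paper's take on faith: that the caps $K_j$, $L_j$ entering (\ref{eq:4.8}) are smooth at their cone points and contribute exactly one nondegenerate extremum each and no index-$1$ points; the paper simply asserts this after constructing $g(x,\sigma,p)$, so your proof is no less rigorous than the original on this score.
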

\begin{proof}
According to (\ref{eq:4.9}) applied to $M=Q_{c\,\!p_0}$, the total number of critical points (= bifurcation points) of index $j$ is $c_j \ge \beta_j(Q_{c\,\!p_0})$. However, there are $k$ maxima of $Q_{c\,\!p_0}$ and $l$-minima which are not bifurcation points of $M$. Since minima have index $0$ and maxima index $2$, (\ref{eq:4.11}) follows. Equality (\ref{eq:4.12}) follows directly from (\ref{eq:4.10}). Let us prove (\ref{eq:4.13}). If a component of $M$ does not intersect $\pi^{-1}(0)$, then its local minimum (which is a  critical point) does not lie in $\pi^{-1}(0)$. Thus, it is a bifurcation point of index $0$ on $(0,p_0)$. This proves $c_0 \ge d_0$. Inequality $c_2 \ge d_2$ is proved similarly.  

Let us prove the middle inequality in (\ref{eq:4.13}). For this we need the following result (see [16]):  

Let $A\subset X$ and $X,A$ be a pair of compact spaces, $A$ being an $ANR$ (see (16] for  definition), in particular, $A$ is a manifold, and let $\mu:X\to \tilde{X}$ be a factor map of $X$ onto a  space $\tilde{X}$ obtained as a factor space from $X$ by collapsing every connected component of $A$ into a point. Let $\tilde{A} = \mu(A)$. Then $\mu$ induces an isomorphism  $\mu_*\,:\,H_j(X,A) \to H_j(\tilde{X},\tilde{A})$ of homology groups.

If we perform the operation of collapsing components with the pair $M, \partial\,\!M$, we will get $\mu(M)=Q_{c\,\!p_0}$ (this was mentioned earlier).  Then $\mu(\partial\,\!M)$ is a union of $k+l$ points. Thus, $H_1(M,\partial\,\!M)=H_1(Q_{c\,\!p_0},F)$ where the set $F$ consists of $k+l$ points.   From the exact sequence for homology of the pair $Q_{c\,\!p_0},F$,  
$$
H_1(F)=0\to H_1(Q_{c\,\!p_0}) \to H_1(Q_{c\,\!p_0},F) \to  H_0(F) \to  H_0(Q_{c\,\!p_0}) \to H_0(Q_{c\,\!p_0},F) \to 0
$$   
we get: 
$$
\beta_1(Q_{c\,\!p_0},F)=\beta_1(Q_{c\,\!p_0})+\beta_0(F)-\beta_0(Q_{c\,\!p_0})+\beta_0(Q_{c\,\!p_0},F)\, .
$$  
However, it is easy to see that 
$$
\beta_0(F)=k+l, \qquad \beta_0(Q_{c\,\!p_0})=r_1, \qquad \beta_0(Q_{c\,\!p_0},F)=r_2\, .
$$ 
Thus 
$$
 \beta_1(M,\partial\,\!M)=\beta_1(Q_{c\,\!p_0},F)=\beta_1(Q_{c\,\!p_0})+k+l-r_1+r_2;
$$
from (\ref{eq:4.13})
$$
 c_1\ge \beta_1(Q_{c\,\!p_0})=\beta_1(M,\partial\,\!M)-k-l+r_1-r_2\, .
$$
\end{proof}

Numbers $d_2$, $d_0$, $k$, $l$, $r_1$, $r_2$, and $\beta_1(M,\partial\,\!M)$ can be calculated easily by considering contours ${\partial^k\,\!\Psi(x,\sigma,p) \over partial x^k}=c$ for fixed $p$. We can show it in the following example. Note that as $p$ increases, critical points of index 2 (maxima) correspond to  shrinking contours which reduce to a single point at a bifurcation point; critical points of index  0 (minima) correspond to contours appearing out of a single point, and contours corresponding  to saddle bifurcation points (index 1) correspond to contours that join or split into two.
 Figures~3-6  show convolutions of a function with kernels $\rho^{p+1}\nabla_p^e(x\rho)$ for $p=2.5,\,3.0,5,\,5.5$.  Clearly, there are bifurcation points of index $1$ in $[2.5,3]$ and in $[5,5.5]$. 

\begin{center}
\includegraphics[width=15cm]{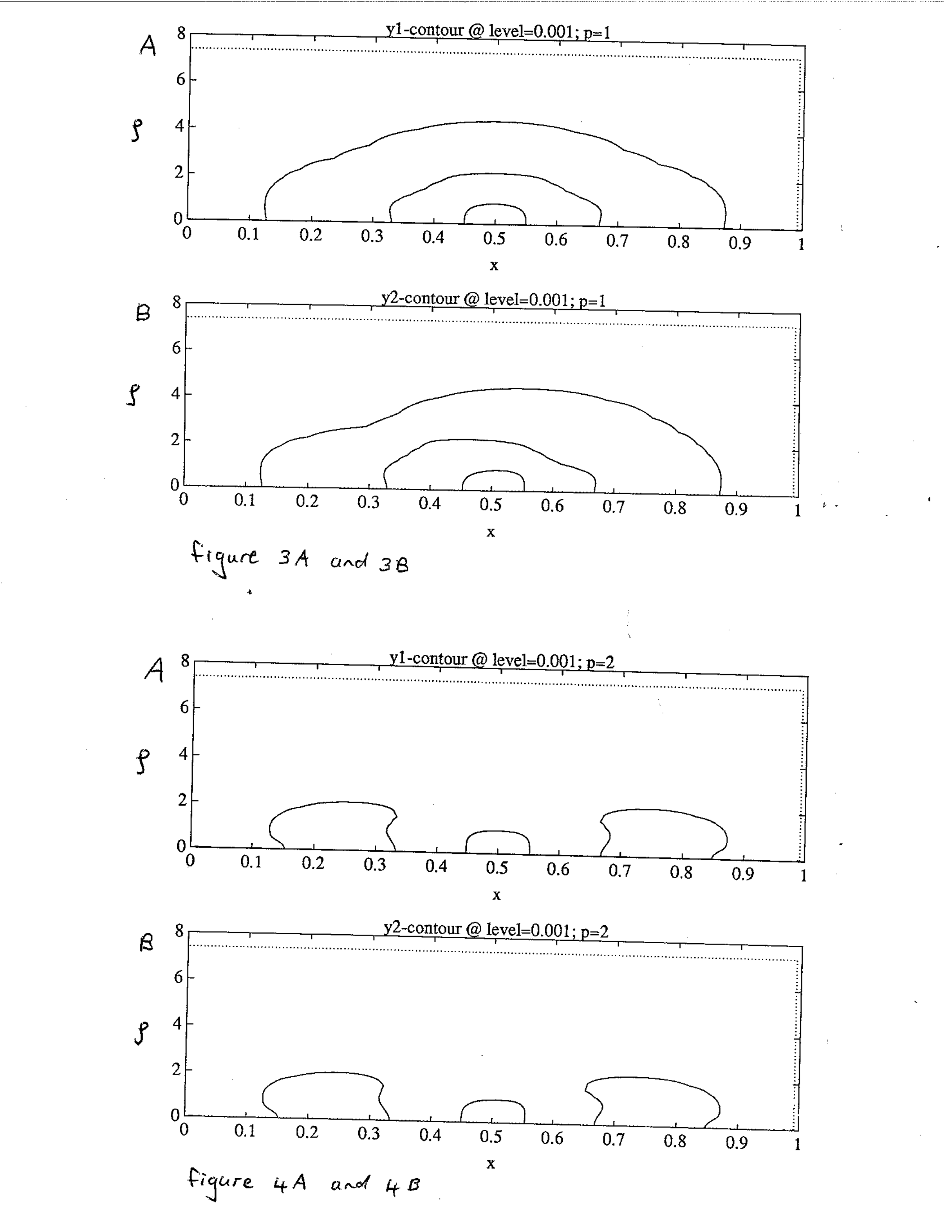}
\includegraphics[width=15cm]{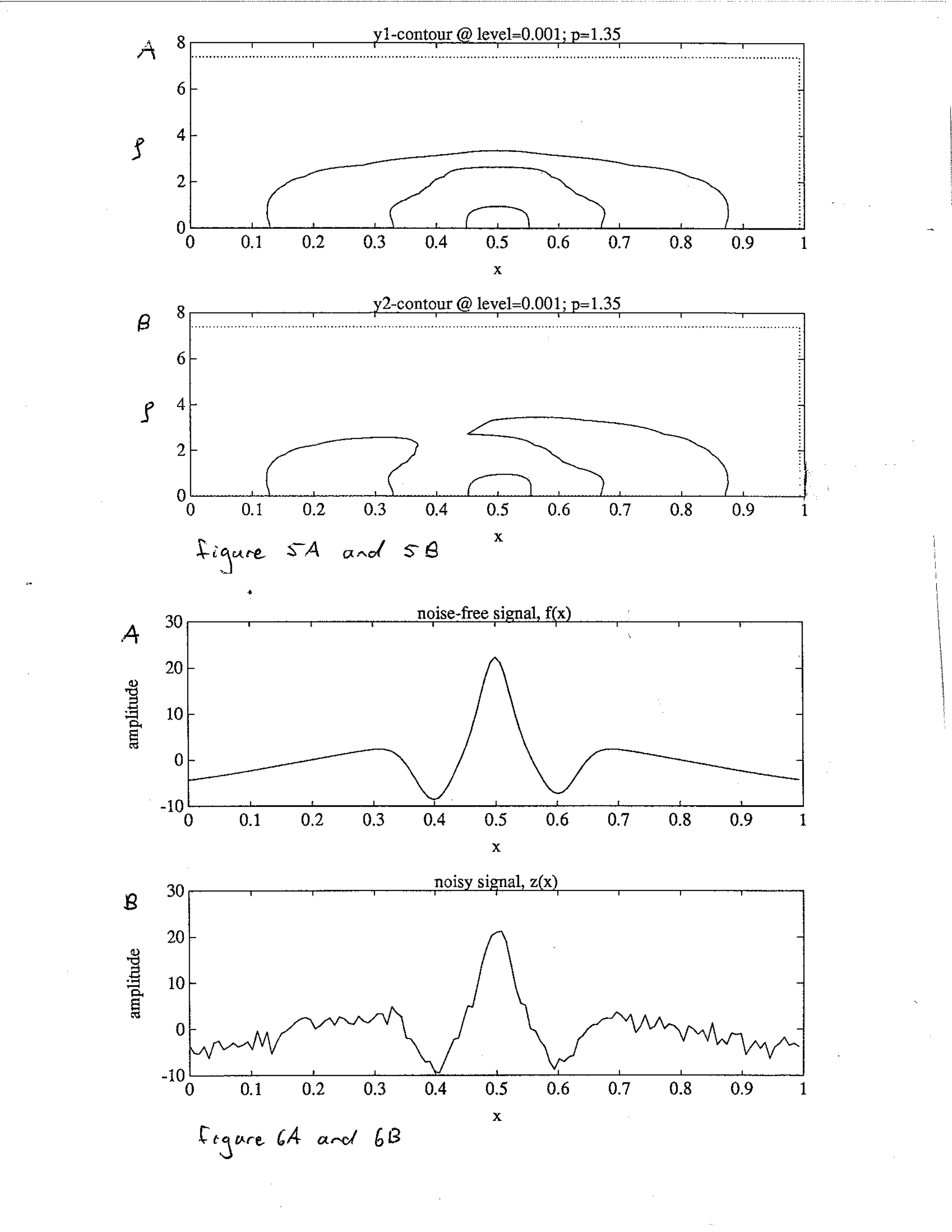}
\end{center}

 \section{Conclusion}
\label{sec:conclusion}
  \label{sec5}
We have studied a three-parametric family of kernels $\alpha \rho^{p+1}\nabla_p^e(x\rho)+\beta \rho^{p+1}\nabla_p^o(x\rho)$ which describes all possible monotonic kernels (see [3]). This gave us a way to construct two different types of tree invariants for a given signal. Various Fourier transform techniques were developed for quick computations.  

We also gave a rigorous treatment of the structure of level crossings of functions ${\partial^k\,\!\Psi(x,\sigma,p) \over partial x^k}$ which, in particular, generalizes many assumptions about Gaussian  convolutions (most of them have never been proved before). Thus, we have given a solid foundation to many papers for a particular case of Gaussian convolution $(\nabla_p^e*f)$ [1, 2, 11, 17, 18]. Another important application is assigning discrete values to characterize the signal developed in Section~\ref{sec4}, where we have introduced integers  $\mu_{j\,\!k}$ and $\mu_{j\,\!k\,\!l}$ to describe a signal. This is an alternative, geometric approach to signal classification. There are two other conceptual developments presented in Sections~\ref{sec2} and \ref{sec3} that we find very promising. First, the introduction of yet one more definition of fractional derivatives and, connected with it, a geometric method of analyzing the signal. Secondly, we can extend the developed theory to more general kernels along the lines of Theorem~\ref{thm3.4}. Using a differential geometric approach to signal processing seems to give interesting results. We have developed this approach in Section~\ref{sec4} where Morse theory is successfully applied to signal characterization. Many applications of the developed theory include signal characterization and analysis edge detection and underwater signal recognition.  
\section{References}
\label{sec:refs}

[1] Babaud, J., Witkin, A.P., Baudin, M., and Duda, R., "Uniqueness of the Gaussian Kernel for Space Filtering," IEEE Trans. on PAMI, PAMI-8(1), pp.26-33 (Jan. 1986), doi:10.1109/TPAMI.1986.4767749\newline
[2] Witkin, A.P., "Scale Space Filtering: A New Approach to Multiscale Description," in Image Understanding, ed. S. Ullman and W. Richards, Norwood, NJ: Ablex (1984), doi:10.1109/ICASSP.1984.1172729\newline
[3] Luxemburg, L.A. and Damelin, S. B., "A Multiple Parameter Linear Scale-Space for one dimensional Signal Classification." preprint \newline
[4] Shilov, G.E., "Mathematical Analysis, Functions of One Variable - Part III",  Moscow: Nauka (1970). (Russian).\newline
[5] Shilov, G.E., "Elementary Real and Complex Analysis, Cambridge": MIT Press (1973).\newline
[6] Oldham, K. and Spanier, T., "The Fractional Calculus", New York: Academic Press (1974).\newline
[7] Hirsch, M., "Differential Topology", New York: Springer-Verlag (1976).\newline
[8] Palis, T., Jr. and M, Wellington, "Geometric Theory of Dynamical Systems", New York: Springer-Verlag (1982).\newline
[9] Luxemburg, L.A., "Structural Stability Analysis and its Applications to Power Systems,"  Ph.D. Dissertation, Texas A\&M University, College Station, TX (Dec. 1987).\newline
[10] Coddington, E.A. and Levinson, N., "Theory of Ordinary Differential Equations", New York: McGraw-Hill (1955).\newline
[11] Yuille, A.L. and Poggio, T., "Scaling Theorems for Zero Crossings," IEEE Trans. Pattern  Anal. Machine Intell., 8(1), pp.15-25 (1986),\newline doi:10.1109/TPAMI.1986.4767748\newline
[12] Clark, T.T., "Singularity Theory and Phantom Edges in Scale Space," IEEE Trans. on  PAMI, 10(5), pp.720-727 (Sep. 1988).\newline
[13] Milnor, T., "Morse Theory," Annals of Mathematics, V51, Princeton University Press, Princeton, NJ (1963).\newline
[14] Spanier, E., "Algebraic Topology", New York: McGraw-Hill (1966).\newline
[15] Luxemburg, L.A. and Huang, G.M., "Generalized Morse Theory and its Applications to Control and Stability Analysis," CSSP Journal,\newline doi:10.1007/bf01183770\newline
[16] Dold, A., "Lectures on Algebraic Topology", New York: Springer-Verlag (1980).\newline
[17] Mokhtarian, F. and Mackworth, A., "Scale-Based Description and Recognition of Planar Curves and Two-Dimensional Shapes," IEEE Trans. on PAMI, PAMI-8(l), pp.34-43 (Jan. 1986).\newline
[18] Clark, T.T., "Singularities of Contrast Functions in Scale Space," Proc. 1st Int. Conf. Computer Vision, London, pp.491-496 (1987).\newline
[19] Thareja, S., Rohde, G.,  Martin, R.D.,  Medri, I. and Aldroubi, A;  "Signed Cumulative Distribution Transform for Parameter Estimation of 1-D Signals", arXiv:2207.07989 . \newline
[20] Aldroubi, A., Huang, L., Kornelson, K. and Krishtal, I .,"The Signed Cumulative Distribution Transform for 1-D Signal Analysis and Classification", 
arXiv:2106.02146. \newline
[21] Aldroubi, A., Gröchenig, K., Huang, L., Jaming, P.,  Krishtal, I. and  Romero, R.L., "Sampling the flow of a bandlimited function", arXiv:2004.14032.\newline
[22] Damelin, S.B. and Miller, W., "Mathematics of Signal Processing", Cambridge Texts in Applied Mathematics (No. 48) February 2012. \newline
[23] Damelin, S.B, Guo, H. and Miller, W., "Solutions to S. B. Damelin and W. Miller, Mathematics and Signal Processing", in Resources: Mathematics and Signal Processing, Cambridge Texts in Applied Mathematics (No. 48) February 2017. \newline
[24] Benedetto, J.J. and Dellomo, M, R.,"Reactive Sensing and Multiplicative Frame Super-resolution", arXiv:1903.05677. \newline
[25] Andrews, T.D., Benedetto, J.J. and Donatelli, J.J., "Frame multiplication theory and a vector-valued DFT and ambiguity function", arXiv:1706.05579.
[26] Candes, E.J. and Plan, Y., "A probabilistic and RIPless theory of compressed sensing", arXiv:1011.3854.
\end{document}